\documentclass[12pt]{article}
\usepackage{color}
\usepackage{bbm}
\usepackage{cite}
\usepackage{psfrag}
\usepackage{ifpdf}

\usepackage{enumerate}
%\usepackage[
 % bookmarks=true,
%  bookmarksopen=true,
%  breaklinks=true,
%  colorlinks=true,
 % linkcolor=blue,
%  anchorcolor=blue,
 % citecolor=blue,
 % filecolor=blue,
 % menucolor=blue,
 % %pagecolor=blue,
 % urlcolor=blue
 % ]{hyperref}
\usepackage{amsmath,amssymb,amsthm,mathrsfs}

\DeclareSymbolFont{largesymbol}{OMX}{yhex}{m}{n}
\DeclareMathAccent{\Widehat}{\mathord}{largesymbol}{"62}

\setlength{\paperwidth}{8.5in}
\setlength{\paperheight}{11.0in}
\setlength{\textwidth}{6.5in}
\setlength{\textheight}{9.0in}
\setlength{\oddsidemargin}{0in}
\setlength{\evensidemargin}{0in}
\setlength{\topmargin}{0in}
\setlength{\headsep}{0.0in}
\setlength{\headheight}{0.0in}
\setlength{\marginparwidth}{0in}
\setlength{\marginparsep}{0in}

\newtheorem{thm}{Theorem}[section]
\newtheorem{lemma}[thm]{Lemma}

\newtheorem{remark}[]{Remark}

\allowdisplaybreaks \allowdisplaybreaks[2]
\allowdisplaybreaks[4]
\newcounter{RomanNumber}

\numberwithin{equation}{section}

\begin{document}
\bibliographystyle{amsplain}
\title{Nondispersive solutions to the mass critical  half-wave equation in two dimensions\\ }
\author{Vladimir Georgiev{$^b$}\footnote{Corresponding author} ~Yuan Li{$^a$ $^b$}~
\renewcommand\thefootnote{}
\footnote{{E-mail addresses}: liyuan2014@lzu.edu.cn (Y. Li), georgiev@dm.unipi.it}
\setcounter{footnote}{0}
 \\\small ${^a}$ School of Mathematics and Statistics, Lanzhou
 University, Lanzhou, 730000, PR China
 \\\small ${^b}$Dipartimento di Matematica, Universit\`{a} di Pisa, Largo B. Pontecorvo 5, 56100 Pisa, Italy}
\date{ }
\maketitle
\begin{abstract}
We consider the half-wave equation with mass critical in two dimension
\begin{eqnarray*}
\begin{cases}
iu_t=Du-|u|u,\,\,\, \\
u(0,x)=u_0(x),
\end{cases}
\end{eqnarray*}
First, we prove the existence of a family of traveling solitary waves. We then show the existence of finite-time blowup solutions with minimal mass $\|u_0\|_2=\|Q\|_2$, where $Q$ is the ground state solution of equation $DQ+Q=Q^2$.

\noindent \textbf{Keywords:} Half-wave equation; Traveling waves solution; Minimal mass; Finite-time blowup solution

\noindent \textbf{Math. Subject Classification } 35Q55, 35B44, 35B40 
\end{abstract}

\section{Introduction}\label{section:1}
In this paper, we consider the half-wave equation in two dimension
\begin{equation}\label{equ-1-hf-2}
\begin{cases}
i\partial_tu=Du-|u|u,\\
u(t_0,x)=u_0(x),\ u:I\times\mathbb{R}\rightarrow\mathbb{C}.
\end{cases}
\end{equation}
Here, $I\subset\mathbb{R}$ is an interval containing the initial time $t_0\in\mathbb{R}$, and
\begin{align*}
\widehat{(Df)}(\xi)=|\xi|\hat{f}(\xi)
\end{align*}
denotes the first-order nonlocal fractional derivative. Let us mention that nonlinear half-wave equation have recently attracted some attention in the area of dispersive nonlinear PDE. The evolution problems  like \eqref{equ-1-hf-2} arise in various physical settings, which include equations range from turbulence phenomena \cite{majda1997,majda2001}, wave propagation \cite{Weinstein1987}, continuum limits of lattice system \cite{k-lenzmann2013} and models for gravitational collapse in astrophysics \cite{Ionescu2014,frank-lenzmann2013,eckhaus1983}. We also refer to \cite{Elgart2007,FJL2007,klein2014,cho2013} and the references therein for the background of the fractional Schr\"{o}dinger model in mathematics, numerics and physics.

Let us review some basic properties of this equation. The Cauchy problem
\eqref{equ-1-hf-2} is an infinite-dimensional Hamiltonian system, which has the following three conservation laws:
\begin{align}\label{mass-hf-2}
\text{Mass}\ \ M(u)&=\int_{\mathbb{R}^2}|u(t,x)|^2dx=M(u_0),\\
\label{momentum-hf-2}
\text{Momentum}\ \ P(u)&=\int_{\mathbb{R}^2}-i\nabla u(t,x)\bar{u}(t,x)dx=P(u_0),\\\label{energy-hf-2}
\text{Energy}\ \ E(\psi)&=\frac{1}{2}\int_{\mathbb{R}^2}\bar{u}(t,x)Du(t,x)dx
-\frac{1}{3}\int_{\mathbb{R}^2}|u(t,x)|^{3}dx=E(u_0).
\end{align}

The equation \eqref{equ-1-hf-2} also has the following symmetries:
\begin{itemize}
\item  Phase: if $u(t, x)$ is a solution, then for all $\theta\in\mathbb{R}$, $u(t, x)e^{i\theta}$ is also a solution.
\item Translation: if $u(t,x)$ is a solution, then for all $t_0\in\mathbb{R}$, $x_0\in\mathbb{R}^2$, $u(x-x_0,t-t_0)$ is also a solution.
\item Scaling: if $u(t,x)$ is a solution, then for all $\lambda>0$
\begin{align}\label{Intro-1-2}
u_{\lambda}(t,x)=\frac{1}{\lambda}u\left(\frac{t}{\lambda},\frac{x}{\lambda}\right)
\end{align}
is also a solution.
\end{itemize}

The Cauchy problem \eqref{equ-1-hf-2} is $L^2$-critical since the $L^2$-norm is invariant under the scaling rule \eqref{Intro-1-2}:
\begin{align}\notag
\|u_{\lambda}\|_2=\|u\|_2,\ \text{for all}\ \lambda>0.
\end{align}
From \cite{BGV2018} we known that the Cauchy problem \eqref{equ-1-hf-2} is locally well-posed in energy space $H^{1/2}(\mathbb{R}^2)$. More precisely, for all $u_0\in H^{1/2}(\mathbb{R}^2)$, there exists a unique solution $u(t)\in C([0,T);H^{1/2}(\mathbb{R}^2))$ to \eqref{equ-1-hf-2}. Moreover, we have the blowup alternative that if $u(t)$ is the unique solution with its maximal time of existence $t_0<T\leq\infty$, then
\begin{align}\label{Intro-2-2}
T<+\infty\  \text{implies}\ \lim_{t\rightarrow T^{-}}\|u(t)\|_{H^{1/2}}=+\infty.
\end{align}

A classical criterion of global-in-time existence for $H^{1/2}$ initial data is derived by using the Gagliardo-Nirenberg inequality with best constant
\begin{align}
\|u\|_{3}^3\leq C_{opt}\|D^{\frac{1}{2}}u\|_2^2\|u\|_2, \text{for}\ u\in H^{1/2}(\mathbb{R}^2),
\end{align}
where $C_{opt}=\frac{3}{2\|Q\|_2}$ and $Q$ is the unique ground state solution to
\begin{equation}\label{equ-s=1-hf-2}
D Q+Q=Q^2,\ Q(x)>0,\ Q(x)\in H^{1/2}(\mathbb{R}^2).
\end{equation}
Note that the existence of this equation follows from standard variational techniques, while the uniqueness of $Q$ follows from the result of Frank, Lenzmann and  Silvestre in \cite{frank-lenzmann2013,FrankLS2016}. A combination of  the mass and energy conservation  and the blowup criterion \eqref{Intro-2-2} implies that initial data $u_0\in H^{1/2}(\mathbb{R}^2)$ with
\begin{align}\notag
\|u_0\|_2<\|Q\|_2
\end{align}
generate global-in-time solution.

In this paper, we study  the  following two nondispersive phenomena connected with the focusing 2D half - wave equation \eqref{equ-1-hf-2}.

1) \textbf{Traveling solitary waves of the form}
\begin{align}\notag
u(t,x)=e^{it\mu}Q_c(x-vt)
\end{align}
with some $\mu\in\mathbb{R}$ and traveling velocity $v\in\mathbb{R}^2$.
Bellazzini, Georgiev, Lenzmann and Visciglia \cite{BGLV2019} proved that traveling solitary waves for speed $|v|>1$ does not exist and small data scattering failed in any space dimension. We also refer to \cite{BGV,FJL2007,KLR2013,HS2017,NR2018,Wang-Li-2020} and the references therein for the traveling solitary waves of the fractional Schr\"{o}dinger operator, square root Klein-Gordon operator $\sqrt{-\Delta+m^2}$ and other nonlinearities.
In what follows, let $Q\in H^{1/2}(\mathbb{R}^2)$ be the unique ground state solution of \eqref{equ-s=1-hf-2}.
We can obtain the existence of traveling solitary waves by using a variational approach and  adapting the proof in \cite{FJL2007}. For the half-wave equation \eqref{equ-1-hf-2}, we have the following result.

\begin{thm}\label{Theorem-1-hf-2}
For any $v\in\mathbb{R}^2$ with $0<|v|<1$, there exists a profile $Q_v\in H^{1/2}(\mathbb{R}^2)$ such that
\begin{align}\notag
u(t,x)=e^{it}Q_v(x-vt)
\end{align}
is a traveling solitary waves solution to \eqref{equ-1-hf-2}. Moreover, if $v = e \lambda$ with $|e|=1$ and $\lambda >0$, then the mass $\|Q_v\|_2$ is strictly decreasing with respect to $\lambda$, and for any $\lambda \in (0,1)$, the profile $Q_v$ has strictly subcritical mass:
\begin{align}
\|Q_v\|_2<\|Q\|_2.
\end{align}
Moreover,  the following  limits hold:
\begin{align}\notag
\begin{cases}
\|Q_v\|_2\rightarrow\|Q\|_2\ &\text{as}\ |v|=\lambda \rightarrow 0,\\
\|Q_v\|_2\rightarrow0\ &\text{as}\ |v|=\lambda \rightarrow 1.
\end{cases}
\end{align}
\end{thm}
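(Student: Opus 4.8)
We outline the argument. Substituting $u(t,x)=e^{it}Q_v(x-vt)$ into \eqref{equ-1-hf-2} and dividing by $e^{it}$ shows that $u$ solves \eqref{equ-1-hf-2} if and only if the profile satisfies
\begin{equation}\label{eq:profile-sketch}
(D+iv\cdot\nabla)Q_v+Q_v=|Q_v|Q_v\qquad\text{on }\mathbb{R}^2 .
\end{equation}
The plan is to produce $Q_v$ as a minimiser of the boosted Weinstein quotient
\[
J_v(u)=\frac{\bigl(\int_{\mathbb{R}^2}\bar u\,(D+iv\cdot\nabla)u\,dx\bigr)\,\|u\|_{L^2}}{\|u\|_{L^3}^{3}},\qquad u\in H^{1/2}(\mathbb{R}^2)\setminus\{0\}.
\]
In Fourier variables the numerator's quadratic form equals $\int(|\xi|-v\cdot\xi)|\hat u(\xi)|^{2}\,d\xi=\|D^{1/2}u\|_2^2-v\cdot P(u)$, which is real and, for $|v|<1$, obeys $(1-|v|)\|D^{1/2}u\|_2^2\le\int\bar u\,(D+iv\cdot\nabla)u\,dx\le(1+|v|)\|D^{1/2}u\|_2^2$; combined with the Gagliardo--Nirenberg inequality this gives $J_v(u)\ge(1-|v|)\tfrac23\|Q\|_2>0$, so $m(v):=\inf_u J_v(u)$ is finite and positive. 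Since $J_v$ is invariant under the two-parameter rescaling $u\mapsto\mu\,u(\cdot/\lambda)$, one runs the concentration--compactness scheme of \cite{FJL2007}, adapted to two dimensions and to the self-adjoint operator $D+iv\cdot\nabla$: a suitably normalised minimising sequence is, up to translations, precompact in $H^{1/2}$ (the required strict subadditivity holding), so a minimiser $u_v$ exists. Writing its Euler--Lagrange equation $\tfrac2A(D+iv\cdot\nabla)u_v+\tfrac1B u_v-\tfrac3C|u_v|u_v=0$, where $A,B,C$ denote the three integrals evaluated at $u_v$, and setting $Q_v:=\tfrac{3B}{C}\,u_v\!\bigl(\tfrac{2B}{A}\,\cdot\bigr)$ converts this into \eqref{eq:profile-sketch}; by the equivalence above, $e^{it}Q_v(x-vt)$ is then a traveling solitary wave, proving the existence statement.

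For the mass, combine two identities for the solution $Q_v$ of \eqref{eq:profile-sketch}: pairing with $\bar Q_v$ yields $\int\bar Q_v\,(D+iv\cdot\nabla)Q_v+\|Q_v\|_2^2=\|Q_v\|_3^3$, while the Pohozaev identity obtained from $\tfrac{d}{d\sigma}\big|_{\sigma=1}S_v\bigl(Q_v(\cdot/\sigma)\bigr)=0$, with $S_v(u)=\tfrac12\int\bar u\,(D+iv\cdot\nabla)u+\tfrac12\|u\|_2^2-\tfrac13\|u\|_3^3$, yields $\tfrac12\int\bar Q_v\,(D+iv\cdot\nabla)Q_v+\|Q_v\|_2^2=\tfrac23\|Q_v\|_3^3$. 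Solving these gives $\int\bar Q_v\,(D+iv\cdot\nabla)Q_v=2\|Q_v\|_2^2$ and $\|Q_v\|_3^3=3\|Q_v\|_2^2$, hence $J_v(Q_v)=\tfrac23\|Q_v\|_2$; since $Q_v$ is a rescaling of $u_v$ and $J_v$ is scale-invariant, $J_v(Q_v)=m(v)$, so
\[
\|Q_v\|_2=\tfrac32\,m(v).
\]
A change of variables $\xi\mapsto R\xi$ with $R\in SO(2)$ gives $J_{Rv}(u\circ R^{-1})=J_v(u)$, so $m(v)$ depends only on $\lambda=|v|$ and we write $m=m(\lambda)$. Moreover $J_0$ is the ordinary Weinstein quotient, so $m(0)=1/C_{opt}=\tfrac23\|Q\|_2$, attained at $Q$, in agreement with $\|Q_0\|_2=\|Q\|_2$.

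The monotonicity reduces to a convexity property of $m$. For each fixed $u$, the map $\lambda\mapsto J_{\lambda e}(u)=\bigl(\|D^{1/2}u\|_2^2-\lambda\,e\cdot P(u)\bigr)\|u\|_2/\|u\|_3^3$ is \emph{affine} in $\lambda$, so $m(\lambda)=\inf_u J_{\lambda e}(u)$ is concave on $(-1,1)$, and even by rotational invariance. Furthermore $m$ is affine on no subinterval: otherwise, since minimisers exist, some fixed $u$ would minimise $J_{\lambda e}$ for all $\lambda$ in a whole subinterval, and subtracting its Euler--Lagrange equations at two distinct parameters $\lambda_1\neq\lambda_2$ would force, in Fourier, $\bigl[\alpha|\xi|+\beta\,e\cdot\xi\bigr]\hat u(\xi)=0$ for a.e.\ $\xi$ with $(\alpha,\beta)\neq(0,0)$; as this bracket vanishes only on a Lebesgue-null set, $\hat u\equiv0$, a contradiction. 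Hence $m$ is strictly concave on $(-1,1)$ and, being even, strictly decreasing on $[0,1)$. Therefore $\|Q_v\|_2=\tfrac32 m(\lambda)$ is strictly decreasing in $\lambda$, and for $0<\lambda<1$ it is $<\tfrac32 m(0)=\|Q\|_2$.

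It remains to establish the two limits. As $\lambda\to0$, concavity makes $m$ continuous on $(-1,1)$, so $\|Q_v\|_2=\tfrac32 m(\lambda)\to\tfrac32 m(0)=\|Q\|_2$. As $\lambda\to1^-$, the coercivity bound gives $m(\lambda)\ge(1-\lambda)\tfrac23\|Q\|_2\to0$; for the matching upper bound take $e=(1,0)$ by rotation and test $J_{\lambda e}$ with $u_\delta(x):=\phi(x_1,\delta x_2)$, where $\phi\in\mathcal S(\mathbb{R}^2)\setminus\{0\}$ has $\hat\phi$ supported in $\{1\le\xi_1\le2,\ |\xi_2|\le1\}$. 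On $\operatorname{supp}\widehat{u_\delta}$ one has $|\xi|-\lambda\xi_1\le(1-\lambda)\xi_1+\xi_2^2/(2\xi_1)\lesssim(1-\lambda)+\delta^2$, while the anisotropic scaling gives $\|u_\delta\|_p^p=\delta^{-1}\|\phi\|_p^p$; hence $J_{\lambda e}(u_\delta)\lesssim(1-\lambda+\delta^2)\,\delta^{-1/2}$, and choosing $\delta=\sqrt{1-\lambda}$ yields $m(\lambda)\lesssim(1-\lambda)^{3/4}\to0$, so $\|Q_v\|_2\to0$. The steps that demand genuine work are the existence of $u_v$ --- where vanishing and dichotomy must be ruled out on all of $\mathbb{R}^2$ and the coercivity constant $1-|v|$ degenerates as $|v|\to1$ --- and the strict concavity of $m$ behind the \emph{strict} monotonicity; the scaling identities and test-function estimates are routine.
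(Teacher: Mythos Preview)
Your proof is correct and complete. The existence via the boosted Weinstein quotient and the relation $\|Q_v\|_2=\tfrac32 m(v)$ match the paper's approach. Where you genuinely diverge is in the \emph{monotonicity}: the paper first proves a sign condition on the momentum, $v\cdot\int\bar Q_v(i\nabla Q_v)\le0$, via a reflection argument, uses it to show $E_{v_2}(Q_{v_1})\le0$, and then invokes the sharp Gagliardo--Nirenberg inequality to get $\|Q_{v_1}\|_2\ge\|Q_{v_2}\|_2$; strictness is obtained by subtracting the two Euler--Lagrange equations. You bypass the momentum step entirely by observing that $\lambda\mapsto J_{\lambda e}(u)$ is affine, so $m(\lambda)=\inf_u J_{\lambda e}(u)$ is concave and even, hence non-increasing on $[0,1)$; strictness then follows by the same subtraction idea. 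Your route is cleaner and uses nothing beyond the variational structure, while the paper's route yields the momentum sign as an extra piece of information. For the limit $|v|\to1$ the paper simply cites the asymptotic $\|Q_v\|_2\sim(1-|v|)^2$ from \cite{BGLV2019}; your anisotropic test function $u_\delta$ gives a weaker but self-contained bound $m(\lambda)\lesssim(1-\lambda)^{3/4}$, which suffices for the statement. One cosmetic remark: the sentence ``the coercivity bound gives $m(\lambda)\ge(1-\lambda)\tfrac23\|Q\|_2\to0$'' is irrelevant to the conclusion (a lower bound tending to zero proves nothing); only your upper bound does the work there.
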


2) \textbf{Minimal mass blowup solutions.}
There is no general criterion for blowup  solutions in $\mathbb{R}^2$ for $L^2$-critical and $L^2$-supercritical half-wave equation. This is still an open problem ( see \cite{lenzmann-2016blowup} for more details).

For the classical $L^2$-critical nonlinear Schr\"{o}dinger equation, we have the Variance-Virial Laws, which can be expressed as

\begin{align}\notag
\frac{1}{2}\frac{d^2}{dt^2}\left(\int_{\mathbb{R}^2}|x|^2|u(t)|^2dx\right)
=2\frac{d}{dt}\left(\Im\int\bar{u}x\cdot\nabla udx\right)=8E(u_0).
\end{align}
Unlike the $L^2$-critical NLS, for the $L^2$-critical half-wave equation, we only have
\begin{align}\notag
\frac{d}{dt}\left(\Im\int\bar{u}x\cdot\nabla udx\right)=2E(u_0).
\end{align}
However, it seems difficult to represent  the term $\Im\int\bar{u}x\cdot\nabla udx$ as the derivative of some nonnegative one. Possible analogue  of the variance for half-wave equation was suggested in \cite{lenzmann-2016blowup}
\begin{align}\notag
V(u(t)):=\int\bar{u}(t)x\cdot(-\Delta)^{\frac{1}{2}}xu(t)dx=\|x(-\Delta)^{\frac{1}{4}}u(t)\|_2^2.
\end{align}
However, the identity
\begin{align}\notag
\frac{d}{dt}V(u(t))=8\Im\left(\int\bar{u}(t)x\cdot\nabla u(t)dx\right)
\end{align}
is true only if $u(t)$ is a solution of free half-wave equation $i\partial_tu=\sqrt{-\Delta}u$. This observation shows the difficulty to use viral type identity and prove a blow - up result in the mass critical case.

Another difficulty arises, when one tries to construct a minimal blow up solution, following the approach for NLS. This difficulty is connected with the lack of  pseudo-conformal symmetry, that is an essential advantage of NLS.  However, Krieger, Lenzmann and Rapha\"{e}l \cite{KLR2013} constructed a minimal mass blow-up solutions to the mass critical Half-wave equation in one dimension and they obtained that the blowup speed is
\begin{align}\notag
\|D^{1/2}u(t)\|_2\sim\frac{C(u_0)}{|t|}\ \text{as}\ t\rightarrow0^{-}.
\end{align}
But unlike the mass critical NLS \cite{Raphael2011-Jams}, the uniqueness for this minimal mass blow-up solution is still not known.
Our main result is the following.
\begin{thm}(Existence of minimal mass blowup elements)\label{Theorem-minimal-hf-2}
For all $(E_0,P_0)\in \mathbb{R}_+^*\times\mathbb{R}^2$, there exists $t^*<0$ independent of $E_0$, $P_0$ and a minimal mass solution $u\in C^0([t^*,0);H^{1/2}(\mathbb{R}^2))$ of equation \eqref{equ-1-hf-2} with
\begin{align}\notag
\|u\|_2=\|Q\|_2,\ E(u)=E_0,\ P(u)=P_0,
\end{align}
which blow up at time $T=0$. More precisely, it holds that
\begin{align}
u(t,x)-\frac{1}{\lambda(t)}Q\left(\frac{x-\alpha(t)}{\lambda(t)}\right)
e^{i\gamma(t)}\rightarrow0\ \text{in}\ L^2(\mathbb{R}^2)\ \text{as}\ t\rightarrow0^-,
\end{align}
where
\begin{align}\notag
\lambda(t)=\lambda^*t^2+\mathcal{O}(t^5),\ \alpha(t)=\mathcal{O}(t^3),\ \gamma(t)=\frac{1}{\lambda^*|t|}+\mathcal{O}(t),
\end{align}
with some constant $\lambda^*>0$, and the blowup speed is given by:
\begin{align}
\|D^{\frac{1}{2}}u(t)\|_2\sim\frac{C(u_0)}{|t|^2}\ \text{as}\ t\rightarrow0^{-},
\end{align}
where $C(u_0)>0$ is constant  depending only  on the initial data $u_0$.
\end{thm}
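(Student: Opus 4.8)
The plan is to adapt, to the two-dimensional quadratic equation \eqref{equ-1-hf-2}, the by-now-standard construction of minimal-mass blow-up solutions for $L^2$-critical problems developed by Rapha\"{e}l and collaborators \cite{Raphael2011-Jams} and, in the nonlocal setting, by Krieger--Lenzmann--Rapha\"{e}l \cite{KLR2013}. There are four ingredients: (i) a family of approximate self-similar profiles $Q_b$ obtained from a formal expansion in a small parameter $b>0$; (ii) a modulated decomposition of the flow around the $Q_b$ together with the associated system of modulation ODEs; (iii) a uniform backward-in-time bootstrap driven by a mixed energy/localized-virial Lyapunov functional whose coercivity is powered by the minimal-mass hypothesis $\|u\|_2=\|Q\|_2$ and the variational nondegeneracy of $Q$; and (iv) a compactness argument, based on the local theory of \cite{BGV2018}, that builds the solution on a fixed interval as a limit of solutions issued from times $t_n\uparrow 0$.

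\emph{Profiles and modulation.} Linearizing $u\mapsto|u|u$ at the positive ground state $Q$ produces the real operator $L_+=D+1-2Q$ and the imaginary operator $L_-=D+1-Q$, with $L_-Q=0$ and $L_+(\partial_jQ)=0$ by \eqref{equ-s=1-hf-2}. Using the homogeneity commutator $[D,\,y\cdot\nabla]=D$ together with $DQ=Q^2-Q$ one obtains the ``conformal-type'' identity $L_+(\Lambda Q)=-Q$, where $\Lambda:=1+y\cdot\nabla$; the crucial two-dimensional feature is that $\Lambda$ is precisely the $L^2$-scaling generator, so $\langle\Lambda Q,Q\rangle=0$, and hence $L_-\rho=\Lambda Q$ is solvable. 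Iterating this Fredholm cascade — inverting $L_\pm$ off their explicitly known, finite-dimensional kernels by means of the nondegeneracy of $Q$ from \cite{frank-lenzmann2013,FrankLS2016} — one builds, for every $N$, a profile $Q_b=Q+ib\rho+\sum_{2\le k\le N}b^kR_k$ solving
\begin{equation*}
D Q_b+Q_b-|Q_b|Q_b-ib\,\Lambda Q_b=\Psi_b,\qquad \|\Psi_b\|_{H^m}\lesssim b^{N+1},
\end{equation*}
with $E(Q_b)=c_*b^2+\mathcal{O}(b^3)$ (a Pohozaev identity gives $E(Q)=0$) and $\|Q_b\|_2^2-\|Q\|_2^2=\mathcal{O}(b^2)$. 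Since $D$ is nonlocal the correctors $\rho,R_k$ decay only polynomially, so this step must be carried out in weighted norms adapted to $D$. One then decomposes, for $t\uparrow 0$,
\begin{equation*}
u(t,x)=\frac{1}{\lambda(t)}\big(Q_{b(t)}+\varepsilon(t)\big)\!\left(\frac{x-\alpha(t)}{\lambda(t)}\right)e^{i\gamma(t)},
\end{equation*}
fixing $(\lambda,\alpha,\gamma,b)$, together with a momentum parameter $\beta$, by orthogonality of $\varepsilon$ against a fixed family of directions built from $Q$ and its symmetries; the implicit function theorem makes this decomposition unique and smooth while $b$ and $\|\varepsilon\|_{H^{1/2}}$ remain small. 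In the renormalized time $ds/dt=1/\lambda$ the modulation system takes the form
\begin{equation*}
|\lambda_s/\lambda+b|+|\gamma_s-1|+|\alpha_s/\lambda|+|b_s+\tfrac12 b^2|\ \lesssim\ \|\varepsilon\|_{H^{1/2}}+b^{N},
\end{equation*}
the coefficient $\tfrac12$ in the $b$-law being exactly what will force $\lambda\sim t^2$.

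\emph{Bootstrap and the blow-up law.} Expanding the conserved identity $\|u\|_2=\|Q\|_2$ through the decomposition yields the virial-type relation $2\Re\langle Q_b,\varepsilon\rangle=\mathcal{O}(b^2)+\mathcal{O}(\|\varepsilon\|_2^2)$, which pins down the otherwise dangerous $\langle Q,\varepsilon\rangle$ direction. Rewriting energy conservation as $\lambda E_0=E(Q_b)+(\text{cross term})+\tfrac12\langle L\varepsilon,\varepsilon\rangle+\cdots$ and invoking the coercivity of $\langle L_+\cdot,\cdot\rangle+\langle L_-\cdot,\cdot\rangle$ on the orthogonal complement of the known kernels (here is where the nondegeneracy of $Q$ and the minimal mass are used), one assembles a Lyapunov functional $\mathcal{F}\sim\|\varepsilon\|_{H^{1/2}}^2$ whose derivative in $s$ is controlled by a localized virial/commutator computation for the nonlocal operator $D$. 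Integrating $\mathcal{F}$ backward from $t_n$ propagates
\begin{equation*}
\|\varepsilon(s)\|_{H^{1/2}}\lesssim b(s)^{K},\qquad b(s)>0,\qquad b(s)\sim\frac{2}{s},
\end{equation*}
for a large power $K$. Feeding this back into the modulation system, $\lambda_s/\lambda=-b\sim-2/s$ gives $\lambda\sim s^{-2}$, and $ds/dt=1/\lambda\sim s^{2}$ gives $s\sim 1/|t|$ as $t\to0^-$, hence
\begin{equation*}
\lambda(t)=\lambda^*t^2+\mathcal{O}(t^5),\qquad \gamma(t)=\frac{1}{\lambda^*|t|}+\mathcal{O}(t),\qquad \alpha(t)=\mathcal{O}(t^3),
\end{equation*}
for some $\lambda^*>0$; the announced blow-up speed follows since $\|Du(t)\|_2\sim\lambda(t)^{-1}$, and since $\|\varepsilon(t)\|_{L^2}\to0$ and $Q_{b(t)}\to Q$ in $L^2$ the stated $L^2$-convergence holds.

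\emph{Existence, parameter matching, and the main obstacle.} For a sequence $t_n\uparrow 0$ one solves \eqref{equ-1-hf-2} backward from $u_n(t_n)=\lambda_n^{-1}Q_{b_n}(\cdot)e^{i\gamma_n}$ with $(\lambda_n,b_n,\gamma_n,\beta_n)$ chosen so that the bootstrap regime is entered; the uniform estimates then hold on a common interval $[t^*,0)$, and $\{u_n(t^*)\}$ is bounded in $H^{1/2}$, so a weak-$H^{1/2}$ / strong-$L^2_{\mathrm{loc}}$ limit $u(t^*)$ generates, by uniqueness of the Cauchy problem and the propagated bounds, the desired solution $u\in C^0([t^*,0);H^{1/2})$. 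The values $E(u)=E_0$ and $P(u)=P_0$ are matched for arbitrary $(E_0,P_0)$ by, respectively, the scaling normalization of the profile (through $\lambda E_0=c_*b^2(1+o(1))$ with $b\sim2\lambda^*|t|$) and the momentum modulation parameter $\beta(t)$ with $|\beta(t)|\lesssim\lambda(t)$ — equivalently, by composing the profile with a small traveling-wave deformation $Q_v$ from Theorem~\ref{Theorem-1-hf-2} — and one verifies that $t^*$ can be taken as in the statement. The main obstacle is precisely the one flagged after \eqref{energy-hf-2}: lacking pseudo-conformal symmetry, and having only the non-coercive, nonlocal virial quantity $\Im\int\bar u\,x\cdot\nabla u$ at one's disposal, neither the profiles $Q_b$ nor a \emph{monotone} Lyapunov functional come for free. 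Concretely, the crux is to close the bootstrap — to establish a Morawetz/virial-type monotonicity for $D$, controlling commutators such as $[D,x\cdot\nabla]$ and $[D,\chi(\cdot/A)]$ and an exterior repulsivity property, strong enough to dominate the error $\Psi_b$ and the slowly decaying correctors, while keeping $b(s)>0$ all the way down to the fixed time $t^*$.
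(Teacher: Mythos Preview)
Your proposal is correct and follows essentially the same strategy as the paper, namely the Krieger--Lenzmann--Rapha\"el construction \cite{KLR2013} adapted to two dimensions: iterated inversion of $L_\pm$ to build profiles, modulation with the law forcing $\lambda\sim t^2$, a mixed energy/localized-virial Lyapunov functional (treated via Balakrishnan's formula for $D$), backward bootstrap, and compactness. The only organizational difference is that the paper parameterizes the approximate profile by $\mathcal{P}=(a,b)\in\mathbb{R}\times\mathbb{R}^2$ from the start --- your scaling parameter $b$ is the paper's $a$ (with $a_s=-a^2/2$, $\lambda_s/\lambda=-a$) and your momentum parameter $\beta$ is the paper's $b$ (with $b_s=-ab$, $\alpha_s/\lambda=b$), built directly into $Q_{\mathcal{P}}$ rather than handled separately or via composition with a traveling wave $Q_v$.
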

This paper is organized as follows: in Section 2, we prove the Theorem \ref{Theorem-1-hf-2}; in Section 3, we construct the high order approximation $Q_{\mathcal{P}}$ solution of the renormalized equation; in Section 4, we decompose the solution and estimate the modulation parameters; in Section 5, we establish a refined energy/virial type estimate; in Section 6, we apply the energy estimate to establish a bootstrap argument that will be needed in the construction of minimal mass blowup solutions; in Section 7, we prove the Theorem \ref{Theorem-minimal-hf-2}; The Section
8 is Appendix.\\
\textbf{Notations}\\
- $(f,g)=\int \bar{f}g$ as the inner product on $L^2(\mathbb{R}^2)$.\\
- $\|\cdot\|_{L^p}$ denotes the $L^p(\mathbb{R}^2)$ norm for $p\geq 1$.\\
- $\widehat{f}$ denotes the Fourier transform of function $f$.\\
- We shall use $X\lesssim Y$ to denote that $X\leq CY$ holds, where the constant $C>0$ may change from line to line, but $C$ is allowed to depend on universally fixed quantities only.\\
- Likewise, we use $X\sim Y$ to denote that both $X\lesssim Y$ and $Y\lesssim X$ hold.
\section{Proof of Theorem \ref{Theorem-1-hf-2}}
In this section we prove Theorem \ref{Theorem-1-hf-2}, which establishes the existence and properties of traveling solitary waves for \eqref{equ-1-hf-2}.

Let $v\in\mathbb{R}^2$ with $|v|<1$ be given. By making the ansatz $u(t,x)=e^{it}Q_v(x-vt)$ for equation \eqref{equ-1-hf-2}, we find that the profile $Q_v\in H^{1/2}(\mathbb{R}^2)$ has to satisfy
\begin{equation}\label{equ-2elliptic-hf-N}
DQ_v+Q_v+i(v\cdot\nabla)Q_v=|Q_v|Q_v.
\end{equation}
Following an idea in \cite{FJL2007}, we obtain nontrivial solutions $Q_v\in H^{1/2}(\mathbb{R}^2)$ as optimizers for the interpolation inequality
\begin{align}\label{interpolation-inequality-hf-N}
\int|u|^{3}\leq C_v\left(\int\bar{u}Du+\bar{u}(iv\cdot\nabla u)\right)\left(\int|u|^2\right)^{1/2}.
\end{align}
Here $C_v>0$ denotes the optimal constant given by Weinstein functional
\begin{align}\label{min-hf-N}
\frac{1}{C_v}=\inf_{u\in H^{1/2}(\mathbb{R}^2)\backslash\{0\}}
\frac{\left(\int\bar{u}Du+\bar{u}(iv\cdot\nabla u)\right)\left(\int|u|^2\right)^{1/2}}{\int|u|^{3}}.
\end{align}
By Sobolev inequalities, we see that the infimum on the right is strictly positive (and hence $C_v<+\infty$). Furthermore, the fact that this infimum is, indeed, attained can be deduced from the concentration-compactness arguments, which is our case follow from a direct adaption of the proof given in \cite{FJL2007}. In particular, optimizers $Q_v\in H^{1/2}(\mathbb{R}^2)$ for \eqref{interpolation-inequality-hf-N} exist, and after a suitable rescaling $Q_v(x)\mapsto aQ(bx)$ with $a,b>0$ they are found to satisfy equation \eqref{equ-2elliptic-hf-N}. Following the terminology introduced in \cite{FJL2007}, we refer to optimizers such as $Q_v$ that
solve Equation \eqref{equ-2elliptic-hf-N} as boosted ground states (with velocity $v$) in what follows. In particular, the unboosted ground state
$Q_{v=0}(x)=Q(x)$ is the unique (modulo symmetries) radial ground state solving \eqref{equ-s=1-hf-2} above. Finally, we observe that
\begin{align}\label{best-constant-hf-N}
C_v=\frac{3}{2}\|Q_v\|_2^{-1},
\end{align}
which follows from the fact that $Q_v$ is an optimizer \eqref{interpolation-inequality-hf-N} and satisfy equation \eqref{equ-2elliptic-hf-N}. In particular, the relation \eqref{best-constant-hf-N} shows that two different boosted ground states $Q_v$ and $\tilde{Q}_v$ with the same velocity $v$ must satisfy $\|Q_v\|_2=\|\tilde{Q}_v\|_2$.

We may reformulate \eqref{best-constant-hf-N} as follows. Let the energy functional
\begin{align}
E_v(u)=\frac{1}{2}\int\bar{u}Du+\frac{1}{2}\int\bar{u}(iv\cdot\nabla u)-\frac{1}{3}\int|u|^{3},
\end{align}
then by the standard Pohozaev identity
\begin{align}\label{energy=0-hf-N}
E_v(Q_v)=0.
\end{align}
Using \eqref{best-constant-hf-N} and the sharp Gagliardo-Nirenberg interpolation inequality:
\begin{align}\label{GN-hf-N}
E_v(u)\geq\frac{1}{2}\left(\int\bar{u}Du+\bar{u}(iv\cdot\nabla u)\right)\left(1-\frac{\|u\|_2}{\|Q_v\|_2}\right).
\end{align}

From the previous paragraph we know that boosted ground states $Q_v$ satisfying equation \eqref{equ-2elliptic-hf-N} exist. Now we prove the behaviour of the boosted ground states.

$\mathbf{Step~1}$ Sign of the momentum. Let $0\leq|v|<1$. We claim:
\begin{align}\label{momentum=0-hf-N}
v\cdot\int\bar{Q}_v(i\nabla Q_v)\leq0.
\end{align}
Indeed, assume on the contrary that $v\cdot\int\bar{Q}_v(i\nabla Q_v)>0$ holds. We define the reflected function $\tilde{Q_v}(x):=Q_v(-x)$. Note that $\int|\tilde{Q}_v|^2=\int|Q_v|^2$  and $v\cdot\int\bar{\tilde{Q}}_v(i\nabla \tilde{Q}_v)<0$. Since the remaining terms in $E_v(u)$ are invariant with respect to space reflections, we find that $E_v(\tilde{Q}_v)<E_v(Q_v)=0$. But $\|\tilde{Q}_v\|_2=\|Q_v\|_2$ implies $E_v(\tilde{Q}_v)\geq0$ from \eqref{GN-hf-N}, a contradiction. We conclude that \eqref{momentum=0-hf-N} holds. In particular, by a suitable (possibly improper) rotation in $\mathbb{R}^2$, we can henceforth assume that
\begin{align}\notag
v=|v|e_1=(|v|,0)\in\mathbb{R}^2
\end{align}
points in (positive) $x_1-$direction. We can see
\begin{align}\label{existence1-hf-N}
\int\bar{Q}_v(i\partial_1Q_v)\leq0\ \ \text{for}\ \ 0<|v|<1.
\end{align}
For the case $v=0$, we recall that the fact from \cite{FrankLS2016} that (after translation and shift by a complex constant phase) the functions $Q_{v=0}(x)=Q(|x|)$ are radial. Hence, in this special case, we have
\begin{align}\label{intergal=0-hf-N}
\int\bar{Q}_{v=0}i\nabla Q_{v=0}=0
\end{align}
$\mathbf{Step~2}$ The mass is non-increasing. We claim the monotonicity:
\begin{align}\label{non-increasing-hf-N}
\|Q_{v_2}\|_2\leq\|Q_{v_1}\|_2\ \text{for}\ 0\leq |v_1|<|v_2|<1,
\end{align}
where $v_j=|v_j|e_1=(|v_j|,0)\in\mathbb{R}^2$, $j=1,2$.

Note that this implies, in particular, the subcritical mass property:
\begin{align}\notag
\|Q_v\|_2<\|Q\|_2\ \text{for}\ 0<|v|<1.
\end{align}
Indeed, let $Q_{v_1}$ and $Q_{v_2}$ be two boosted ground states satisfying \eqref{equ-2elliptic-hf-N} with $v=v_1$ and $v=v_2$, respectively. Since $E_{v_1}(Q_{v_1})=0$ by \eqref{energy=0-hf-N}, we find using \eqref{existence1-hf-N}, if $|v_1|>0$ and \eqref{momentum=0-hf-N} if $|v_1|=0$, that
\begin{align}\notag
E_{v_2}(Q_{v_1})=E_{v_1}(Q_{v_1})+(v_2-v_1)\cdot\int\bar{Q}_{v_1}(i\nabla Q_{v_1}),
\end{align}
since $v_2-v_1=(|v_2|-v_1)e_1=((|v_2|-|v_1|)e_1,0)$ and  $(v_2-v_1)\cdot\int\bar{Q}_{v_1}(i\nabla Q_{v_1})=(|v_2|-v_1)e_1\int\bar{Q}_{v_1}(i\partial_1Q_{v_1})\leq0$. Hence $E_{v_2}(Q_{v_1})\leq0$, which together with \eqref{GN-hf-N} implies $\|Q_{v_1}\|_2\geq\|Q_{v_2}\|_2$. In the case of equality, $\|Q_{v_1}\|_2=\|Q_{v_2}\|_2$, $Q_{v_1}$ attains the minimization problem \eqref{min-hf-N} with $v_2$. In particular, the function $Q_{v_1}$ satisfies the equation
\begin{equation}\notag
DQ_{v_1}+\lambda Q_{v_1}+v_2\cdot\nabla Q_{v_1}-|Q_{v_1}|Q_{v_1}=0
\end{equation}
with the Lagrange multiplier $\lambda\in\mathbb{R}$. On the other hand, by assumption, the boosted ground state $Q_{v_1}$ also satisfies equation \eqref{equ-2elliptic-hf-N} with $v=v_1$. By subtracting the equations satisfied by $Q_{v_1}$, we obtain that
\begin{align}\notag
(\lambda-1)Q_{v_1}+({v_2-v_1})\cdot\nabla Q_{v_1}=0.
\end{align}
Since $v_1\neq v_2$ by assumption and $Q_{v_1}\rightarrow0$ as $|x|\rightarrow\infty$, we deduce from this equation that $Q_{v_1}\equiv0$ holds, which is absurd.

$\mathbf{Step~3}$ Limits. We claim:
\begin{align}\notag
\begin{cases}
\|Q_{v}\|_2\rightarrow\|Q\|_2\ &\text{as}\ |v|\rightarrow0,\\
\|Q_v\|_2\rightarrow0\ &\text{as}\ |v|\rightarrow 1.
\end{cases}
\end{align}
To show this, we argue as follows. From $|\xi|-v\cdot\xi\geq(1-|v|)|\xi|$ for $\xi\in\mathbb{R}^2$ and Plancherel's identity, we deduce that $C_v\leq(1-|v|)^{-1}C_{v=0}$ for the optimal constants in \eqref{interpolation-inequality-hf-N}. From this simple bound and rescalling \eqref{best-constant-hf-N} and the monotonicity \eqref{non-increasing-hf-N}, we deduce that the bounds
\begin{align}\notag
\sqrt{1-|v|}\ \|Q\|_2\leq\|Q_v\|_2\leq\|Q\|_2.
\end{align}
Hence it follows that $\|Q_v\|_2\rightarrow\|Q\|_2$ as $v\rightarrow0$.

It remain to show $\|Q_v\|_2\rightarrow0$ as $|v|\rightarrow 1$. To prove this, from \cite{BGLV2019}, we know that for $|v|<1$, we have the estimate
\begin{align}\notag
C_v\sim(1-|v|)^{-1}\ \text{and}\ \|Q_v\|_2\sim(1-|v|)^2.
\end{align}
Hence, we can easily obtain our result.
\section{Approximate Blowup Profile}
This section is devoted to the construction of the approximate blowup profile.
For a sufficiently regular function $f:\mathbb{R}^2\rightarrow\mathbb{C}$, we define the generator of $L^2$ scaling given by
\begin{align}\notag
\Lambda f:=f+x\cdot\nabla f.
\end{align}
Note that the operator $\Lambda$ is skew-adjoint on $L^2(\mathbb{R}^2)$, that is, we have
\begin{align}\notag
(\Lambda f, g)=-(f,\Lambda g).
\end{align}
We write $\Lambda^kf$, with $k\in\mathbb{N}$, for the iterates of $\Lambda$ with the convention that $\Lambda^0f\equiv f$.

In some parts of this paper, it will be convenient to identity any complex-valued function $f:\mathbb{R}^2\rightarrow\mathbb{C}$ with the function $\mathbf{f}:\mathbb{R}^2\rightarrow\mathbb{R}^2$ by setting
\begin{align}\notag
\mathbf{f}={\left[ \begin{array}{c}
f_1\\
f_2
\end{array}
\right ]}={\left[ \begin{array}{c}
\Re f\\
\Im f
\end{array}
\right ]}.
\end{align}
We also define
$$\mathbf{f}\cdot\mathbf{g}=f_1g_1+f_2g_2.$$
Corresponding, we will identity the multiplication by $i$ in $\mathbb{C}$ with the multiplication by the real $2\times 2$-matrix defined as
\begin{align}\notag
J={\left[\begin{array}{cc}
0 & -1\\
1 & 0
\end{array}\right]}.
\end{align}

We start with a general observation: If $u=u(t,x)$ solves \eqref{equ-1-hf-2}, then we define the function $v=v(s,y)$ by setting
\begin{align}
u(t,x)=\frac{1}{\lambda(t)}v\left(s,\frac{x-\alpha(t)}{\lambda(t)}\right)
e^{i\gamma(t)},\ \ \frac{ds}{dt}=\frac{1}{\lambda(t)}.
\end{align}
It is easy to check that $v=v(s,y)$ with $y=\frac{x-\alpha}{\lambda}$ satisfies
\begin{equation}\label{equ-transform-hf-N}
i\partial_sv-Dv-v+|v|v=i\frac{\lambda_s}{\lambda}\Lambda v+i\frac{\alpha_s}{\lambda}\cdot\nabla v+\tilde{\gamma_s}v,
\end{equation}
where we set $\tilde{\gamma_s}=\gamma_s-1$. Here the operators $D$ and $\nabla$ are understood as $D=D_y$ and $\nabla=\nabla_y$, respectively. Following the slow modulated ansatz strategy developed in \cite{Raphael-2009-cpam,Raphael2011-Jams,KLR2013}, we freeze the modulation
\begin{align}\label{mod1}
-\frac{\lambda_s}{\lambda}=a,\ \ \frac{\alpha_s}{\lambda}=b.
\end{align}
And we look for an approximate solution of the form
\begin{align}\label{eq.der1}
v(s,y) = Q_{\mathcal{P}(s)}(y),\ \mathcal{P}(s)=(a(s),b(s)),
\end{align}
where
\begin{align}\notag
Q_{\mathcal{P}}(y)=Q(y)+ \left(\sum_{k \geq 1}a^{k}R_{k,0}(y) \right) + \left( \sum_{\ell\geq 1} a^{k}\sum_{j=1}^2b^{l}_j R_{k,\ell,j}(y)\right),
\end{align}
where $\mathcal{P} = (a,b) \in \mathbb{R} \times \mathbb{R}^2.$

We shall define ODE for
$a(s), b(s) $ of type
$$ a_s = P_1(a,b), b_s = P_2(a,b),$$
where $P_1,P_2$ are appropriate polynomials in $a,b$.

Using the heuristic asymptotic expansions $$\lambda (t) \sim t^2, \alpha(t) \sim t^3$$ from $\frac{ds}{dt}=\frac{1}{\lambda(t)}$ we see that $s= s_0-1/t$ goes to $\infty$ as $t \nearrow 0$ and $t=1/(s_0-s) \sim -1/s$ as $s \to + \infty.$  Moreover, the modulation relations \eqref{mod1} show that
$$ a(s) = -\frac{\lambda_s}{\lambda} \sim \frac{1}{s} , \ |b(s)| = \frac{|\alpha_s|}{\lambda} \sim \frac{1}{s^2} $$

These asymptotic expansions suggests to define $a(s), b(s)$ so that
\begin{align}\label{as11}
a_s=-\frac{a^2}{2},\ b_s=-ab.
\end{align}

Moreover the asymptotic expansions for $a(s),b(s)$ show that we can consider $\mathcal{P} =(a,b)$ close to the origin with norn
$$ \|\mathcal{P}\|^2 \sim a^2 + |b|.$$

The terms $R_{k,0}(y)$, $R_{k,\ell,j}(y)$ are decomposed in real and imaginary parts as follows
\begin{align}\notag
R_{k,0}(y)=T_{k,0}(y)+iS_{k,0}(y),\,R_{k,\ell,j}(y)=T_{k,\ell,j}(y)+iS_{k,\ell,j}(y).
\end{align}
We also use the notation
\begin{align}\notag
T_{k,\ell}=(T_{k,\ell,1},T_{k,\ell,2})\,\text{and}\,S_{k,\ell}=(S_{k,\ell,1},S_{k,\ell,2}).
\end{align}
We adjust the modulation equation for $(a(s),b(s))$ to ensure the solvability of the obtained system, and a specific algebra leads to the laws to leading order:
\begin{align}\notag
a_s=-\frac{a^2}{2},\ b_s=-ab.
\end{align}

From \eqref{eq.der1} we have
$$ \partial_s v = -\frac{a^2}{2}\partial_aQ_{\mathcal{P}}-a\sum_{j=1}^{2}b_j\partial_{b_j}Q_{\mathcal{P}}$$
Therefore, our purpose is  to construct a high order approximation $Q(y,a,b)=Q_{\mathcal{P}}$ that is solution to
\begin{align}\notag
-i\frac{a^2}{2}\partial_aQ_{\mathcal{P}}-ia\sum_{j=1}^{2}b_j\partial_{b_j}Q_{\mathcal{P}}-DQ_{\mathcal{P}}
-Q_{\mathcal{P}}+ia\Lambda Q_{\mathcal{P}}-i\sum_{j=1}^2b_j\partial_jQ_{\mathcal{P}}+
|Q_{\mathcal{P}}|Q_{\mathcal{P}}=-\Phi_{\mathcal{P}},
\end{align}
where $\mathcal{P}=(a,b)$ is close to $0$ and $\Phi_{\mathcal{P}}$ is some small term of order $\mathcal{O}(\|P\|^5)=\mathcal{O}(a^5+|b|^{5/2})$.

We have the following result about an approximate blowup profile $\mathbf{Q}_{\mathcal{P}}$, parameterized by $\mathcal{P}=(a,b)$, around the ground state $\mathbf{Q}=[Q,0]^{\top}$.
\begin{lemma}(Approximate Blowup Profile)\label{lemma-3app}
Let $\mathcal{P}=(a,b)$. There exists a smooth function $\mathbf{Q}_{\mathcal{P}}=\mathbf{Q}_{\mathcal{P}}(x)$ of the form
\begin{align}\notag
\mathbf{Q}_{\mathcal{P}}=&\mathbf{Q}+a\mathbf{R}_{1,0}+\sum_{j=1}^2b_j\mathbf{R}_{0,1,j}+a\sum_{j=1}^2b_j\mathbf{R}_{1,1,j}
+a^2\mathbf{R}_{2,0}\notag\\&+\sum_{j=1}^2b_j^2\mathbf{R}_{0,2,j}+a^3\mathbf{R}_{3,0}+a^2\sum_{j=1}^2b_j\mathbf{R}_{2,1,j}+a^4\mathbf{R}_{4,0}
\end{align}
that satisfies the equation
\begin{equation}\label{equ-3approxiamte}
-J\frac{1}{2}a^2\partial_a\mathbf{Q}_{\mathcal{P}}-Ja\sum_{j=1}^{2}b_j\partial_{b_j}\mathbf{Q}_{\mathcal{P}}
-D\mathbf{Q}_{\mathcal{P}}-\mathbf{Q}_{\mathcal{P}}+Ja\Lambda\mathbf{Q}_{\mathcal{P}}-J\sum_{j=1}^{2}b_j\partial_{b_j}\mathbf{Q}_{\mathcal{P}}
+|\mathbf{Q}_{\mathcal{P}}|\mathbf{Q}_{\mathcal{P}}
=-\mathbf{\Phi}_{\mathcal{P}}.
\end{equation}
Here the functions $\{\mathbf{R}_{k,l}\}_{0\leq k\leq3,0\leq l\leq 1}$ satisfy the following regularity and decay bounds:
\begin{align}\label{3app-regulairty}
\|\mathbf{R}_{k,l}\|_{H^m}+\|\Lambda\mathbf{R}_{k,l}\|_{H^m}
+\|\Lambda^2\mathbf{R}_{k,l}\|_{H^m}\lesssim1,\ &\text{for}\ m\in\mathbb{N},\\\label{3app-decay}
|\mathbf{R}_{k,l}|+|\Lambda\mathbf{R}_{k,l}|+|\Lambda^2\mathbf{R}_{k,l}|\lesssim
\langle x\rangle^{-3},\ &\text{for}\ x\in\mathbb{R}^2.
\end{align}
Moreover, the term on the right-hand side of \eqref{equ-3approxiamte} satisfies
\begin{align}\label{3app-regu-decay}
\|\mathbf{\Phi}_{\mathcal{P}}\|_{H^m}\lesssim\mathcal{O}(a^5+b^2\mathcal{P}),\ |\nabla\mathbf{\Phi}_{\mathcal{P}}|\lesssim\mathcal{O}(a^5+b^2\mathcal{P})\langle x\rangle^{-3},
\end{align}
for $m\in\mathbb{N}$ and $x\in\mathbb{R}^2$.
\end{lemma}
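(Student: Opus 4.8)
The plan is to construct $\mathbf{Q}_{\mathcal{P}}$ by a term-by-term expansion in powers of the modulation parameters $\mathcal{P}=(a,b)$, ordered by the weight $\|\mathcal{P}\|^2 \sim a^2+|b|$, so that each $\mathbf{R}_{k,\ell}$ is obtained by solving a linear elliptic equation involving the linearized operator around $\mathbf{Q}$. Substituting the ansatz into \eqref{equ-3approxiamte} and collecting terms of equal order in $(a,b)$, the leading nontrivial order gives equations of the schematic form $L_+ T_{k,\ell} = F_{k,\ell}$ and $L_- S_{k,\ell} = G_{k,\ell}$, where $L_+ = D + 1 - 2Q$ and $L_- = D + 1 - Q$ are the real and imaginary parts of the linearized operator $L = -J^{-1}(\mathcal{L})$ acting on $\mathbf{Q}=[Q,0]^\top$, and the right-hand sides $F_{k,\ell}, G_{k,\ell}$ are explicit polynomial expressions in the previously determined lower-order profiles (together with the modulation law contributions $-\tfrac12 a^2\partial_a$, $-ab\cdot\partial_b$, $a\Lambda$, $-b\cdot\nabla$). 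First I would recall the spectral structure: $L_- Q = 0$ and $L_+(\Lambda Q) = -Q$ (the latter being the infinitesimal form of $L^2$-scaling on \eqref{equ-s=1-hf-2}), plus $\partial_j Q \in \ker L_+$ from translation invariance and $iQ$, $ix_jQ$-type generalized kernel elements. The solvability of each linear equation then reduces to imposing orthogonality of the right-hand side to the (finite-dimensional) kernel of $L_\pm$, and it is precisely here that the modulation laws $a_s = -a^2/2$, $b_s = -ab$ are forced: they are chosen exactly so that the would-be obstructions cancel at each order.

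The key steps, in order, are: (i) set up the algebra — expand $|\mathbf{Q}_{\mathcal{P}}|\mathbf{Q}_{\mathcal{P}}$ to the required order (this is the nonlinearity $|z|z$, whose Taylor expansion around $Q$ produces terms like $2Q\,T + iQ\,S + \ldots$ and higher quadratic/cubic corrections), and collect the hierarchy of equations order by order through total weight $4$ in $\|\mathcal{P}\|$; (ii) at orders $a^1$ and $b_j^1$ solve for $\mathbf{R}_{1,0}$ and $\mathbf{R}_{0,1,j}$ using $L_+(\Lambda Q)=-Q$ and the translation kernel, respectively — the $a^1$ equation is the one that pins down the leading modulation relation between $a$ and the scaling generator; (iii) proceed inductively: at each new order the source term is a known function built from lower $\mathbf{R}$'s, and one checks the orthogonality conditions against $\ker L_+ = \mathrm{span}\{\partial_1 Q, \partial_2 Q\}$ and $\ker L_- = \mathrm{span}\{Q\}$, adjusting the polynomial modulation laws (or absorbing harmless multiples of kernel elements into $\mathbf{R}_{k,\ell}$) to guarantee solvability; (iv) establish the quantitative bounds \eqref{3app-regulairty}, \eqref{3app-decay}: $H^m$ regularity for all $m$ follows by elliptic regularity/bootstrapping since $L_\pm$ are lower-order perturbations of $D+1$ which is invertible on the orthogonal complement of the kernel, and the pointwise decay $\langle x\rangle^{-3}$ follows from the known decay $Q(x) \lesssim \langle x\rangle^{-3}$ for the half-wave ground state (from \cite{frank-lenzmann2013,FrankLS2016}) together with the fact that the nonlocal operator $(D+1)^{-1}$ and convolution with source terms of that decay preserve the $\langle x\rangle^{-3}$ rate; (v) finally, define $\mathbf{\Phi}_{\mathcal{P}}$ as the remainder — all terms of weight $\geq 5$ generated by truncating the expansion — and read off \eqref{3app-regu-decay} directly from the $H^m$ and decay bounds on the finitely many $\mathbf{R}_{k,\ell}$ entering those terms.

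The main obstacle will be \textbf{two intertwined points}. The first is the nonlocality of $D = \sqrt{-\Delta}$: unlike the NLS/half-wave-1D constructions in \cite{Raphael2011-Jams,KLR2013}, the operators $L_\pm$ are pseudodifferential, so one cannot use ODE methods for the profile equations and must instead rely on the Fourier-analytic/variational characterization of $(D+1)^{-1}$ and carefully track how it interacts with the weights $\langle x \rangle$ and with $\Lambda$ (note $[\Lambda, D] = -D$, which is what makes the $\Lambda^2$ bounds in \eqref{3app-regulairty}, \eqref{3app-decay} nontrivial); establishing that $(D+1)^{-1}$ maps $\langle x\rangle^{-3}$-decaying functions to $\langle x\rangle^{-3}$-decaying functions — rather than losing decay, as happens for generic fractional operators — is the technical heart. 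The second is verifying that the \emph{specific} choice of modulation laws $a_s=-a^2/2$, $b_s=-ab$, when plugged into the hierarchy, actually produces solvable equations at \emph{every} order up to weight $4$: this is the "specific algebra" alluded to in the text, and it requires checking a finite but nontrivial list of orthogonality identities, some of which (e.g. $(Q, \Lambda Q) \neq 0$ versus $(Q, Q) $ balancing) depend on structural facts about $Q$ specific to the two-dimensional half-wave setting.
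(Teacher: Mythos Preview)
Your outline matches the paper's proof: an order-by-order expansion solving $L\mathbf{R}_{k,\ell}=(\text{source built from lower orders})$ with solvability checked against $\ker L_+=\mathrm{span}\{\nabla Q\}$ and $\ker L_-=\mathrm{span}\{Q\}$, followed by the $H^m$ and $\langle x\rangle^{-3}$ bootstrap via the mapping properties of $(D+1)^{-1}$ (the paper packages this as Lemma~\ref{app-lemma-1}). One correction to your closing parenthetical: you write $(Q,\Lambda Q)\neq 0$, but in fact $(Q,\Lambda Q)=0$ by $L^2$-criticality (equivalently $\Lambda^*=-\Lambda$), and it is this \emph{vanishing} that gives the solvability $\Lambda Q\perp\ker L_-$ at order $\mathcal{O}(a)$; the genuinely nontrivial orthogonality checks occur at orders $\mathcal{O}(ab_j)$ and $\mathcal{O}(a^3)$, where the paper verifies the identities \eqref{3app-2} and \eqref{3app-relation} by commutator manipulations with $[\Lambda,\nabla]$, $[L_-,\Lambda]$, and $[\nabla,L_-]$.
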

\begin{proof}
We recall that the definition of the linear operator
\begin{equation}\notag
L={\left[ \begin{array}{cc}
L_+ & 0\\
0 & L_{-}
\end{array}
\right ]}
\end{equation}
acting on $L^2(\mathbb{R}^2,\mathbb{R}^{2})$, where $L_+$ and $L_-$ denote the unbounded operators acting on $L^2(\mathbb{R}^2,\ \mathbb{R}^{2})$ given by
\begin{align}\notag
L_+=D+1-2Q,\ L_-=D+1-Q.
\end{align}
From \cite{FJL2007} we have the key property that the kernel of $L$ is given by
\begin{equation}\notag
\ker L=span\left\{{\left[ \begin{array}{c}
(\partial_{x_1}Q, \partial_{x_2}Q)\\ (0, 0)
\end{array}
\right ]},{\left[ \begin{array}{c}
 0\\Q
\end{array}
\right ]}\right\}.
\end{equation}
Note that the bounded inverse $L^{-1}=diag\{L_+^{-1},L_-^{-1}\}$ exists on the orthogonal complement $\{\ker L\}^{-1}=\{(\partial_{x_1}Q,\partial_{x_2}Q)\}^{\bot}\bigoplus\{Q\}^{\bot}$.

$\mathbf{Step~1}$ Determining the functions $\mathbf{R}_{k,l}$.
We discuss our ansatz for $\mathbf{Q}_{\mathcal{P}}$ to solve \eqref{equ-3approxiamte} order by order. The proof of the regularity and decay bounds for the functions  $\mathbf{R}_{k,l}$ will be given further below.

$\mathbf{Order}$ $\mathcal{O}(1)$: Clearly, we have that
\begin{align}\notag
D\mathbf{Q}+\mathbf{Q}-|\mathbf{Q}|\mathbf{Q}=0.
\end{align}
Since $\mathbf{Q}=[Q,0]^{\top}$, with $Q=Q(|x|)>0$ being the ground state solution.

$\mathbf{Order}$ $\mathcal{O}(a)$: We note that
\begin{align}\notag
|\mathbf{Q}_{\mathcal{P}}|\mathbf{Q}_{\mathcal{P}}
=\mathbf{Q}^2+2a\mathbf{Q}\Re(\mathbf{R}_{1,0})+a\mathbf{Q}\Im(\mathbf{R}_{1,0})+\mathcal{O}(a^2).
\end{align}
Hence, we obtain the equation
\begin{equation}\notag
L\mathbf{R}_{1,0}=J\Lambda\mathbf{Q}.
\end{equation}
Note that $J\Lambda\mathbf{Q}=[0,\Lambda Q]^{\top}$ satisfies $J\Lambda\mathbf{Q}\bot\ker L$ due to the fact that $(\Lambda Q,Q)=0$, which can be easily seen by  using the $L^2$-criticality. Hence we can find a unique solution $\mathbf{R}_{1,0}\bot\ker L$ to the equation above. In what follows, we denote
\begin{align}\notag
\mathbf{R}_{1,0}=L^{-1}J\Lambda\mathbf{Q}={\left[\begin{array}{c}
0\\ L_{-}^{-1}\Lambda Q \end{array}\right]}.
\end{align}

$\mathbf{Order}$ $\mathcal{O}(b_j),\,j=1,2$: Similar to the above discussion. Here we need to solve
\begin{align}\notag
L\mathbf{R}_{0,1,j}=-J\partial_j\mathbf{Q},\,j=1,2.
\end{align}
We observe the orthogonality $J\partial_{x_j}\mathbf{Q}=[0,\partial_{x_j} Q]^{\top}\bot\ker L$, since $(\partial_{x_j}Q,Q)=0$ holds for $j=1,2$. Thus there is a unique solution $\mathbf{R}_{0,1,j}\perp\ker L$, which we denote as
\begin{align}\notag
\mathbf{R}_{0,1,j}=-L^{-1}J\partial_{x_j}\mathbf{Q}=
{\left[\begin{array}{c}
0 \\ -L_{-}^{-1}\partial_{x_j} Q
\end{array}\right]},\,j=1,2.
\end{align}

$\mathbf{Order}$ $\mathcal{O}(ab_j),\,j=1,2$:
\begin{align*}
&|\mathbf{Q}+a\mathbf{R}_{1,0}+b_j\mathbf{R}_{0,1,j}+ab_j\mathbf{R}_{1,1,j}|
(\mathbf{Q}+a\mathbf{R}_{1,0}+b_j\mathbf{R}_{0,1,j}+ab_j\mathbf{R}_{1,1,j})\\
=&Q\Big(1+\frac{ab_j(\mathbf{R}_{1,0}\mathbf{\bar{R}}_{0,1,j}
+\mathbf{\bar{R}}_{1,0}\mathbf{{R}}_{0,1,j})}{Q^2}+\frac{ab_j(\mathbf{R}_{1,1,j}+\mathbf{\bar{R}}_{1,1,j})}{Q}\Big)^{1/2}\\
&(\mathbf{Q}+a\mathbf{R}_{1,0}+b_j\mathbf{R}_{0,1,j}+ab_j\mathbf{R}_{1,1,j})\\
=&\mathbf{Q}Q+2ab_j\mathbf{Q}\Re{\mathbf{R}_{1,1,j}}+ab_j\mathbf{Q}\Im{\mathbf{R}_{1,1,j}}+
+ab_j(\mathbf{R}_{1,0}\mathbf{\bar{R}}_{0,1,j})Q\mathbf{Q}+\mathcal{O}(a^2b_j+ab_j^2).
\end{align*}
We find that $\mathbf{R}_{1,1,j}$ has to solve the equation
\begin{align}\label{3app-1}
L\mathbf{R}_{1,1,j}=-J\mathbf{R}_{0,1,j}+J\Lambda\mathbf{R}_{0,1,j}-J\partial_{x_j}\mathbf{R}_{1,0}
+(\mathbf{R}_{1,0}\mathbf{\bar{R}}_{0,1,j})Q\mathbf{Q},
\end{align}
where we use the fact that $\mathbf{Q}\cdot\mathbf{R}_{1,0}=\mathbf{Q}\cdot\mathbf{R}_{0,1,j}=0$. Now, we need to prove
\begin{align}\label{3app-3}
\text{the right-hand side of the above \eqref{3app-1} is}\ \bot \ker L.
\end{align}
Indeed, we note that
\begin{align}\label{3definition-S}
\mathbf{R}_{1,0}={\left[\begin{array}{c}
0\\S_{1,0}
\end{array}
\right]},\ &\text{with}\ L_{-}S_{1,0}=\Lambda Q,\\\label{3definition-G}
\mathbf{R}_{0,1,j}={\left[\begin{array}{c}
0\\S_{0,1,j}
\end{array}
\right]},\ &\text{with}\ L_{-}S_{0,1,j}=-\partial_{x_j} Q.
\end{align}
Let $S_{0,1}=(S_{0,1,1},S_{0,1,2})$ is a  vector.
Therefore the orthogonality condition \eqref{3app-3} is equivalent to
\begin{align}\label{3app-2}
(\nabla Q,S_{0,1})-(\nabla Q,\Lambda S_{0,1})+(\nabla Q,\nabla S_{1,0})+(\nabla Q,S_{1,0}S_{0,1})=0.
\end{align}
To see that this holds true, we argue as follows. Using the  commutator formula $[\Lambda,\nabla]=-\nabla$ and intergrating by part, we obtain
\begin{align*}
-(\nabla Q,\Lambda S_{0,1})&=(\Lambda\nabla Q,S_{0,1})=(\nabla\Lambda Q,S_{0,1})-(\nabla Q,S_{0,1})\notag\\
&=(\nabla L_{-}S_{1,0},S_{0,1})-(\nabla Q,S_{0,1}).
\end{align*}
Next, since $L_{-}$ is self-adjoint and the definition of $S_{0,1}$, for any function $F$, we have
\begin{align*}
(\nabla L_{-}F,S_{0,1})+(\nabla Q, \nabla F)&=-(L_{-}F,\nabla S_{0,1})-(L_{-}S_{0,1},\nabla F)=(F,[\nabla,L_{-}]S_{0,1})\\
&=-(F,\nabla QS_{0,1}),
\end{align*}
where we use the commutator formulate $[\nabla,L_{-}]=-\nabla Q $. By combining the above equalities, we conclude that \eqref{3app-2} holds. This shows that the \eqref{3app-3} holds, and hence there is a unique solution $\mathbf{R}_{1,1,j}\bot\ker L$ of the equation \eqref{3app-1}. Moreover, we note that
\begin{align}\notag
\mathbf{R}_{1,1,j}={\left[\begin{array}{c}T_{1,1,j}\\0\end{array}\right]},\,j=1,2.
\end{align}

$\mathbf{Order}$ $\mathcal{O}(a^2)$:
\begin{align*}
&|\mathbf{Q}+a\mathbf{R}_{1,0}+a^2\mathbf{R}_{2,0}|(\mathbf{Q}+a\mathbf{R}_{1,0}+a^2\mathbf{R}_{2,0})\\
=&Q\left(1+\frac{a^2(\mathbf{R}_{2,0}+\mathbf{\bar{R}}_{2,0})}{Q}+\frac{a^2|\mathbf{R}_{1,0}|^2}{Q^2}\right)^{1/2}
(\mathbf{Q}+a\mathbf{R}_{1,0}+a^2\mathbf{R}_{2,0})\\
=&\mathbf{Q}Q+2a^2\mathbf{Q}\Re(\mathbf{R}_{2,0})+a^2\mathbf{Q}\Im(\mathbf{R}_{2,0})
+\frac{a^2}{2}|\mathbf{R}_{1,0}|^2Q^{-1}\mathbf{Q}+\mathcal{O}(a^3).
\end{align*}
We find the equation
\begin{align}\notag
L\mathbf{R}_{2,0}=-\frac{1}{2}J\mathbf{R}_{1,0}+J\Lambda\mathbf{R}_{1,0}+
\frac{1}{2}|\mathbf{R}_{1,0}|^2Q^{-1}\mathbf{Q}.
\end{align}
Since $\mathbf{R}_{1,0}=[0,S_{1,0}]^{\top}$ with $L_{-}S_{1,0}=\Lambda Q$, the solvability condition for $\mathbf{R}_{2,0}$ reduces to
\begin{align}\notag
\frac{1}{2}(\nabla Q, S_{1,0})-(\nabla Q,\Lambda S_{1,0})+\frac{1}{2}(\nabla Q, S_{1,0}^2)=0.
\end{align}
However, this is true, since $S_{1,0}$ and $Q$ are the radial functions. Thus there exists a unique solution $\mathbf{R}_{2,0}\bot\ker L$, which is given by
\begin{align}\notag
\mathbf{R}_{2,0}={\left[\begin{array}{c}L_{+}^{-1}
\left(\frac{1}{2}S_{1,0}+\Lambda S_{1,0}-
\frac{1}{2}|S_{1,0}|^2\right)\\0\end{array}\right]}.
\end{align}

$\mathbf{Order}$ $\mathcal{O}(b_j^2)$:
\begin{align*}
&|\mathbf{Q}+b_j\mathbf{R}_{0,1,j}+b_j^2\mathbf{R}_{0,2,j}|
(\mathbf{Q}+b_j\mathbf{R}_{0,1,j}+b_j^2\mathbf{R}_{0,2,j})\\
=&Q\left(1+\frac{b_j^2(\mathbf{R}_{0,2,j}+\mathbf{\bar{R}}_{0,2,j})}{Q}
+\frac{b_j^2|\mathbf{R}_{0,1,j}|^2}{Q^2}\right)^{1/2}(\mathbf{Q}+b_j\mathbf{R}_{0,1,j}+b_j^2\mathbf{R}_{0,2,j})\\
=&\mathbf{Q}Q+2b_j^2\mathbf{Q}\Re\mathbf{R}_{0,2,j}+b_j^2\mathbf{Q}\Im\mathbf{R}_{0,2,j}
+\frac{1}{2}b_j^2|\mathbf{R}_{0,1,j}|^2Q^{-1}\mathbf{Q}+\mathcal{O}(b_j^3).
\end{align*}

We find the equation
\begin{align}\notag
L\mathbf{R}_{0,2}=-J\partial_{x_j} \mathbf{R}_{0,1,j}+\frac{1}{2}|\mathbf{R}_{0,1,j}|^2Q\mathbf{Q}.
\end{align}
Since $\mathbf{R}_{0,1,j}=[0,S_{0,1,j}]^{\top}$ with $L_{-}S_{0,1,j}=-\partial_{x_j} Q$ and $\mathbf{Q}=[Q,0]^{\top}$, the solvability condition reads
\begin{align}\notag
(\partial_{x_j} Q,\nabla S_{0,1,j})+\frac{1}{2}(\partial_{x_j} Q,S_{0,1,j}^2)=0.
\end{align}
Obviously, this is true, since $Q$ is radial function and $S_{0,1,j}$ is antisymmetry function. Hence there exists a unique solution $\mathbf{R}_{0,2,j}\bot\ker L$, and we have
\begin{align}\notag
\mathbf{R}_{0,2,j}={\left[\begin{array}{c}L_{+}^{-1}\left(\partial_{x_j} S_{0,1,j}+\frac{1}{2}|S_{0,1,j}|^2\right)\\0\end{array}\right]}.
\end{align}

$\mathbf{Order}$ $\mathcal{O}(a^3)$:
\begin{align*}
&|\mathbf{Q}+a\mathbf{R}_{1,0}+a^2\mathbf{R}_{2,0}+a^3\mathbf{R}_{3,0}|
(\mathbf{Q}+a\mathbf{R}_{1,0}+a^2\mathbf{R}_{2,0}+a^3\mathbf{R}_{3,0})\\
=&Q\Big(1+\frac{a(\mathbf{R}_{1,0}+\mathbf{\bar{R}}_{1,0})}{Q}+
\frac{a^2(\mathbf{R}_{2,0}+\mathbf{\bar{R}}_{2,0})}{Q}+
\frac{a^3(\mathbf{R}_{3,0}+\mathbf{\bar{R}}_{3,0})}{Q}
+\frac{a^3(\mathbf{R}_{1,0}\mathbf{\bar{R}}_{2,0}
+\mathbf{\bar{R}}_{1,0}\mathbf{R}_{2,0})}{Q^2}\\
&+\frac{a^2|\mathbf{R}_{1,0}|^2}{Q^2}\Big)^{1/2}
(\mathbf{Q}+a\mathbf{R}_{1,0}+a^2\mathbf{R}_{2,0}+a^3\mathbf{R}_{3,0})\\
=&Q\left(1+\frac{2a^2\mathbf{R}_{2,0}}{Q}+
\frac{a^3(\mathbf{R}_{3,0}+\mathbf{\bar{R}}_{3,0})}{Q}+\frac{a^2|\mathbf{R}_{1,0}|^2}{Q^2}\right)^{1/2}
(\mathbf{Q}+a\mathbf{R}_{1,0}+a^2\mathbf{R}_{2,0}+a^3\mathbf{R}_{3,0})\\
=&\mathbf{Q}Q+2a^3\mathbf{Q}\Re(\mathbf{R}_{3,0})+a^3\mathbf{Q}\Im(\mathbf{R}_{3,0})+a^3\Re \mathbf{R}_{2,0}\mathbf{R}_{1,0}+\frac{a^3}{2}Q^{-1}|\mathbf{R}_{1,0}|^2\mathbf{R}_{1,0}+\mathcal{O}(a^4).
\end{align*}

We notice that $\mathbf{R}_{1,0}\cdot\mathbf{R}_{2,0}=0$ and we obtain the equation
\begin{align}
L\mathbf{R}_{3,0}=-J\mathbf{R}_{2,0}+J\Lambda\mathbf{R}_{2,0}
+\Re\mathbf{R}_{2,0}\mathbf{R}_{1,0}+\frac{1}{2}Q^{-1}|\mathbf{R}_{1,0}|^2\mathbf{R}_{1,0}.
\end{align}
Note that the right side is of the form $[0,f]^{\top}$ with some nontrivial $f$. Hence the solvability condition for $\mathbf{R}_{3,0}$ is equivalent to
\begin{align}\label{3app-4}
-(Q,T_{2,0})+(Q,\Lambda T_{2,0})+(Q,T_{2,0}S_{1,0})+\frac{1}{2}(Q,Q^{-1}S_{1,0}^2S_{1,0})=0,
\end{align}
where the functions $S_{1,0}$ and $T_{2,0}$ satisfy
\begin{align}\label{3definition-T}
L_{-}S_{1,0}=\Lambda Q,\ \ L_{+}T_{2,0}=\frac{1}{2}S_{1,0}-\Lambda S_{1,0}+
\frac{1}{2}|S_{1,0}|^2.
\end{align}
To see that \eqref{3app-4} holds, we first note that
\begin{align*}
&\text{Left-hand side of \eqref{3app-4}}\\
&=-(Q,T_{2,0})-(\Lambda Q, T_{2,0})+(Q,T_{2,0}S_{1,0})+\frac{1}{2}(Q,Q^{-1}S_{1,0}^2S_{1,0})\\
&=-(Q,T_{2,0})-(L_{-}S_{1,0},T_{2,0})+(Q,T_{2,0}S_{1,0})+\frac{1}{2}(Q,Q^{-1}S_{1,0}^2S_{1,0})\\
&=-(Q,T_{2,0})-(L_{+}S_{1,0},T_{2,0})+\frac{1}{2}(Q,Q^{-1}S_{1,0}^2S_{1,0})\\
&=-(Q,T_{2,0})-\frac{1}{2}(S_{1,0},S_{1,0})+(S_{1,0},\Lambda S_{1,0})-\frac{1}{2}(S_{1,0},S_{1,0}^2)+\frac{1}{2}(Q,Q^{-1}S_{1,0}^2S_{1,0})\\
&=-(Q,T_{2,0})-\frac{1}{2}(S_{1,0},S_{1,0}),
\end{align*}
where in the last step we used that $(S_{1,0},\Lambda S_{1,0})=0$, since $\Lambda^*=-\Lambda$. Thus it remains to show that
\begin{align}\label{3app-relation}
-(Q,T_{2,0})=\frac{1}{2}(S_{1,0},S_{1,0}).
\end{align}
Indeed, by using $L_{+}\Lambda Q=-Q$ and the equations for $T_{2,0}$ and $S_{1,0}$ above, we deduce
\begin{align}\label{3app-5}
-(Q,T_{2,0})&=(\Lambda Q,\frac{1}{2}S_{1,0}-\Lambda S_{1,0}+
\frac{1}{2}|S_{1,0}|^2)\notag\\
&=\frac{1}{2}(L_{-}S_{1,0},S_{1,0})-(L_{-}S_{1,0},\Lambda S_{1,0})+\frac{1}{2}(\Lambda Q,S_{1,0}^2)\notag\\
&=\frac{1}{2}(S_{1,0},DS_{1,0})+\frac{1}{2}(S_{1,0},S_{1,0})
-\frac{1}{2}(S_{1,0},QS_{1,0})\notag\\
&-(L_{-}S_{1,0},\Lambda S_{1,0})+\frac{1}{2}(\Lambda Q,S_{1,0}^2).
\end{align}
Next, we apply the commutator formula $(L_{-}f,\Lambda f)=\frac{1}{2}(f,[L_{-},\Lambda]f)$, which shows that
\begin{align*}
(L_{-}S_{1,0},\Lambda S_{1,0})&=\frac{1}{2}(S_{1,0},[L_{-},\Lambda]S_{1,0})
=\frac{1}{2}(S_{1,0},[D,\Lambda]S_{1,0})-\frac{1}{2}(S_{1,0},[Q,\Lambda]S_{1,0})\\
&=\frac{1}{2}(S_{1,0},DS_{1,0})+\frac{1}{2}(S_{1,0},(x\cdot\nabla Q)S_{1,0}),
\end{align*}
where we use the commutator $[D,x\cdot\nabla]=D$ and $[Q,\Lambda]=-x\cdot\nabla Q$ holds. Furthermore, we have the pointwise identity
\begin{align*}
-x\cdot\nabla Q+\Lambda Q=Q.
\end{align*}
If now we insert the above two equalities into \eqref{3app-5}, we obtain the desired relation \eqref{3app-relation}, and thus the solvability condition \eqref{3app-4} holds. Note that $\mathbf{R}_{3,0}=[0,S_{3,0}]^{\top}$ with some radial function $S_{3,0}$.

$\mathbf{Order}$ $\mathcal{O}(a^4)$:
\begin{align*}
&|\mathbf{Q}+a\mathbf{R}_{1,0}+a^2\mathbf{R}_{2,0}+a^3\mathbf{R}_{3,0}+a^4\mathbf{R}_{4,0}|
(\mathbf{Q}+a\mathbf{R}_{1,0}+a^2\mathbf{R}_{2,0}+a^3\mathbf{R}_{3,0}+a^4\mathbf{R}_{4,0})\\
=&Q\Big(1+\frac{a(\mathbf{R}_{1,0}+\mathbf{\bar{R}}_{1,0})}{Q}+
\frac{a^2(\mathbf{R}_{2,0}+\mathbf{\bar{R}}_{2,0})}{Q}+
\frac{a^3(\mathbf{R}_{3,0}+\mathbf{\bar{R}}_{3,0})}{Q}+
\frac{a^4(\mathbf{R}_{4,0}+\mathbf{\bar{R}}_{4,0})}{Q}\\
&+\frac{a^4(\mathbf{R}_{1,0}\mathbf{\bar{R}}_{3,0}+\mathbf{\bar{R}}_{1,0}\mathbf{R}_{3,0})}{Q^2}
+\frac{a^3(\mathbf{R}_{1,0}\mathbf{\bar{R}}_{2,0}+\mathbf{\bar{R}}_{1,0}\mathbf{R}_{2,0})}{Q^2}
+\frac{a^2|\mathbf{R}_{1,0}|^2}{Q^2}+\frac{a^4|\mathbf{R}_{2,0}|^2}{Q^2}\Big)^{1/2}\\
&(\mathbf{Q}+a\mathbf{R}_{1,0}+a^2\mathbf{R}_{2,0}+a^3\mathbf{R}_{3,0}+a^4\mathbf{R}_{4,0})\\
=&Q\Big(1+\frac{1}{2}\Big(\frac{2a^2\Re\mathbf{R}_{2,0}}{Q}+
\frac{a^4(\mathbf{R}_{4,0}+\mathbf{\bar{R}}_{4,0})}{Q}
+\frac{2a^4\Re(\mathbf{R}_{1,0}\mathbf{\bar{R}}_{3,0})}{Q^2}
+\frac{a^2|\mathbf{R}_{1,0}|^2}{Q^2}+\frac{a^4|\mathbf{R}_{2,0}|^2}{Q^2}\Big)\\
&-\frac{1}{8}\Big(\frac{2a^2\Re\mathbf{R}_{2,0}}{Q}+\frac{a^2|\mathbf{R}_{1,0}|^2}{Q^2}\Big)^2\Big)
(\mathbf{Q}+a\mathbf{R}_{1,0}+a^2\mathbf{R}_{2,0}+a^3\mathbf{R}_{3,0}+a^4\mathbf{R}_{4,0})\\
=&\mathbf{Q}Q+2a^4\mathbf{Q}\Re\mathbf{R}_{4,0}+a^4\mathbf{Q}\Im\mathbf{R}_{4,0}
+a^4\Re(\mathbf{R}_{1,0}\mathbf{\bar{R}}_{3,0})Q^{-1}\mathbf{Q}
+\frac{a^4}{2}|\mathbf{R}_{2,0}|^2Q^{-1}\mathbf{Q}+
a^4|\mathbf{R}_{2,0}|^2Q^{-1}\mathbf{Q}\\
&-\frac{a^4}{2}|\mathbf{R}_{2,0}|^2Q^{-1}\mathbf{Q}
-\frac{a^4}{8}\frac{|\mathbf{R}_{1,0}|^4}{Q^3}\mathbf{Q}
-\frac{1}{2}\frac{a^4\Re\mathbf{R}_{2,0}|\mathbf{R}_{1,0}|^2}{Q^2}\mathbf{Q}+\mathcal{O}(a^5)\\
=&\mathbf{Q}Q+2a^4\mathbf{Q}\Re\mathbf{R}_{4,0}+a^4\mathbf{Q}\Im\mathbf{R}_{4,0}
+a^4\Re(\mathbf{R}_{1,0}\mathbf{\bar{R}}_{3,0})Q^{-1}\mathbf{Q}+a^4|\mathbf{R}_{2,0}|^2Q^{-1}\mathbf{Q}\\
&-\frac{a^4}{8}\frac{|\mathbf{R}_{1,0}|^4}{Q^3}\mathbf{Q}
-\frac{1}{2}\frac{a^4\Re\mathbf{R}_{2,0}|\mathbf{R}_{1,0}|^2}{Q^2}\mathbf{Q}+\mathcal{O}(a^5).
\end{align*}

We obtain the equation
\begin{align}\label{3app-6}
L\mathbf{R}_{4,0}=&-\frac{3}{2}J\mathbf{R}_{3,0}+J\Lambda\mathbf{R}_{3,0}+
+a^4\Re(\mathbf{R}_{1,0}\mathbf{\bar{R}}_{3,0})Q^{-1}\mathbf{Q}+|\mathbf{R}_{2,0}|^2Q^{-1}\mathbf{Q}\notag\\
&-\frac{1}{8}\frac{|\mathbf{R}_{1,0}|^4}{Q^3}\mathbf{Q}
-\frac{1}{2}\frac{\Re\mathbf{R}_{2,0}|\mathbf{R}_{1,0}|^2}{Q^2}\mathbf{Q},
\end{align}
where we use the fact that $\mathbf{R}_{1,0}\cdot\mathbf{Q}=\mathbf{R}_{3,0}\cdot\mathbf{Q}=0$. Moreover, we easily  see that
\begin{align}\notag
\text{Right-hand side of \eqref{3app-6}}\ \bot\ker L,
\end{align}
since the right-hand side of \eqref{3app-6} is radial and $(F,\nabla Q)=0$ for any radial function $F\in L^2({\mathbb{R}^2})$. Hence there is a unique solution $\mathbf{R}_{4,0}\bot\ker L$ of equation \eqref{3app-6}, and we have that $\mathbf{R}_{4,0}=[T_{4,0},0]^{\top}$ holds with some radial function $T_{4,0}$.

$\mathbf{Order}$ $\mathcal{O}(a^2b_j),\,j=1,2$:
\begin{align*}
&|\mathbf{Q}+a\mathbf{R}_{1,0}+b_j\mathbf{R}_{0,1,j}+ab_j\mathbf{R}_{1,1,j}+a^2\mathbf{R}_{2,0}+a^2b_j\mathbf{R}_{2,1,j}|\\
&(|\mathbf{Q}+a\mathbf{R}_{1,0}+b_j\mathbf{R}_{0,1,j}+ab_j\mathbf{R}_{1,1,j}+a^2\mathbf{R}_{2,0}+a^2b_j\mathbf{R}_{2,1,j})\\
=&Q\Big(1+\frac{a(\mathbf{R}_{1,0}+\mathbf{\bar{R}}_{1,0})}{Q}+
\frac{b_j(\mathbf{R}_{0,1,j}+\mathbf{\bar{R}}_{0,1,j})}{Q}+
\frac{ab_j(\mathbf{R}_{1,1,j}+\mathbf{\bar{R}}_{1,1,j})}{Q}+
\frac{a^2(\mathbf{R}_{2,0}+\mathbf{\bar{R}}_{2,0})}{Q}\\
&+\frac{a^2b_j(\mathbf{R}_{2,1,j}+\mathbf{\bar{R}}_{2,1,j})}{Q}
+\frac{a^2|\mathbf{R}_{1,0}|^2}{Q^2}
+\frac{ab_j(\mathbf{R}_{1,0}\mathbf{\bar{R}}_{0,1,j})+\mathbf{\bar{R}}_{1,0}\mathbf{R}_{0,1,j})}{Q^2}\\
&+\frac{a^2b_j(\mathbf{R}_{1,0}\mathbf{\bar{R}}_{1,1,j})+\mathbf{\bar{R}}_{1,0}\mathbf{R}_{1,1,j})}{Q^2}
+\frac{a^2b_j(\mathbf{R}_{2,0}\mathbf{\bar{R}}_{0,1,j})+\mathbf{\bar{R}}_{2,0}\mathbf{R}_{0,1,j})}{Q^2}\Big)^{1/2}\\
&(|\mathbf{Q}+a\mathbf{R}_{1,0}+b_j\mathbf{R}_{0,1,j}+ab_j\mathbf{R}_{1,1,j}+a^2\mathbf{R}_{2,0}+a^2b_j\mathbf{R}_{2,1,j})\\
=&Q\Big(1+
\frac{2ab_j\Re\mathbf{R}_{1,1,j}}{Q}+
\frac{2a^2\Re\mathbf{R}_{2,0}}{Q}
+\frac{a^2b_j(\mathbf{R}_{2,1,j}+\mathbf{\bar{R}}_{2,1,j})}{Q}
+\frac{a^2|\mathbf{R}_{1,0}|^2}{Q^2}
+\frac{2ab_j\Re(\mathbf{R}_{1,0}\mathbf{\bar{R}}_{0,1,j})}{Q^2}\Big)^{1/2}\\
&(|\mathbf{Q}+a\mathbf{R}_{1,0}+b_j\mathbf{R}_{0,1,j}+ab_j\mathbf{R}_{1,1,j}+a^2\mathbf{R}_{2,0}+a^2b_j\mathbf{R}_{2,1,j})\\
=&Q\Big(1+\frac{1}{2}\Big(
\frac{2ab_j\Re\mathbf{R}_{1,1,j}}{Q}+
\frac{2a^2\Re\mathbf{R}_{2,0}}{Q}
+\frac{a^2b_j(\mathbf{R}_{2,1,j}+\mathbf{\bar{R}}_{2,1,j})}{Q}
+\frac{a^2|\mathbf{R}_{1,0}|^2}{Q^2}
+\frac{2ab_j\Re(\mathbf{R}_{1,0}\mathbf{\bar{R}}_{0,1,j})}{Q^2}\Big)\Big)\\
&(|\mathbf{Q}+a\mathbf{R}_{1,0}+b_j\mathbf{R}_{0,1,j}+ab_j\mathbf{R}_{1,1,j}
+a^2\mathbf{R}_{2,0}+a^2b_j\mathbf{R}_{2,1,j})+\mathcal{O}(a^2b^2+a^3b+ a^3+b^2)\\
=&\mathbf{Q}Q+2a^2b_j\mathbf{Q}\Re\mathbf{R}_{2,1,j}+a^2b_j\mathbf{Q}\Im\mathbf{R}_{2,1,j}
+a^2b_j\Re\mathbf{R}_{1,1,j}\mathbf{R}_{1,0}\\
&+a^2b_j(\mathbf{R}_{1,0}\cdot\mathbf{\bar{R}}_{0,1,j})\mathbf{R}_{1,0}
+a^2b_j\frac{|\mathbf{R}_{1,0}|^2\mathbf{R}_{0,1,j}}{Q}+\mathcal{O}(a^2b^2+a^3b+ a^3+b^2).
\end{align*}
Note that $\mathbf{R}_{1,0}\cdot\mathbf{Q}=\mathbf{R}_{1,1,j}\cdot\mathbf{R}_{1,0}
=\mathbf{R}_{0,1,j}\cdot\mathbf{R}_{2,0}=0$. We find the equation
\begin{align}\label{3app-7}
L\mathbf{R}_{2,1,j}=&\frac{3}{2}J\mathbf{R}_{1,1,j}+J\Lambda\mathbf{R}_{1,1,j}
-J\partial_j\mathbf{R}_{2,0}+\Re\mathbf{R}_{1,1,j}\mathbf{R}_{1,0}\notag\\
&+(\mathbf{R}_{1,0}\cdot\mathbf{\bar{R}}_{0,1,j})\mathbf{R}_{1,0}
+\frac{|\mathbf{R}_{1,0}|^2\mathbf{R}_{0,1,j}}{Q}.
\end{align}
Using the symmetries of the previously constructed functions, we can check that
\begin{align}\notag
\text{Right-hand side of \eqref{3app-7}} \bot\ker L,
\end{align}
Since $(g,Q)=0$ for any antisymmetry function $g\in L^{2}(\mathbb{R}^2)$. Thus there is a unique solution $\mathbf{R}_{2,1,j}\bot\ker L$ of the equation \eqref{3app-7}, and we set  that $\mathbf{R}_{2,1,j}=[0,S_{2,1,j}]^{\top}$ with some radial function $S_{2,1,j}$, $j=1,2$.

$\mathbf{Step~2}$ Regularity and decay bounds. Let $m \in [0,2]$ be given. First, we recall that $\|Q\|_{H^m}\lesssim1$ and $|Q(x)|\lesssim\langle x\rangle^{-3}$ holds. Since, moreover,  $L_{+}\Lambda Q=-Q$ and $(\Lambda Q,Q)=0$, we can apply lemma \ref{app-lemma-1} to conclude that
\begin{align}
\|\Lambda Q\|_{H^m}\lesssim1,\ \ |\Lambda Q|\lesssim\langle x\rangle^{-3}.
\end{align}
Next, by applying $\Lambda$ to the equation $L_{+}\Lambda Q=-Q$ and using that $[L_{+},\Lambda]=D+2x\cdot\nabla Q$, we deduce that
\begin{align}
L_{+}\{-\Lambda^2Q+\Lambda Q+\alpha Q \}=-(Q^2+x\cdot\nabla Q)\Lambda Q-(1-2\alpha) Q^2.
\end{align}
for $\alpha\in\mathbb{R}$.

Using the previous bounds for $Q$ and $\Lambda Q$ (and hence for $x\cdot\nabla Q$) as well, we can apply lemma \ref{app-lemma-1} again to obtain the bounds
\begin{align}\notag
\|\Lambda^2 Q\|_{H^m}\lesssim1,\ \ |\Lambda^2Q(x)|\lesssim\langle x\rangle^{-3}.
\end{align}
Having these bounds for $\mathbf{Q}=[Q,0]^{\top}$, $\Lambda\mathbf{Q}=[\Lambda Q,0]^{\top}$, and $\Lambda^2\mathbf{Q}=[\Lambda^2 Q,0]^{\top}$ at hand, we can now prove the claimed bound \eqref{3app-regulairty} and \eqref{3app-decay} by iterating the equations satisfied  by the functions $\{\mathbf{R}_{k,l}\}_{0\leq k\leq3,0\leq l\leq1}$ above. For instance, recall that $\mathbf{R}_{1,0}=[0,S_{1,0}]^{\top}$ with $L_{-}S_{1,0}=\Lambda Q$ and hence $\Lambda L_{-}S_{1,0}=\Lambda^2Q$. Then, by using the commutator $[L_{+},\Lambda]$ and the previous estimates for $\{Q,\Lambda Q,\Lambda^2Q\}$, we derive that
\begin{align}
\|\Lambda^kS_{1,0}\|_{H^m}\lesssim1,\ |\Lambda^kS_{1,0}(x)|\lesssim\langle x\rangle^{-3},\ \ \text{for}\ k=0,1,2\ \text{and}\ m\geq0.
\end{align}
Using this and proceeding in the above, we deduce that \eqref{3app-regulairty} and \eqref{3app-decay} holds.

Finally, we mention that the bounds \eqref{3app-regu-decay} for the error term $\mathbf{\Phi}_{\mathcal{P}}$ follow from expanding $|\mathbf{Q}_{\mathcal{P}}|\mathbf{Q}_{\mathcal{P}}$ and using the regularity and decay bounds for the functions  $\{\mathbf{R}_{k,l}\}$. We omit the straightforward details. The proof of lemma \ref{lemma-3app} is now complete.
\end{proof}
\begin{remark}
Note that $L_{-}>0$ on $Q^{\bot}$ and we have $S_{1,0}\bot Q$ and $S_{0,1,j}\bot Q$, $j=1,2$.
\end{remark}
\begin{remark}
The proof of lemma \ref{lemma-3app} will actually show that the functions $\{\mathbf{R}_{k,l}\}$ have the following symmetry structure
\begin{align*}
&\mathbf{R}_{1,0}=\left[\begin{array}{c}0\\symmetry\end{array}\right],\
\mathbf{R}_{0,1,j}=\left[\begin{array}{c}0\\antisymmetry\end{array}\right],\
\mathbf{R}_{1,1,j}=\left[\begin{array}{c}antisymmetry\\0\end{array}\right],\\
&\mathbf{R}_{2,0}=\left[\begin{array}{c}symmetry\\0\end{array}\right],\
\mathbf{R}_{0,2,j}=\left[\begin{array}{c}symmetry\\0\end{array}\right],\
\mathbf{R}_{3,0}=\left[\begin{array}{c}0\\symmetry\end{array}\right],\\
&\mathbf{R}_{2,1,j}=\left[\begin{array}{c}0\\antisymmetry\end{array}\right],\
\mathbf{R}_{4,0}=\left[\begin{array}{c}symmetry\\0\end{array}\right].
\end{align*}
These symmetry properties will be of essential use in the sequel.
\end{remark}

We now turn to some key properties of the approximate blowup profile $\mathbf{Q}_{\mathcal{P}}$ constructed in lemma \ref{lemma-3app}.
\begin{lemma}\label{lemma-3app-2}
The mass, the energy and the linear momentum of $\mathbf{Q}_{\mathcal{P}}$ satisfy
\begin{align*}
\int|\mathbf{Q}_{\mathcal{P}}|^2&=\int  Q^2+\mathcal{O}(a^4+b^2+b\mathcal{P}^2),\\
E(\mathbf{Q}_{\mathcal{P}})&=e_1a^2+\mathcal{O}(a^4+b^2+b\mathcal{P}^2),\\
P(\mathbf{Q}_{\mathcal{P}})&=p_1b+\mathcal{O}(a^4+b^2+b\mathcal{P}^2).
\end{align*}
Here $e_1>0$ and $p_1>0$ are the positive constants given by
\begin{align}\notag
e_1=\frac{1}{2}(L_{-}S_{1,0},S_{1,0}),\ p_1=2\int_{\mathbb{R}^2}L_{-}S_{0,1}\cdot S_{0,1},
\end{align}
where $S_{1,0}$ and $S_{0,1}$ satisfy $L_{-}S_{1,0}=\Lambda Q$ and $L_{-}S_{0,1}=-\nabla Q$, respectively.
\end{lemma}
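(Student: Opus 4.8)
The plan is to substitute the expansion of $\mathbf{Q}_{\mathcal{P}}$ from Lemma~\ref{lemma-3app} into each functional, Taylor-expand about the ground state $\mathbf{Q}=[Q,0]^{\top}$, isolate the pieces of order $a^{2}$ and $a^{3}$ (equivalently of order $\|\mathcal{P}\|^{3}$, since $|b|\sim a^{2}$), and absorb everything else into the remainder $\mathcal{O}(a^{4}+b^{2}+b\mathcal{P}^{2})$. Writing $W:=\mathbf{Q}_{\mathcal{P}}-\mathbf{Q}=[W_{1},W_{2}]^{\top}$, the explicit formulas and the symmetry table of Lemma~\ref{lemma-3app} give
\[
W_{1}=a^{2}T_{2,0}+a\textstyle\sum_{j}b_{j}T_{1,1,j}+\mathcal{O}(a^{4}+b^{2}),\qquad
W_{2}=aS_{1,0}+\textstyle\sum_{j}b_{j}S_{0,1,j}+\mathcal{O}(a^{3}+a^{2}|b|),
\]
where $Q,S_{1,0},T_{2,0},S_{3,0}$ are radial and $\partial_{j}Q,S_{0,1,j},T_{1,1,j},S_{2,1,j}$ are odd in $x_{j}$. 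Two facts carry the whole argument: (i) the solvability identity $-(Q,T_{2,0})=\tfrac12(S_{1,0},S_{1,0})$ already proved in \eqref{3app-relation}; and (ii) the parity dichotomy, that the $L^{2}$-pairing of a radial function with an $x_{j}$-odd one vanishes. Every remainder monomial $a^{i}b^{j}$ that appears satisfies $a^{i}b^{j}\lesssim a^{4}+b^{2}+b\mathcal{P}^{2}$ and will be estimated crudely by \eqref{3app-regulairty}--\eqref{3app-decay}; by contrast, the monomials $a^{2}$, $ab_{j}$ and $a^{3}$ cannot be absorbed into the remainder, so their cancellation is forced and must be obtained exactly.

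For the mass, $\int|\mathbf{Q}_{\mathcal{P}}|^{2}=\int Q^{2}+2(\mathbf{Q},W)+\|W\|_{2}^{2}$. Since $\mathbf{Q}$ has vanishing second component, $(\mathbf{Q},W)=(Q,W_{1})=a^{2}(Q,T_{2,0})+\mathcal{O}(a^{4}+b^{2})$, the $ab_{j}$-terms $(Q,T_{1,1,j})$ vanishing by parity; likewise $\|W\|_{2}^{2}=\|W_{1}\|_{2}^{2}+\|W_{2}\|_{2}^{2}=a^{2}\|S_{1,0}\|_{2}^{2}+\mathcal{O}(a^{4}+b^{2}+b\mathcal{P}^{2})$, the $ab_{j}$ cross terms $(S_{1,0},S_{0,1,j})$ again vanishing by parity. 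Hence $\int|\mathbf{Q}_{\mathcal{P}}|^{2}=\int Q^{2}+a^{2}\bigl(2(Q,T_{2,0})+\|S_{1,0}\|_{2}^{2}\bigr)+\mathcal{O}(\cdots)$, and the bracket is zero by \eqref{3app-relation}. For the energy, Taylor-expand $E(\mathbf{Q}+W)=E(\mathbf{Q})+dE(\mathbf{Q})[W]+\tfrac12 d^{2}E(\mathbf{Q})[W,W]+(\text{cubic and higher in }W)$; here $E(\mathbf{Q})=0$ by Pohozaev (equivalently $E_{v=0}(Q)=0$, \eqref{energy=0-hf-N}), and the cubic-and-higher terms are $\mathcal{O}(\|\mathcal{P}\|^{4})$ because $W_{1}=\mathcal{O}(\|\mathcal{P}\|^{2})$. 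Using $DQ-Q^{2}=-Q$ one gets $dE(\mathbf{Q})[W]=-(Q,W_{1})=\tfrac{a^{2}}{2}\|S_{1,0}\|_{2}^{2}+\mathcal{O}(a^{4}+b^{2})$ by \eqref{3app-relation}, and rewriting the Hessian through $L_{\pm}$ gives $\tfrac12 d^{2}E(\mathbf{Q})[W,W]=\tfrac12(W_{1},L_{+}W_{1})+\tfrac12(W_{2},L_{-}W_{2})-\tfrac12\|W\|_{2}^{2}=\tfrac{a^{2}}{2}(S_{1,0},L_{-}S_{1,0})-\tfrac{a^{2}}{2}\|S_{1,0}\|_{2}^{2}+\mathcal{O}(\|\mathcal{P}\|^{4})$, the $ab_{j}$ cross term $(S_{1,0},L_{-}S_{0,1,j})=-(S_{1,0},\partial_{j}Q)$ vanishing by parity. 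Summing, the $\tfrac{a^{2}}{2}\|S_{1,0}\|_{2}^{2}$ contributions cancel and $E(\mathbf{Q}_{\mathcal{P}})=\tfrac12 a^{2}(L_{-}S_{1,0},S_{1,0})+\mathcal{O}(\cdots)=e_{1}a^{2}+\mathcal{O}(\cdots)$, with $e_{1}=\tfrac12(L_{-}S_{1,0},S_{1,0})>0$ since $L_{-}>0$ on $Q^{\perp}$ and $S_{1,0}\perp Q$.

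For the momentum, write $P(\mathbf{u})=\int (J\mathbf{u})\cdot\nabla\mathbf{u}$ (the real vector form of $\int -i\nabla u\,\bar{u}$). Then $P(\mathbf{Q})=0$ since the second component of $\mathbf{Q}$ vanishes; the part bilinear in $W$ is $\mathcal{O}(\|\mathcal{P}\|^{3})$ with leading contribution $-2a^{3}(S_{1,0},\partial_{k}T_{2,0})=0$ by parity, hence in fact $\mathcal{O}(\|\mathcal{P}\|^{4})$; and, after one integration by parts, the part linear in $W$ equals $-2(W_{2},\partial_{k}Q)$ in the $k$-th component. Expanding $W_{2}$, the $a$-term $(S_{1,0},\partial_{k}Q)$ vanishes by parity, and in $\sum_{j}b_{j}(S_{0,1,j},\partial_{k}Q)$ only $j=k$ survives, again by parity, so the linear part is $-2b_{k}(S_{0,1,k},\partial_{k}Q)=2b_{k}(L_{-}S_{0,1,k},S_{0,1,k})$ after $L_{-}S_{0,1,k}=-\partial_{k}Q$. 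By the rotational symmetry of $Q$ this coefficient is independent of $k$, so $P(\mathbf{Q}_{\mathcal{P}})=p_{1}b+\mathcal{O}(a^{4}+b^{2}+b\mathcal{P}^{2})$ with $p_{1}=2(L_{-}S_{0,1},S_{0,1})>0$, positivity following again from $L_{-}>0$ on $Q^{\perp}$ and $S_{0,1,j}\perp Q$.

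The main obstacle is not any single estimate but the bookkeeping: one must check, using only the closed-form expressions of Lemma~\ref{lemma-3app} and the radial-versus-$x_{j}$-odd table, that every monomial up to order $\|\mathcal{P}\|^{3}$ in the (lengthy) expansions of $E$ and $P$ either reproduces the announced leading coefficient, via \eqref{3app-relation} and the standard $L_{\pm}$- and $\Lambda$-commutator identities, or vanishes identically by the parity dichotomy; only after this is done may the genuinely $\mathcal{O}(\|\mathcal{P}\|^{4})$ tail be discarded by \eqref{3app-regulairty}--\eqref{3app-decay}. The conceptual point is that terms of size $a^{2}$ and $ab_{j}$ are too large to hide in $\mathcal{O}(a^{4}+b^{2}+b\mathcal{P}^{2})$, so the reappearance of \eqref{3app-relation} here is exactly the statement that the $O(a^{2})$ mass correction vanishes and that $e_{1}$ has the indicated value.
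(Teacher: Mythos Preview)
Your proof is correct and follows essentially the same approach as the paper: expand $\mathbf{Q}_{\mathcal{P}}$ about $\mathbf{Q}$, kill the $ab_{j}$ cross terms by the radial/odd parity dichotomy, and invoke the solvability identity \eqref{3app-relation} to eliminate the $a^{2}$ mass correction and to produce $e_{1}$ in the energy. Your organization of the energy computation via the Taylor expansion $E(\mathbf{Q})+dE(\mathbf{Q})[W]+\tfrac12 d^{2}E(\mathbf{Q})[W,W]+\cdots$ together with the rewriting $\tfrac12 d^{2}E(\mathbf{Q})=\tfrac12\,\mathrm{diag}(L_{+},L_{-})-\tfrac12\,\mathrm{Id}$ is somewhat more streamlined than the paper's direct expansion of $(\mathbf{Q}_{\mathcal{P}},D\mathbf{Q}_{\mathcal{P}})$ and $(\mathbf{Q}_{\mathcal{P}},|\mathbf{Q}_{\mathcal{P}}|\mathbf{Q}_{\mathcal{P}})$, but the underlying identities (in particular \eqref{3app-relation} and the commutator relations for $L_{-}$ and $\Lambda$) are the same. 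One small point you leave implicit: your claim that the cubic-and-higher terms in the energy expansion are $\mathcal{O}(\|\mathcal{P}\|^{4})$ ``because $W_{1}=\mathcal{O}(\|\mathcal{P}\|^{2})$'' uses that $|u|^{3}$ is even in $\Im u$, so any third derivative at $\mathbf{Q}=[Q,0]^{\top}$ must carry at least one factor of $W_{1}$; likewise, in the momentum the $a^{3}$ contribution from the linear part, $-2a^{3}(S_{3,0},\partial_{k}Q)$, also vanishes by parity (radial against odd), which your final paragraph acknowledges must be checked but does not spell out.
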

\begin{proof}
From the proof of lemma \ref{lemma-3app}, we recall that the facts that $\mathbf{R}_{1,0}=[0,S_{1,0}]^{\top}$, $\mathbf{R}_{0,1,j}=[0,S_{0,1,j}]^{\top}$ and $\mathbf{R}_{1,1,j}=[T_{1,1,j},0]^{\top}$ with some antisymmetry function. Hence we have $\int\mathbf{Q}\cdot\mathbf{R}_{0,1,j}
=\int\mathbf{Q}\cdot\mathbf{R}_{1,0}=\int\mathbf{Q}\cdot\mathbf{R}_{1,1,j}=0$, $j=1,2$. Next, we recall that $\mathbf{R}_{2,0}=[T_{2,0},0]^{\top}$ satisfies $(S_{1,0},S_{1,0})+2(Q,T_{2,0})=0$, shown in \eqref{3app-relation} above. In summary,we see that
\begin{align}\notag
\int|\mathbf{Q}_{\mathcal{P}}|^2&=\int  Q^2+\mathcal{O}(a^4+b^2+b\mathcal{P}^2).
\end{align}

For the expansion for the linear momentum functional, we observe that $P(\mathbf{f})=2\int f_1\nabla f_2$, where $\mathbf{f}=[f_1,f_2]^{\top}$. Hence
\begin{align*}
P(\mathbf{Q}_{\mathcal{P}})=&2a\int Q\nabla S_{1,0}+2\sum_{j=1}^2b_j\int Q\nabla S_{0,1,j}+2a^2\sum_{j=1}^2b_j\int T_{1,1,j}\nabla S_{1,0}+2a^3\int T_{2,0}\nabla S_{1,0}\\
&+\mathcal{O}(a^4+b^2+b\mathcal{P}^2)\\
=&-2\sum_{j=1}^2b_j( \nabla Q, S_{0,1,j})+\mathcal{O}(a^4+b^2+b\mathcal{P}^2)\\
=&2\sum_{j=1}^2b_j(L_{-}S_{0,1},S_{0,1,j})+\mathcal{O}(a^4+b^2+b\mathcal{P}^2)\\
=&bp_1+\mathcal{O}(a^4+b^2+b\mathcal{P}^2),
\end{align*}
Since $L_{-}S_{0,1,j}=-\partial_{x_j} Q$, and using that $\int Q\nabla S_{1,0}+\int T_{1,1,j}\nabla S_{1,0}+\int T_{2,0}\nabla S_{1,0}=0$ due to the fact that $Q,S_{1,0},T_{2,0}$ are the radial symmetry functions.

To treat the expansion of the energy, we first recall that $E(\mathbf{Q})=0$ and $DQ+Q-Q^2=0$ and $E'(\mathbf{Q})=-Q$. Moreover, since we have $(Q,S_{1,0})=0$ and $(Q,S_{0,1,j})=0$, $j=1,2$ we obtain
\begin{align*}
E(\mathbf{Q}_{\mathcal{P}})
=&\frac{1}{2}(\mathbf{Q}_{\mathcal{P}},D\mathbf{Q}_{\mathcal{P}})
-\frac{1}{3}(\mathbf{Q}_{\mathcal{P}},|\mathbf{Q}_{\mathcal{P}}|\mathbf{Q}_{\mathcal{P}})\\
=&a^2\left\{\frac{1}{2}(Q,DQ)+\frac{1}{2}(S_{1,0},DS_{1,0})
+(T_{2,0},DQ)\right\}\\
+&a^2\left\{-\frac{1}{3}(Q,Q^2)
-(T_{2,0},Q^2)-\frac{1}{2}(S_{1,0}^2,Q)\right\}
+\mathcal{O}(a^4+b^2+b\mathcal{P}^2)\\
=&a^2\left\{\frac{1}{2}(S_{1,0},DS_{1,0})+(T_{2,0},DQ)\right\}\\
&+a^2\left\{-(T_{2,0},Q^2)-\frac{1}{2}(S_{1,0}^2,Q)\right\}
+\mathcal{O}(a^4+b^2+b\mathcal{P}^2).
\end{align*}
Note that the term $\mathcal{O}(ab)$ vanishes in the expansion for  $E(\mathbf{Q}_{\mathcal{P}})$, since $S_{0,1}$ and $S_{1,0}$ are the antisymmetry and radial symmetry functions, respectively, and hence $(S_{1,0},DS_{0,1})=0$. Using $DQ+Q-Q^2=0$ and \eqref{3app-5}, we obtain that \begin{align*}
E(\mathbf{Q}_{\mathcal{P}})
=&a^2\left\{\frac{1}{2}(S_{1,0},DS_{1,0})-(T_{2,0},Q)-\frac{1}{2}(S_{1,0}^2,Q)\right\}
+\mathcal{O}(a^4+b^2+b\mathcal{P}^2)\\
=&a^2\left\{\frac{1}{2}(S_{1,0},DS_{1,0})+\frac{1}{2}(L_{-}S_{1,0},S_{1,0})-(L_{-}S_{1,0},\Lambda S_{1,0})+\frac{1}{2}(\Lambda Q,S_{1,0}^2)\right\}\\
&-\frac{1}{2}(S_{1,0}^2,Q)+\mathcal{O}(a^4+b^2+b\mathcal{P}^2)\\
=&\frac{a^2}{2}(L_{-}S_{1,0},S_{1,0})+a^2(\frac{1}{2}-\frac{1}{2})(S_{1,0}^2,Q)
+\mathcal{O}(a^4+b^2+b\mathcal{P}^2)\\
=&\frac{a^2}{2}(L_{-}S_{1,0},S_{1,0})+\mathcal{O}(a^4+b^2+b\mathcal{P}^2)\\
=&a^2e_1+\mathcal{O}(a^4+b^2+b\mathcal{P}^2).
\end{align*}
The proof of lemma \ref{lemma-3app-2} is now complete.
\end{proof}

\section{Modulation Estimates}\label{section-mod-estimate}
We start with a general observation: If $u=u(t,x)$ solves \eqref{equ-1-hf-2}, then we define the function $v=v(s,y)$ by setting
\begin{align}\notag
u(t,x)=\frac{1}{\lambda(t)}v\left(s,\frac{x-\alpha(t)}{\lambda(t)}\right)
e^{i\gamma(t)},\ \ \frac{ds}{dt}=\frac{1}{\lambda(t)},
\end{align}
here $s=s(t)$. It is easy to check that $v=v(s,y)$ with $y=\frac{x-\alpha}{\lambda}$ satisfies
\begin{equation}\notag
i\partial_sv-Dv-v+|v|v=i\frac{\lambda_s}{\lambda}\Lambda v+i\frac{\alpha_s}{\lambda}\cdot\nabla v+\tilde{\gamma_s}v,
\end{equation}
where we set $\tilde{\gamma_s}=\gamma_s-1$. Here the operators $D$ and $\nabla$ are understood as $D=D_y$ and $\nabla=\nabla_y$, respectively.
\subsection{Geometrical Decomposition and Modulation Equations}
Let $u\in H^{1/2}(\mathbb{R}^2)$ be a solution of \eqref{equ-1-hf-2} on some time interval $[t_0,t_1]$ with $t_1<0$. Assume that $u(t)$ admits a geometrical decomposition of the form
\begin{align}\label{mod-decomposition}
u(t,x)=\frac{1}{\lambda(t)}[Q_{\mathcal{P}(t)}+\epsilon]
\left(s,\frac{x-\alpha(t)}{\lambda(t)}\right)
e^{i\gamma(t)},\ \ \frac{ds}{dt}=\frac{1}{\lambda(t)},
\end{align}
with $\mathcal{P}(t)=(a(t),b(t))$, and we impose the uniform smallness bound
\begin{align}
a^2(t)+|b(t)|+\|\epsilon\|_{H^{1/2}}^2\ll1.
\end{align}
Furthermore, we assume that $u(t)$ has almost critical mass in the sense that
\begin{align}\label{mod-mass-assume}
\left|\int|u(t)|^2-\int Q^2\right|\lesssim\lambda^3(t),\ \ \forall t\in[t_0,t_1].
\end{align}
To fix the modulation parameters $\{a(t),b(t),\lambda(t),\alpha(t),\gamma(t)\}$ uniquely, we impose the following orthogonality conditions on $\epsilon=\epsilon_1+i\epsilon_2$ as follows:
\begin{align}\label{mod-orthogonality-condition}
(\epsilon_2,\Lambda Q_{1\mathcal{P}})-(\epsilon_1,\Lambda Q_{2\mathcal{P}})=0,\notag\\
(\epsilon_2,\partial_aQ_{1\mathcal{P}})-(\epsilon_1,\partial_aQ_{2\mathcal{P}})=0,\notag\\
(\epsilon_2,\partial_{b_j}Q_{1\mathcal{P}})-(\epsilon_1,\partial_{b_j}Q_{2\mathcal{P}})=0,\notag\\
(\epsilon_2,\partial_{x_j} Q_{1\mathcal{P}})-(\epsilon_1,\partial_{x_j} Q_{2\mathcal{P}})=0,\notag\\
(\epsilon_2,\rho_1)+(\epsilon_1,\rho_2)=0,
\end{align}
 the function $\rho=\rho_1+i\rho_2$ is defined by
\begin{align}\label{mod-definition-rho}
L_{+}\rho_1=S_{1,0}, L_{-}\rho_2=aS_{1,0}\rho_1+a\Lambda\rho_1-2aT_{2,0}
+b\cdot S_{0,1}\rho_1-b\cdot\nabla\rho_1-b\cdot T_{1,1},
\end{align}
where $S_{1,0}$, $T_{2,0}$ and $T_{1,1,j}$ are the functions introduced in the proof of lemma \ref{lemma-3app}. Note that $L_{+}^{-1}$ exists on $L^2_{rad}(\mathbb{R}^2)$ and thus $\rho_1$ is well-defined. Moreover, it is easy to see that the right-hand side in the equation for $\rho_2$ is orthogonality to $Q$. Indeed
\begin{align*}
(Q,S_{1,0}\rho_1+\Lambda\rho_1-2T_{2,0})&=
(QS_{1,0},\rho_1)-(\Lambda Q,\rho_1)-2(Q,T_{2,0})\\
&=(QS_{1,0},\rho_1)-(S_{1,0},L_{-}\rho_1)+(S_{1,0},S_{1,0})\\
&=-(S_{1,0},L_{+}\rho_1)+(S_{1,0},S_{1,0})=0,
\end{align*}
using that $(S_{1,0},S_{1,0})=-2(T_{2,0},Q)$, see \eqref{3app-relation}, and the definition of $\rho_1$. Moreover, we clearly see that $S_{0,1,j}\rho_1-\partial_{x_j}\rho_1-T_{1,1,j}\bot Q$, since $S_{0,1,j}$ and $T_{1,1,j}$ are the antisymmetry functions, whereas $\rho_1$ and $Q$ are radial symmetry functions. Hence $\rho_2$ is well-defined.

In the conditions \eqref{mod-orthogonality-condition}, we use the notation
\begin{align}\notag
Q_{\mathcal{P}}=Q_{1\mathcal{P}}+iQ_{2\mathcal{P}},
\end{align}
which (in terms of the vector notation used in Section 3) means that
\begin{align}\notag
\mathbf{Q}_{\mathcal{P}}={\left[\begin{array}{c}
Q_{1\mathcal{P}}\\Q_{2\mathcal{P}}
\end{array}
\right]}.
\end{align}

We refer to Appendix \ref{section-app-mod-1} for some standard arguments, which show that the orthogonality condition \eqref{mod-orthogonality-condition} imply that the modulation parameters $\{a(t),b(t),\lambda(t),\alpha(t),\gamma(t)\}$ are uniquely determined, provided that $\epsilon=\epsilon_1+i\epsilon_2\in H^{1/2}(\mathbb{R}^2)$ is sufficiently small. Moreover, it follows from the standard arguments that $\{a(t),b(t),\lambda(t),\alpha(t),\gamma(t)\}$ are $C^1$-functions.

If we insert the decomposition \eqref{mod-decomposition} into \eqref{equ-1-hf-2}, we obtain the following system
\begin{align}\label{mod-system-1}
&(a_s+\frac{1}{2}a^2)\partial_aQ_{1\mathcal{P}}+\sum_{j=1}^2((b_j)s+ab_j)\partial_{b_j}Q_{1\mathcal{P}}
+\partial_s\epsilon_1-M_{-}(\epsilon)+a\Lambda\epsilon_1-b\cdot\nabla\epsilon_1\notag\\
=&(\frac{\lambda_s}{\lambda}+a)(\Lambda Q_{1\mathcal{P}}+\Lambda\epsilon_1)
+(\frac{\alpha_s}{\lambda}-b)\cdot(\nabla Q_{1\mathcal{P}}+\nabla\epsilon_1)\notag\\
&+\tilde{\gamma}_s(Q_{2\mathcal{P}}+\epsilon_2)+\Im(\Phi_{\mathcal{P}})
-R_2(\epsilon),\\\label{mod-system-2}
&(a_s+\frac{1}{2}a^2)\partial_aQ_{2\mathcal{P}}+\sum_{j=1}^2((b_j)s+ab_j)\partial_{b_j}Q_{2\mathcal{P}}
+\partial_s\epsilon_2+M_{+}(\epsilon)+a\Lambda\epsilon_2-b\cdot\nabla\epsilon_2\notag\\
=&(\frac{\lambda_s}{\lambda}+a)(\Lambda Q_{2\mathcal{P}}+\Lambda\epsilon_2)
+(\frac{\alpha_s}{\lambda}-b)\cdot(\nabla Q_{2\mathcal{P}}+\nabla\epsilon_2)\notag\\
&-\tilde{\gamma}_s(Q_{1\mathcal{P}}-\epsilon_1)-\Re(\Phi_{\mathcal{P}})+R_1(\epsilon).
\end{align}
Here $\Phi_{\mathcal{P}}$ denotes the error term from lemma \ref{lemma-3app}, and $M=(M_{+},M_{-})$ are the small deformations of the linearized operator $L=(L_{+},L_{-})$ given by
\begin{align}\label{mod-define-M1}
M_{+}(\epsilon)=&D\epsilon_1+\epsilon_1
-\frac{3}{2}|Q_{\mathcal{P}}|\epsilon_1
-\frac{1}{2}|Q_{\mathcal{P}}|^{-1}(Q_{1\mathcal{P}}^2-Q_{2\mathcal{P}}^2)\epsilon_1\notag\\
&-|Q_{\mathcal{P}}|^{-1}Q_{1\mathcal{P}}Q_{2\mathcal{P}}\epsilon_2,\\\label{mod-define-M2}
M_{-}(\epsilon)=&D\epsilon_2+\epsilon_2
-\frac{3}{2}|Q_{\mathcal{P}}|\epsilon_2
-\frac{1}{2}|Q_{\mathcal{P}}|^{-1}(Q_{1\mathcal{P}}^2-Q_{2\mathcal{P}}^2)\epsilon_2\notag\\
&-|Q_{\mathcal{P}}|^{-1}Q_{1\mathcal{P}}Q_{2\mathcal{P}}\epsilon_1.
\end{align}
And $R_1(\epsilon)$, $R_2(\epsilon)$ are the high order terms about $\epsilon$.
\begin{align*}
R_1(\epsilon)=&\frac{5}{4}|Q_{\mathcal{P}}|^{-1}Q_{1\mathcal{P}}|\epsilon|^2
+\frac{3}{8}|Q_{\mathcal{P}}|^{-1}\left(Q_{1\mathcal{P}}(\epsilon_1^2-\epsilon_2^2)
+2Q_{2\mathcal{P}}\epsilon_1\epsilon_2\right)\\
&+\frac{1}{8}|Q_{\mathcal{P}}|^{-3}\left((Q_{1\mathcal{P}}^3-3Q_{1\mathcal{P}}Q_{2\mathcal{P}}^2)(\epsilon_1-\epsilon_2)
+2(3Q_{1\mathcal{P}}^2Q_{2\mathcal{P}}-Q_{2\mathcal{P}}^3)\epsilon_1\epsilon_2\right)+\mathcal{O}(\epsilon^3),\\
R_2(\epsilon)=&\frac{5}{4}|Q_{\mathcal{P}}|^{-1}Q_{2\mathcal{P}}|\epsilon|^2
+\frac{3}{8}|Q_{\mathcal{P}}|^{-1}\left(2Q_{1\mathcal{P}}\epsilon_1\epsilon_2
+Q_{2\mathcal{P}}(\epsilon_1^2-\epsilon_2^2)\right)\\
&+\frac{1}{8}|Q_{\mathcal{P}}|^{-3}\left(2(Q_{1\mathcal{P}}^3-3Q_{1\mathcal{P}}Q_{2\mathcal{P}}^2)
\epsilon_2\epsilon_2+(3Q_{1\mathcal{P}}^2Q_{2\mathcal{P}}-Q_{2\mathcal{P}}^3)
(\epsilon_1^2-\epsilon_2^2)\right)+\mathcal{O}(\epsilon^3).
\end{align*}

 We have the following energy type bound.
\begin{lemma}\label{lemma-mod-1}
For $t\in[t_0,t_1]$ with $t_1<0$, it holds that
\begin{align}\notag
a^2+|b|+\|\epsilon\|_{H^{1/2}}^2\lesssim\lambda(|E_0|+|P_0|)
+\mathcal{O}(\lambda^3+a^4+b^2+b\mathcal{P}^2).
\end{align}
Here $E_0=E(u_0)$ and $P_0=P(u_0)$ denote the conserved energy and linear momentum of $u=u(t,x)$, respectively.
\end{lemma}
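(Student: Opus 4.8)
The plan is to combine the two conservation laws (energy and linear momentum) with the expansions of $E(\mathbf{Q}_{\mathcal{P}})$ and $P(\mathbf{Q}_{\mathcal{P}})$ from Lemma~\ref{lemma-3app-2} and a coercivity estimate for the linearized operator $L$ on the subspace cut out by the orthogonality conditions \eqref{mod-orthogonality-condition}. Concretely, I would first write $u(t)$ in the geometric decomposition \eqref{mod-decomposition} and compute the conserved quantities in the rescaled variables: since $E$ is homogeneous of degree one under the scaling $u\mapsto\lambda^{-1}u(\cdot/\lambda)$, one has $\lambda E_0=E(Q_{\mathcal{P}}+\epsilon)$, and similarly $\lambda P_0=P(Q_{\mathcal{P}}+\epsilon)$ (up to the phase/translation factors which drop out). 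Expanding $E(Q_{\mathcal{P}}+\epsilon)$ around $Q_{\mathcal{P}}$ gives $E(Q_{\mathcal{P}})+(E'(Q_{\mathcal{P}}),\epsilon)+\tfrac12(L_{\mathcal{P}}\epsilon,\epsilon)+\text{(cubic in }\epsilon)$, and Lemma~\ref{lemma-3app-2} supplies $E(Q_{\mathcal{P}})=e_1a^2+\mathcal{O}(a^4+b^2+b\mathcal{P}^2)$ and $P(Q_{\mathcal{P}})=p_1b+\mathcal{O}(\cdots)$ with $e_1,p_1>0$.

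The next step is to control the linear term $(E'(Q_{\mathcal{P}}),\epsilon)$. Here $E'(\mathbf{Q})=-Q$ to leading order, and the contribution $-(Q,\epsilon_1)$ is handled by the almost-critical-mass hypothesis \eqref{mod-mass-assume}: expanding $\int|u|^2=\int Q^2$ in the rescaled variables yields $\int Q_{\mathcal{P}}^2 + 2(Q_{\mathcal{P}},\epsilon) + \|\epsilon\|_2^2 = \int Q^2 + \mathcal{O}(\lambda^3)$, and since $\int Q_{\mathcal{P}}^2 = \int Q^2 + \mathcal{O}(a^4+b^2+b\mathcal{P}^2)$ by Lemma~\ref{lemma-3app-2}, this pins down $(Q,\epsilon_1)$ up to quadratic errors. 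The remaining pieces of $(E'(Q_{\mathcal{P}}),\epsilon)$ and $(P'(Q_{\mathcal{P}}),\epsilon)$ are linear in $(a,b)$ times $\epsilon$, hence absorbable by Cauchy--Schwarz into $\delta\|\epsilon\|_{H^{1/2}}^2 + C(a^2+|b|)\cdot(\cdots)$. Combining the energy and momentum identities suitably (e.g. forming $\lambda E_0 - c\,v\cdot(\lambda P_0)$ for a small auxiliary vector, or simply using $|\lambda E_0|\lesssim\lambda|E_0|$ and $|\lambda P_0|\lesssim\lambda|P_0|$ directly) produces
\begin{align}\notag
e_1 a^2 + \tfrac12(L_{\mathcal{P}}\epsilon,\epsilon) \lesssim \lambda(|E_0|+|P_0|) + |b|\,|\text{something}| + \mathcal{O}(\lambda^3+a^4+b^2+b\mathcal{P}^2+\|\epsilon\|_{H^{1/2}}^3),
\end{align}
and an analogous bound capturing $|b|$ from the momentum expansion, using $p_1>0$.

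The core analytic ingredient, and the step I expect to be the main obstacle, is the coercivity of $L$ (and hence of the perturbed operator $L_{\mathcal{P}}$ for $\mathcal{P}$ small) on the codimension-five subspace determined by the orthogonality conditions \eqref{mod-orthogonality-condition}: one needs
\begin{align}\notag
(L_{\mathcal{P}}\epsilon,\epsilon) \geq c\|\epsilon\|_{H^{1/2}}^2 - \tfrac1c\left[(\epsilon_1,Q)^2 + (\text{the other orthogonality directions})^2\right].
\end{align}
This rests on the spectral structure of $L_\pm = D+1-2Q$, $D+1-Q$ recalled in the proof of Lemma~\ref{lemma-3app} (kernel spanned by $\nabla Q$ and $Q$, a single negative direction of $L_+$ governed by $\Lambda Q$ via $L_+\Lambda Q=-Q$), so that choosing $\rho$ through \eqref{mod-definition-rho} precisely neutralizes the negative/null directions; the orthogonality to $\partial_a Q_{\mathcal{P}}$, $\partial_{b_j}Q_{\mathcal{P}}$, $\partial_{x_j}Q_{\mathcal{P}}$, $\Lambda Q_{\mathcal{P}}$ kills the zero modes, and the last condition $(\epsilon_2,\rho_1)+(\epsilon_1,\rho_2)=0$ together with the almost-critical mass controls the $\Lambda Q$ direction. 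Since $\int Q^2 = \|Q\|_2^2$ is exactly critical, the scaling direction is a borderline zero mode and one must use the second-order information $-(Q,T_{2,0}) = \tfrac12(S_{1,0},S_{1,0}) = e_1$ from \eqref{3app-relation} to see that the curvature in the $a$-direction is genuinely positive ($e_1>0$); this is exactly why the term $e_1a^2$ appears with a good sign. Once coercivity is in hand, the displayed inequalities close to give $a^2 + |b| + \|\epsilon\|_{H^{1/2}}^2 \lesssim \lambda(|E_0|+|P_0|) + \mathcal{O}(\lambda^3+a^4+b^2+b\mathcal{P}^2)$ after absorbing the higher-order terms on the left (using the smallness $a^2+|b|+\|\epsilon\|_{H^{1/2}}^2\ll1$), which is the claim.
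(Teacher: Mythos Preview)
Your overall strategy matches the paper's: rescale so that $\lambda E_0=E(Q_{\mathcal{P}}+\epsilon)$ and $\lambda P_0=P(Q_{\mathcal{P}}+\epsilon)$, expand to second order, insert Lemma~\ref{lemma-3app-2}, control $(\epsilon_1,Q)$ via the mass relation~\eqref{mod-mass-assume}, and close using the coercivity of $L$ (Lemma~\ref{lemma-app-coercivity-estimate}).

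The one place where your sketch diverges from the paper and is potentially problematic is the treatment of the $\mathcal{O}(\mathcal{P})$ part of the linear term $\Re(\epsilon,E'(Q_{\mathcal{P}}))$. You propose to bound the residual ``linear in $(a,b)$ times $\epsilon$'' pieces by Cauchy--Schwarz, producing $\delta\|\epsilon\|_{H^{1/2}}^2+C_\delta(a^2+|b|^2)$; but then $C_\delta a^2$ competes directly with the $e_1a^2$ you want to keep, and there is no a~priori relation between $e_1$ and the coercivity constant that lets you choose $\delta$ so that both absorptions succeed. The paper sidesteps this entirely: combining the mass identity with the energy expansion turns the linear term into $\Re(\epsilon,\,DQ_{\mathcal{P}}+Q_{\mathcal{P}}-|Q_{\mathcal{P}}|Q_{\mathcal{P}})$, and the approximate profile equation~\eqref{equ-3approxiamte} rewrites this as
\[
\Im\Bigl(\epsilon,\ \tfrac{a^2}{2}\partial_aQ_{\mathcal{P}}+a\textstyle\sum_j b_j\partial_{b_j}Q_{\mathcal{P}}-a\Lambda Q_{\mathcal{P}}+b\cdot\nabla Q_{\mathcal{P}}\Bigr)+\mathcal{O}\bigl(\|\epsilon\|_2\|\Phi_{\mathcal{P}}\|\bigr).
\]
Each inner product on the right is \emph{exactly} one of the orthogonality conditions~\eqref{mod-orthogonality-condition}, hence zero; so the linear term is not $\mathcal{O}(|\mathcal{P}|\,\|\epsilon\|)$ but $\mathcal{O}((a^5+b^2\mathcal{P})\|\epsilon\|)$, which is harmless. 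In other words, the orthogonality conditions are dual-purpose---they simultaneously give coercivity of the quadratic form (as you note) \emph{and} annihilate the linear term---and you need to invoke them at the linear stage, not just at the quadratic one. With that correction your argument closes.
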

\begin{proof}
By the conservation of $L^2$-mass and lemma \ref{lemma-3app-2}, we find that
\begin{align}\notag
\int|u|^2=\int|Q_{\mathcal{P}}+\epsilon|^2=\int|Q|^2+2\Re(\epsilon,Q_{\mathcal{P}})
+\int|\epsilon|^2+\mathcal{O}(a^4+b^2+b\mathcal{P}^2).
\end{align}
By assumption \eqref{mod-mass-assume}, this implies
\begin{align}\label{mod-mass}
2\Re(\epsilon,Q_{\mathcal{P}})+\int|\epsilon|^2
=\mathcal{O}(\lambda^3+a^4+b^2+b\mathcal{P}^2).
\end{align}
Next, we recall that $v=Q_{\mathcal{P}}+\epsilon$ and the assumed form of $u=u(t,x)$. Hence, by energy conservation and scaling, we obtain
\begin{align}\label{mod-energy}
E(v)=\lambda E(u_0).
\end{align}
On the other hand, from lemma \ref{lemma-3app-2} and by expanding the energy functional
\begin{align*}
E(v)=&E(Q_{\mathcal{P}}+\epsilon)\\
=&\frac{1}{2}(Q_{\mathcal{P}}+\epsilon,D(Q_{\mathcal{P}}+\epsilon))
-\frac{1}{3}(Q_{\mathcal{P}}+\epsilon,
|Q_{\mathcal{P}}+\epsilon|(Q_{\mathcal{P}}+\epsilon))\\
=&\frac{1}{2}\left\{(Q_{\mathcal{P}},DQ_{\mathcal{P}})
+(\epsilon,D\epsilon)\right\}
+\Re(\epsilon,DQ_{\mathcal{P}})\\
&-\frac{1}{3}\left\{(Q_{\mathcal{P}},|Q_{\mathcal{P}}+\epsilon|Q_{\mathcal{P}})
+2(\epsilon,|Q_{\mathcal{P}}+\epsilon|Q_{\mathcal{P}})
+(\epsilon,|Q_{\mathcal{P}}+\epsilon|\epsilon)\right\}\\
=&E(Q_{\mathcal{P}})+\frac{1}{2}(\epsilon,D\epsilon)
+\Re(\epsilon,DQ_{\mathcal{P}}-|Q_{\mathcal{P}}|Q_{\mathcal{P}})\\
&-\frac{1}{2}\int|Q_{\mathcal{P}}||\epsilon|^2
+\frac{1}{8}\int|Q_{\mathcal{P}}|^{-1}(2Q_{1\mathcal{P}}\epsilon_1+2Q_{2\mathcal{P}}\epsilon_2)^2\\
&+\mathcal{O}(\|\epsilon\|_{H^{1/2}}^3+\mathcal{P}^2\|\epsilon\|_{H^{1/2}}^2).
\end{align*}
Combining the above equality and \eqref{mod-mass}, \eqref{mod-energy} we find that
\begin{align*}
\lambda E_0=&a^2e_1+\Re(\epsilon,DQ_{\mathcal{P}}
+Q_{\mathcal{P}}-|Q_{\mathcal{P}}|Q_{\mathcal{P}})
+\frac{1}{2}\{M_{+}\epsilon_1+M_{-}\epsilon_2\}\\
&+\mathcal{O}(\|\epsilon\|_{H^{1/2}}^3+\mathcal{P}^2\|\epsilon\|_{H^{1/2}}^2
+a^4+b^2+b\mathcal{P}^2),
\end{align*}
where $e_1=\frac{1}{2}(L_{-}S_{1,0},S_{1,0})>0$.
In the previous equation, we note that the linear term in $\epsilon=\epsilon_1+i\epsilon_2$ satisfies
\begin{align*}
&\Re(\epsilon,DQ_{\mathcal{P}}
+Q_{\mathcal{P}}-|Q_{\mathcal{P}}|Q_{\mathcal{P}})\\
=&\Im(\epsilon,\frac{a^2}{2}\partial_aQ_{\mathcal{P}}+a\sum_{j=1}^2b_j\partial_{b_j}Q_{\mathcal{P}}
-a\Lambda Q_{\mathcal{P}}+b\cdot\nabla Q_{\mathcal{P}})+\mathcal{O}(\epsilon(a^4+b^2+b\mathcal{P}^2))\\
=&\mathcal{O}(a^4+b^2+b\mathcal{P}^2),
\end{align*}
thanks to the orthogonality condition \eqref{mod-orthogonality-condition}. Next, we observe that quadratic form $M=(M_{+},M_{-})$ is a small deformation of the quadratic form given by the linearization $L=(L_{+},L_{-})$ around $Q$. Hence, we deduce
\begin{align}\label{mod-1}
&a^2e_1+\frac{1}{2}\left\{(L_{+}\epsilon_1,\epsilon_1)
+(L_{-}\epsilon_2,\epsilon_2)\right\}\notag\\
=&\lambda E_0+\mathcal{O}(\|\epsilon\|_{H^{1/2}}^3+a^4+b^2+b\mathcal{P}^2)
+o(\|\epsilon\|_{H^{1/2}}^2).
\end{align}
Next, we recall from lemma \ref{lemma-app-coercivity-estimate} the coercivity estimate
\begin{align}\label{mod-coercivity-estimate}
&(L_{+}\epsilon_1,\epsilon_1)+(L_{-}\epsilon_2,\epsilon_2)\notag\\ \geq & c_0\|\epsilon\|_{H^{1/2}}^2-\frac{1}{c_0}\left\{(\epsilon_1,Q)^2+(\epsilon_1,S_{1,0})^2
+|(\epsilon_1,S_{0,1})|^2+|(\epsilon_2,\rho_1)|^2\right\},
\end{align}
with some universal constant $c_0>0$. Note that the orthogonality condition \eqref{mod-orthogonality-condition} imply that
\begin{align*}
(\epsilon_1,S_{1,0})^2=\mathcal{O}(\mathcal{P}\|\epsilon\|_2^2),\ |(\epsilon_1,S_{0,1})|^2=\mathcal{O}(\mathcal{P}\|\epsilon\|_2^2),\
(\epsilon_2,\rho_1)^2=\mathcal{O}(\mathcal{P}\|\epsilon\|_2^2).
\end{align*}
Furthermore, from the relation \eqref{mod-mass} we deduce that
\begin{align}\notag
|(\epsilon_1,Q)|^2=o(\|\epsilon\|_2^2)+\mathcal{O}(\lambda^3+a^4+b^2+b\mathcal{P}^2).
\end{align}
Combining these bounds with \eqref{mod-coercivity-estimate} and the universal smallness assumption for $\mathcal{P}$ and $\|\epsilon\|_{H^{1/2}}$, we obtain that
\begin{align}\notag
(L_{+}\epsilon_1,\epsilon_1)+(L_{-}\epsilon_2,\epsilon_2)
\geq\frac{c_0}{2}\|\epsilon\|_{H^{1/2}}^2+\mathcal{O}(a^4+b^2+b\mathcal{P}^2).
\end{align}
Inserting this bound into \eqref{mod-1} and recall that $e_1=\frac{1}{2}(L_{-}S_{1,0},S_{1,0})>0$ holds, we have
\begin{align}\label{mod-2}
a^2+\|\epsilon\|_{H^{1/2}}^2\lesssim\lambda E_0+\mathcal{O}(\lambda^3+a^4+b^2+b\mathcal{P}^2).
\end{align}

As our final step, we derive the bound for the boost parameter $b$. Here we observe that
\begin{align}\notag
P(v)=\lambda P(u_0),
\end{align}
by scaling and using the conservation of the linear momentum $P(u(t))=P(u_0)$. Hence, by expansion and lemma \ref{lemma-3app-2} and using the orthogonality condition \eqref{mod-orthogonality-condition}, we obtain
\begin{align*}
\lambda P_0=P(v)&=P(Q_{\mathcal{P}})+2\Re(\epsilon,-i\nabla Q_{\mathcal{P}})+\Re(\epsilon,-i\nabla\epsilon)\\
&=p_1b+\mathcal{O}(a^4+b^2+b\mathcal{P}^2+\|\epsilon\|_{H^{1/2}}^2),
\end{align*}
with the constant $p_1=2\int_{\mathbb{R}^2}(L_{-}S_{0,1}\cdot S_{0,1})>0$. Recalling that \eqref{mod-2} holds, we complete the proof of this lemma.
\end{proof}

\subsection{Modulation Estimates}
We continue with estimating the modulation parameters. To this end, we define the vector-valued function
\begin{align}\label{define-mod}
\mathbf{Mod}(t):=(a_s+\frac{1}{2}a^2,\tilde{\gamma}_s,\frac{\lambda_s}{\lambda}+a,
\frac{\alpha_s}{\lambda}-b,b_s+ab).
\end{align}
We have the following result.
\begin{lemma}\label{lemma-mod-2}
For $t\in[t_0,t_1]$ with $t_1<0$, we have the bound
\begin{align}
|\mathbf{Mod}(t)|\lesssim\lambda^3+a^4+b^2+b\mathcal{P}^2+\mathcal{P}^2\|\epsilon\|_2+
\|\epsilon\|_2^2+\|\epsilon\|_{H^{1/2}}^3.
\end{align}
Furthermore, we have the improved bound
\begin{align}\notag
\left|\frac{\lambda_s}{\lambda}+a\right|\lesssim a^5 +b\mathcal{P}^2+\mathcal{P}^2\|\epsilon\|_2
+\|\epsilon\|_2^2+\|\epsilon\|_{H^{1/2}}^3.
\end{align}
\end{lemma}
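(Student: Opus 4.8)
The plan is to run the classical modulation argument. The orthogonality conditions \eqref{mod-orthogonality-condition} are satisfied for all $s$, so differentiating each of them in $s$ produces seven scalar identities (the conditions indexed by $j=1,2$ giving two each). In each identity $\partial_s\epsilon_1$ and $\partial_s\epsilon_2$ appear, and I would eliminate them using the evolution system \eqref{mod-system-1}--\eqref{mod-system-2}; differentiating the $\mathcal{P}$-dependent test profiles also brings in $\partial_s\mathcal{P}=(a_s,b_s)$, which by \eqref{as11} equals $\mathbf{Mod}$-components up to $\mathcal{O}(\mathcal{P}^2)$. The net result is a linear system $A\,\mathbf{Mod}(t)=F$, with $A=A_0+\mathcal{O}(\mathcal{P}+\|\epsilon\|_{H^{1/2}})$ for a fixed matrix $A_0$ of scalar products of the ground-state profiles, and $F$ built from the error term $\Phi_{\mathcal{P}}$ of Lemma \ref{lemma-3app}, the deformation $M(\epsilon)-L(\epsilon)$ of the linearized operator, the nonlinear remainders $R_1(\epsilon),R_2(\epsilon)$, the terms $\partial_s\mathcal{P}\cdot(\text{profile},\epsilon)$, and terms of the form $(\epsilon,Q)$.

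Next I would check invertibility of $A_0$: by the symmetry structure recorded in the Remark after Lemma \ref{lemma-3app} (real versus imaginary, radial versus antisymmetric) most off-diagonal entries of $A_0$ vanish, while the surviving entries are fixed nonzero quantities such as $(\Lambda Q,S_{1,0})=(L_-S_{1,0},S_{1,0})=2e_1>0$, $\|\nabla Q\|_2^2$, and $(Q,\rho_1)=-(\Lambda Q,S_{1,0})\neq0$ (using $L_+\Lambda Q=-Q$), so $A=A_0+o(1)$ has uniformly bounded inverse. For the source, the pairing of $\Phi_{\mathcal{P}}$ against a fixed profile is $\lesssim a^5+b^2\mathcal{P}$ by \eqref{3app-regu-decay}; the $(M-L)(\epsilon)$- and $\partial_s\mathcal{P}$-contributions are $\lesssim\mathcal{P}^2\|\epsilon\|_2+|\mathbf{Mod}|\,\|\epsilon\|_2$, the last term being absorbed on the left --- here one uses crucially that the conditions \eqref{mod-orthogonality-condition} are posed relative to $Q_{\mathcal{P}}$ (not $Q$), which is what makes the would-be $\mathcal{O}(\mathcal{P}\|\epsilon\|_2)$ pieces cancel down to $\mathcal{O}(\mathcal{P}^2\|\epsilon\|_2)$; the remainder pairings are $\lesssim\|\epsilon\|_2^2+\|\epsilon\|_{H^{1/2}}^3$; and $|(\epsilon,Q)|\lesssim\|\epsilon\|_2^2+\lambda^3+a^4+b^2+b\mathcal{P}^2$ by the almost-critical-mass relation \eqref{mod-mass} together with Lemma \ref{lemma-3app-2}. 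Applying $A^{-1}$ then gives the first bound.

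The improved bound on $\lambda_s/\lambda+a$ is the delicate point, and it is where the fifth orthogonality condition --- the one built on $\rho=\rho_1+i\rho_2$ --- earns its keep. The relations \eqref{mod-definition-rho} defining $\rho$ (namely $L_+\rho_1=S_{1,0}$, together with the $\mathcal{O}(a)+\mathcal{O}(b)$ right-hand side prescribed for $L_-\rho_2$) are chosen precisely so that, after differentiating the fifth condition in $s$, substituting \eqref{mod-system-1}--\eqref{mod-system-2}, and feeding in the general bound just obtained for the other $\mathbf{Mod}$-components, the modulation equation for $\lambda_s/\lambda+a$ emerges with a nonzero leading coefficient and with a right-hand side in which the contributions of size $\lambda^3$, $a^4$ and $b^2$ have cancelled --- leaving only the $a^5$ from $\Phi_{\mathcal{P}}$, the quadratic/cubic-in-$\epsilon$ remainders, the $\mathcal{P}^2\|\epsilon\|_2$ terms, and $b\mathcal{P}^2$. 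I expect this cancellation to be the main obstacle. Verifying it requires the same toolbox as the solvability computations of Section 3: $L_+\Lambda Q=-Q$, the identity $(S_{1,0},S_{1,0})=-2(T_{2,0},Q)$ of \eqref{3app-relation}, the skew-adjointness of $\Lambda$, and the commutators $[\Lambda,\nabla]=-\nabla$, $[D,x\cdot\nabla]=D$, $[L_-,\Lambda]=D+x\cdot\nabla Q$, $[L_+,\Lambda]=D+2x\cdot\nabla Q$, pushed far enough to see that nothing of size $\lambda^3$, $a^4$, or $b^2$ survives in the $\lambda$-equation.
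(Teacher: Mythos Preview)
Your overall plan --- differentiate the seven orthogonality relations, substitute \eqref{mod-system-1}--\eqref{mod-system-2}, assemble a $7\times7$ system $(A_0+o(1))\mathbf{Mod}=F$, and invert --- is exactly what the paper does (see Steps 1--6 there and the supporting identities \eqref{app-B-1}--\eqref{app-B-5}). Your accounting of the source terms is also right.

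Where you go astray is the improved bound. You attribute it to the $\rho$-condition, but in the paper's scheme the $\rho$-equation isolates $\tilde\gamma_s$, not $\lambda_s/\lambda+a$: after testing against $\rho$ the leading coefficient is $\tilde\gamma_s\,(Q,\rho_1)$, while $\lambda_s/\lambda+a$ appears only with an $\mathcal{O}(\mathcal{P})$ prefactor (Step~3). The $\lambda$-law comes instead from testing against $\partial_aQ_{\mathcal{P}}$ (Step~2), where the leading coefficient of $\lambda_s/\lambda+a$ is $-(\Lambda Q,S_{1,0})=-2e_1\neq0$. And the ``improvement'' there is not a subtle cancellation of $\lambda^3$, $a^4$, $b^2$ terms; those terms simply never enter that particular equation. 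They only show up in Step~1 (the $a$-law), where testing against $\Lambda Q_{\mathcal{P}}$ produces, via \eqref{app-B-1}, the inhomogeneity $-\Re(\epsilon,Q_{\mathcal{P}})$, which is then traded for $\lambda^3+a^4+b^2+b\mathcal{P}^2+\|\epsilon\|_2^2$ through the mass relation \eqref{mod-mass}. By contrast \eqref{app-B-2} gives $\mathcal{O}(\mathcal{P}^2\|\epsilon\|_2)$ with no $(\epsilon,Q)$ term, and the $\Phi_{\mathcal{P}}$-contribution in that equation is directly $\mathcal{O}(a^5+b^2\mathcal{P})$ by \eqref{3app-regu-decay}. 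Feeding the general $|\mathbf{Mod}|$ bound back into the $\mathcal{O}(\mathcal{P})\cdot(b_s+ab)$ and $\mathcal{O}(|\mathbf{Mod}|)(\|\epsilon\|_2+\mathcal{P}^2)$ pieces of the $\partial_aQ_{\mathcal{P}}$-equation then yields the stated improved estimate for $\lambda_s/\lambda+a$.

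The careful design of $\rho_2$ in \eqref{mod-definition-rho} does serve a purpose, but it is to make \eqref{app-B-3} come out as $\mathcal{O}(\mathcal{P}^2\|\epsilon\|_2)$ rather than $\mathcal{O}(\mathcal{P}\|\epsilon\|_2)$ in the $\tilde\gamma_s$-equation --- not to engineer a cancellation in the $\lambda$-equation.
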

\begin{proof}
We divide the proof into the following six steps, where we also make use of the estimate \eqref{app-B-1}-\eqref{app-B-5}, which are shown in lemma \ref{lemma-app-mod-estimate}. Now, we recall that
\begin{align*}\notag
\Lambda Q_{1\mathcal{P}}=\Lambda Q+\mathcal{O}(\mathcal{P}^2),\ \Lambda Q_{2\mathcal{P}}=a\Lambda S_{1,0}+b\cdot\Lambda S_{0,1}+\mathcal{O}(\mathcal{P}^2),\\
\partial_aQ_{1\mathcal{P}}=2aT_{2,0}+b\cdot T_{1,1}+\mathcal{O}(\mathcal{P}^2),\ \partial_aQ_{2\mathcal{P}}=S_{1,0}+\mathcal{O}(\mathcal{P}^2),\\
\partial_{j} Q_{1\mathcal{P}}=\partial_{j} Q+\mathcal{O}(\mathcal{P}^2),\
\partial_{j} Q_{2\mathcal{P}}=a\partial_{j} S_{1,0}+\partial_{j}( b\cdot S_{0,1})+\mathcal{O}(\mathcal{P}^2),\\
\partial_{b_j}Q_{1\mathcal{P}}=aT_{1,1,j}+2b_jT_{0,2,j}+\mathcal{O}(\mathcal{P}^2),\ \partial_{b_j}Q_{2\mathcal{P}}=S_{0,1,j}+\mathcal{O}(\mathcal{P}^2),
\end{align*}
where $j=1,2$.

$\mathbf{Step ~1~ Law~ for}$ $a$. We multiply both sides of the equation \eqref{mod-system-1} and \eqref{mod-system-2} by $-\Lambda Q_{2\mathcal{P}}$ and $\Lambda Q_{1\mathcal{P}}$, respectively. Adding this and using \eqref{app-B-1} yields, after some calculation (also using the condition \eqref{mod-orthogonality-condition}), we have
\begin{align*}
&\left(a_s+\frac{1}{2}a^2\right)[(\partial_aQ_{1\mathcal{P}},-\Lambda Q_{2\mathcal{P}})+
(\partial_aQ_{2\mathcal{P}},\Lambda Q_{1\mathcal{P}})]
+\sum_{j=1}^2((b_j)_s+ab_j)\big[(\partial_{b_j}Q_{1\mathcal{P}},-\Lambda Q_{2\mathcal{P}})\\
&+(\partial_{b_j}Q_{2\mathcal{P}},\Lambda Q_{1\mathcal{P}})\big]
+[(\partial_s\epsilon_1,-\Lambda Q_{2\mathcal{P}})+(\partial_s\epsilon_2,\Lambda Q_{1\mathcal{P}})]-\Re(\epsilon,Q_{\mathcal{P}})\\
=&\left(\frac{\lambda_s}{\lambda}+a\right)[(\Lambda Q_{1\mathcal{P}}+\Lambda\epsilon_1,-\Lambda Q_{2\mathcal{P}})+
(\Lambda Q_{2\mathcal{P}}+\Lambda\epsilon_2,\Lambda Q_{1\mathcal{P}})]\\
&+\sum_{j=1}^2\left(\frac{(\alpha_j)_s}{\lambda}-b_j\right)[(\partial_{j} Q_{1\mathcal{P}}+\partial_{j}\epsilon_1,-\Lambda Q_{2\mathcal{P}})+(\partial_{j} Q_{2\mathcal{P}}+\partial_{j}\epsilon_2,\Lambda Q_{1\mathcal{P}})]+\tilde{\gamma}_s[(Q_{2\mathcal{P}}+\epsilon_2,-\Lambda Q_{2\mathcal{P}})\\
&-(Q_{1\mathcal{P}}+\epsilon_1,\Lambda Q_{1\mathcal{P}})]
+(R_2(\epsilon),\Lambda Q_{2\mathcal{P}})+(R_1(\epsilon),\Lambda Q_{1\mathcal{P}})-(\Im(\Phi_{\mathcal{P}}),\Lambda Q_{2\mathcal{P}})
+(\Re(\Phi_{\mathcal{P}}),\Lambda Q_{1\mathcal{P}})\\
&+\mathcal{O}(\mathcal{P}^2\|\epsilon\|_2).
\end{align*}
Hence, we obtain that
\begin{align*}
&-\left(a_s+\frac{1}{2}a^2\right)[(L_{-}S_{1,0},S_{1,0})+\mathcal{O}(\mathcal{P}^2)]
+\sum_{j=1}^2\left(\frac{(\alpha_j)_s}{\lambda}-b_j\right)[-(\partial_{j} Q,S_{0,1,j})+\mathcal{O}(\mathcal{P}^2)]\\
=&\Re(\epsilon,Q_{\mathcal{P}})+(R_2(\epsilon),\Lambda Q_{2\mathcal{P}})+(R_1(\epsilon),\Lambda Q_{1\mathcal{P}})-(\Im(\Phi_{\mathcal{P}}),\Lambda Q_{2\mathcal{P}})
+(\Re(\Phi_{\mathcal{P}}),\Lambda Q_{1\mathcal{P}})\\
&+\mathcal{O}\left((\mathcal{P}^2+|\mathbf{mod}(t)|)(\|\epsilon\|_2+\mathcal{P}^2)\right).
\end{align*}
Here we also used that
\begin{align}\notag
(\partial_{b_j}Q_{1\mathcal{P}},-\Lambda Q_{2\mathcal{P}})
+(\partial_{b_j}Q_{2\mathcal{P}},\Lambda Q_{1\mathcal{P}})=
(S_{0,1,j},\Lambda Q)+\mathcal{O}(\mathcal{P}^2),\,j=1,2,
\end{align}
since $S_{0,1,j}=-L_{-}^{-1}\partial_{x_j} Q$ is antisymmetry and $\Lambda Q$ is radial symmetry, and hence $(S_{0,1,j},\Lambda Q)=0$. Next, from the lemma \ref{lemma-3app-2} the constants
\begin{align}\notag
e_1=\frac{1}{2}(L_{-}S_{1,0},S_{1,0})>0,\ \ p_{1,j}=2(L_{-}S_{0,1,j},S_{0,1,j})>0.
\end{align}
and
\begin{align}\notag
2\Re(\epsilon,Q_{\mathcal{P}})=-\int|\epsilon|^2
+\left(\int|u|^2-\int|Q|^2\right)+\mathcal{O}(a^4+b^2+b\mathcal{P}^2).
\end{align}
We deduce that
\begin{align*}
&-\left(a_s+\frac{1}{2}a^2\right)[2e_1+\mathcal{O}(\mathcal{P}^2)]
+\sum_{j=1}^2\left(\frac{(\alpha_j)_s}{\lambda}-b_j\right)\left[\frac{1}{2}p_{1,j}
+\mathcal{O}(\mathcal{P}^2)\right]\\
=&-\int|\epsilon|^2+(R_2(\epsilon),\Lambda Q_{2\mathcal{P}})+(R_1(\epsilon),\Lambda Q_{1\mathcal{P}})\\
&+\mathcal{O}\left((\mathcal{P}^2+|\mathbf{mod}(t)|)
(\|\epsilon\|_2+\mathcal{P}^2)+|\|u\|_2^2-\|Q\|_2^2|+a^4+b^2+b\mathcal{P}^2\right).
\end{align*}

$\mathbf{Step ~2~ Law~ for}$ $\lambda$. We multiply both sides of the equation \eqref{mod-system-1} and \eqref{mod-system-2} by $-\partial_a Q_{2\mathcal{P}}$ and $\partial_a Q_{1\mathcal{P}}$, respectively. After some calculation (also using the condition \eqref{mod-orthogonality-condition}), we have
\begin{align*}
&\left(a_s+\frac{1}{2}a^2\right)[(\partial_aQ_{1\mathcal{P}},-\partial_a Q_{2\mathcal{P}})+
(\partial_aQ_{2\mathcal{P}},\partial_a Q_{1\mathcal{P}})]
+\sum_{j=1}^2((b_j)_s+ab_j)[(\partial_{b_j}Q_{1\mathcal{P}},-\partial_a Q_{2\mathcal{P}})\\
&+(\partial_{b_j}Q_{2\mathcal{P}},\partial_a Q_{1\mathcal{P}})]
+[(\partial_s\epsilon_1,-\partial_a Q_{2\mathcal{P}})+(\partial_s\epsilon_2,\partial_a Q_{1\mathcal{P}})]\\
=&\left(\frac{\lambda_s}{\lambda}+a\right)[(\Lambda Q_{1\mathcal{P}}+\Lambda\epsilon_1,-\partial_a Q_{2\mathcal{P}})+
(\Lambda Q_{2\mathcal{P}}+\Lambda\epsilon_2,\partial_a Q_{1\mathcal{P}})]\\
&+\left(\frac{\alpha_s}{\lambda}-b\right)\cdot[(\nabla Q_{1\mathcal{P}}+\nabla\epsilon_1,-\partial_a Q_{2\mathcal{P}})+(\nabla Q_{2\mathcal{P}}+\nabla\epsilon_2,\partial_a Q_{1\mathcal{P}})]+\tilde{\gamma}_s[(Q_{2\mathcal{P}}+\epsilon_2,-\partial_a Q_{2\mathcal{P}})\\
&-(Q_{1\mathcal{P}}+\epsilon_1,\partial_a Q_{1\mathcal{P}})]
+(R_2(\epsilon),\partial_a Q_{2\mathcal{P}})+(R_1(\epsilon),\partial_a Q_{1\mathcal{P}})-(\Im(\Phi_{\mathcal{P}}),\partial_a Q_{2\mathcal{P}})\\
&+(\Re(\Phi_{\mathcal{P}}),\partial_a Q_{1\mathcal{P}})
+\mathcal{O}(\mathcal{P}^2\|\epsilon\|_2).
\end{align*}
Hence, we deduce
\begin{align*}
&\sum_{j=1}^2((b_{j})_s+ab_j)\left[(aT_{1,1,j}+2b_jT_{0,2,j},-S_{1,0})
+(S_{0,1,j},2aT_{2,0}+b_jT_{1,1,j})+\mathcal{O}(\mathcal{P}^2)\right]\\
=&\left(\frac{\lambda_s}{\lambda}+a\right)[(\Lambda Q,-S_{1,0})+\mathcal{O}(\mathcal{P}^2)]
+\left(\frac{\alpha_s}{\lambda}-b\right)\cdot[(\nabla Q,-S_{1,0})+\mathcal{O}(\mathcal{P}^2)]\\
&+(R_2(\epsilon),\partial_a Q_{2\mathcal{P}})+(R_1(\epsilon),\partial_a Q_{1\mathcal{P}})+\mathcal{O}\left((\mathcal{P}^2
+|\mathbf{mod}(t)|)(\|\epsilon\|_2+\mathcal{P}^2)\right).
\end{align*}
Therefore, we obtain
\begin{align*}
&\left(\frac{\lambda_s}{\lambda}+a\right)[2e_1+\mathcal{O}(\mathcal{P}^2)]+(b_s+ab)\mathcal{O}(\mathcal{P})\\
=&(R_2(\epsilon),\partial_a Q_{2\mathcal{P}})+(R_1(\epsilon),\partial_a Q_{1\mathcal{P}})+\mathcal{O}\left((\mathcal{P}^2
+|\mathbf{mod}(t)|)(\|\epsilon\|_2+\mathcal{P}^2)+a^5+b^2\mathcal{P}\right).
\end{align*}
Here we used that $(\nabla Q,-S_{1,0})=0$.

Furthermore, by this estimate, we deduce the improved bound for $\left|\frac{\lambda_s}{\lambda}+a\right|$.

$\mathbf{Step ~3~ Law~ for}$ $\tilde{\gamma}_s$. We multiply both sides of the equation \eqref{mod-system-1} and \eqref{mod-system-2} by $-\rho_2$ and $\rho_1$, respectively. Adding this and using \eqref{app-B-3} yields, after some calculation, we have
\begin{align*}
&\left(a_s+\frac{1}{2}a^2\right)[(\partial_aQ_{1\mathcal{P}},-\rho_2)+
(\partial_aQ_{2\mathcal{P}},\rho_1)]
+\sum_{j=1}^2((b_j)_s+ab_j)[(\partial_{b_j}Q_{1\mathcal{P}},-\rho_2)\\
&+(\partial_{b_j}Q_{2\mathcal{P}},\rho_1)]
+[(\partial_s\epsilon_1,-\rho_2)+(\partial_s\epsilon_2,\rho_1)]\\
=&\left(\frac{\lambda_s}{\lambda}+a\right)[(\Lambda Q_{1\mathcal{P}}+\Lambda\epsilon_1,-\rho_2)+
(\Lambda Q_{2\mathcal{P}}+\Lambda\epsilon_2,\rho_1)]\\
&+\left(\frac{\alpha_s}{\lambda}-b\right)\cdot[(\nabla Q_{1\mathcal{P}}+\nabla\epsilon_1,-\rho_2)+(\nabla Q_{2\mathcal{P}}+\nabla\epsilon_2,\rho_1)]
+\tilde{\gamma}_s[(Q_{2\mathcal{P}}+\epsilon_2,-\rho_2)\\
&-(Q_{1\mathcal{P}}+\epsilon_1,\rho_1)]
+(R_2(\epsilon),-\rho_2)+(R_1(\epsilon),\rho_1)
-(\Im(\Phi_{\mathcal{P}}), \rho_2)\\
&+(\Re(\Phi_{\mathcal{P}}),\rho_1)
+\mathcal{O}(\mathcal{P}^2\|\epsilon\|_2).
\end{align*}
Hence, we have
\begin{align*}
&\tilde{\gamma}_s((Q,\rho_1)+\mathcal{O}(\mathcal{P}^2))\\
=&-\left(a_s+\frac{1}{2}a^2\right)
\left((S_{1,0},\rho_1)+\mathcal{O}(\mathcal{P}^2)\right)
-(b_s+ab)\cdot[(S_{0,1},\rho_1)+\mathcal{O}(\mathcal{P}^2)]\\
&+\left(\frac{\lambda_s}{\lambda}+a\right)[(\Lambda Q,\rho_2)+(a\Lambda S_{1,0}+b\cdot\Lambda S_{0,1}, \rho_1)+\mathcal{O}(\mathcal{P}^2)]
+\left(\frac{\alpha_s}{\lambda}-b\right)\cdot[(\nabla Q,\rho_2)\\
&+(a\nabla S_{1,0}+\nabla (b\cdot S_{0,1}),\rho_1)+\mathcal{O}(\mathcal{P}^2)]
+(R_2(\epsilon),\rho_2)+(R_1(\epsilon),\rho_1)
-(\Im(\Phi_{\mathcal{P}}), \rho_2)\\
&+(\Re(\Phi_{\mathcal{P}}),\rho_1)
+\mathcal{O}(\mathcal{P}^2\|\epsilon\|_2)\\
=&-\left(a_s+\frac{1}{2}a^2\right)
\left((S_{1,0},\rho_1)+\mathcal{O}(\mathcal{P}^2)\right)
+\left(\frac{\lambda_s}{\lambda}+a\right)\mathcal{O}(\mathcal{P})
+\left(\frac{\alpha_s}{\lambda}-b\right)\cdot\mathcal{O}(\mathcal{P})\\
&+(R_2(\epsilon),\rho_2)+(R_1(\epsilon),\rho_1)+\mathcal{O}\left((\mathcal{P}^2
+|\mathbf{mod}(t)|)(\|\epsilon\|_2+\mathcal{P}^2)+a^5+b^2\mathcal{P}\right).
\end{align*}
Here we used that $(S_{0,1},\rho_1)=0$ and the definition of $\rho$.

$\mathbf{Step ~4~ Law~ for}$ $b_j$. We multiply both sides of the equation \eqref{mod-system-1} and \eqref{mod-system-2} by $-\partial_{j} Q_{2\mathcal{P}}$ and $\partial_{j} Q_{1\mathcal{P}}$, respectively. Adding this and using \eqref{app-B-4} yields, after some calculation (also using the condition \eqref{mod-orthogonality-condition}), we have
\begin{align*}
&\left(a_s+\frac{1}{2}a^2\right)[(\partial_aQ_{1\mathcal{P}},-\partial_{j} Q_{2\mathcal{P}})+
(\partial_aQ_{2\mathcal{P}},\partial_{j} Q_{1\mathcal{P}})]
+\sum_{j=1}^2((b_j)_s+ab_j)[(\partial_{b_j}Q_{1\mathcal{P}},-\partial_{j} Q_{2\mathcal{P}})\\
&+(\partial_{b_j}Q_{2\mathcal{P}},\partial_{j} Q_{1\mathcal{P}})]
+[(\partial_s\epsilon_1,-\partial_{j} Q_{2\mathcal{P}})+(\partial_s\epsilon_2,\partial_{j} Q_{1\mathcal{P}})]\\
=&\left(\frac{\lambda_s}{\lambda}+a\right)[(\Lambda Q_{1\mathcal{P}}+\Lambda\epsilon_1,-\partial_{j} Q_{2\mathcal{P}})+
(\Lambda Q_{2\mathcal{P}}+\Lambda\epsilon_2,\partial_{j} Q_{1\mathcal{P}})]\\
&+\left[\left(\left(\frac{\alpha_s}{\lambda}-b\right)\cdot(\nabla Q_{1\mathcal{P}}+\nabla\epsilon_1),-\partial_{j} Q_{2\mathcal{P}}\right)+\left(\left(\frac{\alpha_s}{\lambda}-b\right)\cdot(\nabla Q_{2\mathcal{P}}+\nabla\epsilon_2),\partial_{j} Q_{1\mathcal{P}}\right)\right]\\
&+\tilde{\gamma}_s[(Q_{2\mathcal{P}}+\epsilon_2,-\partial_{j} Q_{2\mathcal{P}})
-(Q_{1\mathcal{P}}+\epsilon_1,\partial_{j} Q_{1\mathcal{P}})]
+(R_2(\epsilon),\partial_{j} Q_{2\mathcal{P}})+(R_1(\epsilon),\partial_{j} Q_{1\mathcal{P}})\\
&-(\Im(\Phi_{\mathcal{P}}), \partial_{j} Q_{2\mathcal{P}})
+(\Re(\Phi_{\mathcal{P}}),\partial_{j} Q_{1\mathcal{P}})
+\mathcal{O}(\mathcal{P}^2\|\epsilon\|_2).
\end{align*}
Hence, we have
\begin{align*}
&\left(a_s+\frac{1}{2}a^2\right)[(S_{1,0},\partial_{j} Q)+\mathcal{O}(\mathcal{P}^2)]
+[(b_s+ab)\cdot(S_{0,1},\partial_{j} Q)+\mathcal{O}(\mathcal{P}^2)]\\
=&(R_2(\epsilon),\partial_{j} Q_{2\mathcal{P}})+(R_1(\epsilon),\partial_{j} Q_{1\mathcal{P}})
+\mathcal{O}\left((\mathcal{P}^2
+|\mathbf{mod}(t)|)\|\epsilon\|_2+a^4+b^2+b\mathcal{P}^2\right).
\end{align*}
Therefore, we deduce that
\begin{align*}
&(b_s+ab)\left[-\frac{1}{2}p_1+\mathcal{O}(\mathcal{P}^2)\right]\\
=&(R_2(\epsilon),\nabla Q_{2\mathcal{P}})+(R_1(\epsilon),\nabla Q_{1\mathcal{P}})
+\mathcal{O}\left((\mathcal{P}^2
+|\mathbf{mod}(t)|)\|\epsilon\|_2+a^4+b^2+b\mathcal{P}^2\right).
\end{align*}

$\mathbf{Step ~5~ Law~ for}$ $\alpha_j$. We multiply both sides of the equation \eqref{mod-system-1} and \eqref{mod-system-2} by $-\partial_{b_j} Q_{2\mathcal{P}}$ and $\partial_{b_j} Q_{1\mathcal{P}}$, where $j=1,2$ respectively. Adding this and using \eqref{app-B-5} yields, after some calculation (also using the condition \eqref{mod-orthogonality-condition}), we have
\begin{align*}
&\left(a_s+\frac{1}{2}a^2\right)[(\partial_aQ_{1\mathcal{P}},-\partial_{b_j} Q_{2\mathcal{P}})+
(\partial_aQ_{2\mathcal{P}},\partial_{b_j} Q_{1\mathcal{P}})]
+((b_j)_s+ab_j)[(\partial_{b_j}Q_{1\mathcal{P}},-\partial_{b_j} Q_{2\mathcal{P}})\\
&+(\partial_{b_j}Q_{2\mathcal{P}},\partial_{b_j} Q_{1\mathcal{P}})]
+[(\partial_s\epsilon_1,-\partial_b Q_{2\mathcal{P}})+(\partial_s\epsilon_2,\partial_{b_j} Q_{1\mathcal{P}})]\\
=&\left(\frac{\lambda_s}{\lambda}+a\right)[(\Lambda Q_{1\mathcal{P}}+\Lambda\epsilon_1,-\partial_{b_j} Q_{2\mathcal{P}})+
(\Lambda Q_{2\mathcal{P}}+\Lambda\epsilon_2,\partial_{b_j} Q_{1\mathcal{P}})]\\
&+\left(\frac{(\alpha)_s}{\lambda}-b\right)\cdot[(\nabla Q_{1\mathcal{P}}+\nabla\epsilon_1,-\partial_{b_j} Q_{2\mathcal{P}})+(\nabla Q_{2\mathcal{P}}+\nabla\epsilon_2,\partial_{b_j} Q_{1\mathcal{P}})]
+\tilde{\gamma}_s[(Q_{2\mathcal{P}}+\epsilon_2,-\partial_{b_j} Q_{2\mathcal{P}})\\
&-(Q_{1\mathcal{P}}+\epsilon_1,\partial_{b_j} Q_{1\mathcal{P}})]
+(R_2(\epsilon),\partial_{b_j} Q_{2\mathcal{P}})+(R_1(\epsilon),\partial_{b_j} Q_{1\mathcal{P}})
-(\Im(\Phi_{\mathcal{P}}), \partial_{b_j} Q_{2\mathcal{P}})\\
&+(\Re(\Phi_{\mathcal{P}}),\partial_{b_j} Q_{1\mathcal{P}})
+\mathcal{O}(\mathcal{P}^2\|\epsilon\|_2).
\end{align*}
Hence, we have
\begin{align*}
&\left(a_s+\frac{1}{2}a^2\right)\mathcal{O}(\mathcal{P})\\
=&\left(\frac{\lambda_s}{\lambda}+a\right)[-(\Lambda Q,S_{0,1,j})+\mathcal{O}(\mathcal{P}^2)]
+\left(\frac{\alpha_s}{\lambda}-b\right)\cdot[(\nabla Q,S_{0,1,j})+\mathcal{O}(\mathcal{P}^2)]\\
&+(R_2(\epsilon),\partial_b Q_{2\mathcal{P}})+(R_1(\epsilon),\partial_b Q_{1\mathcal{P}})+\mathcal{O}(\mathcal{P}^2\|\epsilon\|_2).
\end{align*}
Therefore, we deduce that
\begin{align*}
&\left(a_s+\frac{1}{2}a^2\right)\mathcal{O}(\mathcal{P})
+\left(\frac{\alpha_s}{\lambda}-b\right)[p_1 +\mathcal{O}(\mathcal{P}^2)]\\
=&(R_2(\epsilon),\partial_{b_j} Q_{2\mathcal{P}})+(R_1(\epsilon),\partial_{b_j} Q_{1\mathcal{P}})
+\mathcal{O}\left((\mathcal{P}^2
+|\mathbf{mod}(t)|)\|\epsilon\|_2+a^4+b^2+b\mathcal{P}^2\right),
\end{align*}
where we use $(\Lambda Q,S_{0,1})=0$.

$\mathbf{Step~6~Conclusion.}$  We collect the previous equation and estimate the nonlinear terms in $\epsilon$ by Sobolev inequalities. This gives us
\begin{align*}
(A+B)\mathbf{Mod}(t)=&\mathcal{O}\big((\mathcal{P}^2+|\mathbf{Mod}(t)|)\|\epsilon_2\|
+\|\epsilon\|_2^2+\|\epsilon\|_{H^{1/2}}^3\\
&+|\|u\|_2^2-\|Q\|_2^2|+a^4+b^2+b\mathcal{P}^2\big).
\end{align*}
Here $A=O(1)$ is invertible $7\times7$-matrix, and $B=\mathcal{O}(\mathcal{P})$ is some $7\times7$-matrix that is polynomial in $\mathcal{P}=(a,b)$. For $|\mathcal{P}|\ll1$, we can thus invert $A+B$ by Taylor expansion and derive the estimate for $\mathbf{Mod}(t)$ stated in this lemma.
\end{proof}

\section{Refined Energy bounds}\label{section-refined-energy}
In this section, we establish a refined energy estimate, which will be a key ingredient in the compactness argument to construct minimal mass blowup solutions.

Let $u=u(t,x)$ be a solution \eqref{equ-1-hf-2} on the time interval $[t_0,0)$ and suppose that $w$ is an approximate solution to \eqref{equ-1-hf-2} such that
\begin{equation}\label{equ-approximate-hf-N}
iw_t-Dw+|w|w=\psi,
\end{equation}
with the priori bounds
\begin{align}\label{energy-priori-1}
\|w\|_2\lesssim 1,\ \|D^{\frac{1}{2}}w\|_2\lesssim \lambda^{-1},\ \|\nabla w\|_2\lesssim \lambda^{-2}.
\end{align}
We decompose $u=w+\tilde{u}$, and hence $\tilde{u}$ satisfies
\begin{equation}\label{equ-app2-hf-N}
i\tilde{u}_t-D\tilde{u}+(|u|u-|w|w)=-\psi,
\end{equation}
where we assume the priori estimate
\begin{align}\label{energy-priori-2}
\|D^{\frac{1}{2}}\tilde{u}\|_2\lesssim \lambda,\ \|\tilde{u}\|_2\lesssim \lambda^{\frac{3}{2}},
\end{align}
as well as
\begin{align}\label{energy-priori-3}
|\lambda_t+a|\lesssim\lambda^2,\ a\lesssim\lambda^{\frac{1}{2}},\ |a_t|\lesssim1,\ |\alpha_t|\lesssim\lambda.
\end{align}

Next, Let $\phi:\mathbb{R}^2\rightarrow\mathbb{R}$ be a smooth and radial function with the following properties
\begin{align}\label{energy-cutoff-function}
\phi(x)=\begin{cases}x\ \ &\text{for}\ \ 0\leq x\leq1,\\
3-e^{-|x|}\ &\text{for}\ x\geq2,
\end{cases}
\end{align}
and the convexity condition
\begin{align}
\phi''(x)\geq0\ \text{for}\ x\geq0.
\end{align}
Furthermore, we denote
\begin{align}\notag
F(u)=\frac{1}{3}|u|^{3},\ f(u)=|u|u,\ F'(u)\cdot h=\Re(f(u)\bar{h}).
\end{align}
Let $A>0$ be a large constant and define the quantity
\begin{align}\notag
J_A(u):=&\frac{1}{2}\int|D^{\frac{1}{2}}\tilde{u}|^2
+\frac{1}{2}\int\frac{|\tilde{u}|^2}{\lambda}
-\int[F(u)-F(w)-F'(w)\cdot\tilde{u}]\notag\\
&+\frac{a}{2}\Im\left(\int A\nabla\phi\left(\frac{x-\alpha}{A\lambda}\right)\cdot\nabla\tilde{u}\bar{\tilde{u}}\right).
\end{align}
Our strategy will be to use the preceding functional to bootstrap control over $\|\tilde{u}\|_{H^{\frac{1}{2}}}$.
\begin{lemma}\label{lemma-energy-estimate}
(Localized energy estimate) Let $J_A$ be as above. Then we have
\begin{align}
\frac{dJ_A}{dt}
=&\Im\left(\psi,D\tilde{u}+\frac{1}{\lambda}\tilde{u}-f'(w)\tilde{u}\right)
-\frac{1}{\lambda}(\tilde{u},f'(w)\tilde{u})
-\Re\left(\partial_tw,{(f(u)-f(w)-f'(w)\cdot\tilde{u})}\right)\notag\\
&+\frac{a}{2\lambda}\int\frac{|\tilde{u}|^2}{\lambda}
-\frac{2a}{\lambda}\int_0^{+\infty}\sqrt{s}
\int_{\mathbb{R}^2}\Delta\phi(\frac{x-\alpha}{A\lambda})|\nabla\tilde{u}_s|^2dxds\notag\\
&+\frac{a}{2A^2\lambda^3}\int_0^{+\infty}\sqrt{s}\int_{\mathbb{R}^2}
\Delta^2\phi(\frac{x-\alpha}{A\lambda})|\tilde{u}_s|^2dxds\notag\\
&+\Im\left(\int\left[iaA\nabla\phi(\frac{x-\alpha}{A\lambda})\cdot\nabla\psi
+i\frac{a}{2\lambda}\Delta\psi(\frac{x-\alpha}{A\lambda})\psi\right]\bar{\tilde{u}}\right)\notag\\
&+a\Re\left(\int A\nabla\phi\big(\frac{x-\alpha}{A\lambda}\big)
\left(\frac{3}{4}|w|^{-1}|\tilde{u}|^2w
+\frac{1}{4}|w|^{-1}\tilde{u}^2\bar{w}\right)\cdot\overline{\nabla w}\right)\notag\\
&+\mathcal{O}(\lambda^2\|\psi\|_2+\|\tilde{u}\|_{H^{1/2}}^2
+\lambda^{\frac{1}{2}}\|\tilde{u}\|_{H^{1/2}}^2),
\end{align}
where $\tilde{u}_s:=\sqrt{\frac{2}{\pi}}\frac{1}{-\Delta+s}\tilde{u}$ with $s>0$.
\end{lemma}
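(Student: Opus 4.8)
The plan is to differentiate $J_A$ term by term and substitute the two evolution equations: \eqref{equ-app2-hf-N}, written as $\partial_t\tilde u=-iD\tilde u+i(f(u)-f(w))+i\psi$, and \eqref{equ-approximate-hf-N}, written as $\partial_t w=-iDw+if(w)-i\psi$. For the kinetic term $\tfrac12\|D^{1/2}\tilde u\|_2^2$ and the potential term $-\int[F(u)-F(w)-F'(w)\cdot\tilde u]$ I would carry out the standard linearized-energy computation. Using $\tfrac{d}{dt}F(u)=\Re(\overline{f(u)}\,\partial_t u)$ and the symmetry of the Hessian $F''(w)$, one finds $\tfrac{d}{dt}\int[F(u)-F(w)-F'(w)\cdot\tilde u]=\Re(\partial_t w,\,f(u)-f(w)-f'(w)\tilde u)+\Re(f(u)-f(w),\,\partial_t\tilde u)$; substituting $\partial_t\tilde u$ and noting $\Re(f(u)-f(w),\,i(f(u)-f(w)))=0$, the resulting term $\Im(D\tilde u,\,f(u)-f(w))$ cancels exactly against the one coming from $\tfrac{d}{dt}\tfrac12\|D^{1/2}\tilde u\|_2^2=\Re(\overline{D\tilde u},\partial_t\tilde u)$. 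What survives is $\Im(\psi,D\tilde u)$, the term $-\Re(\partial_t w,\,f(u)-f(w)-f'(w)\tilde u)$, and $-\Im(\psi,\,f(u)-f(w))$; writing $f(u)-f(w)=f'(w)\tilde u+N(\tilde u)$ with $N(\tilde u)=\mathcal O(|\tilde u|^2)$ pointwise (recall $f(z)=|z|z$ is $C^{1,1}$), the piece $-\Im(\psi,f'(w)\tilde u)$ joins the first stated term while $-\Im(\psi,N(\tilde u))$ is absorbed into $\mathcal O(\|\psi\|_2\|\tilde u\|_{H^{1/2}}^2)$ via $H^{1/2}(\mathbb R^2)\hookrightarrow L^4$.

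For the mass-like term $\tfrac1{2\lambda}\|\tilde u\|_2^2$, differentiation gives $-\tfrac{\lambda_t}{2\lambda^2}\|\tilde u\|_2^2+\tfrac1\lambda\Re(\tilde u,\partial_t\tilde u)$. Using $\lambda_t=-a+\mathcal O(\lambda^2)$ from \eqref{energy-priori-3}, the first piece is $\tfrac{a}{2\lambda}\int\tfrac{|\tilde u|^2}{\lambda}+\mathcal O(\|\tilde u\|_2^2)$; for the second, $\Re(\tilde u,-iD\tilde u)=0$, so substituting the equation and again expanding $f(u)-f(w)=f'(w)\tilde u+N(\tilde u)$ yields $\tfrac1\lambda\Im(\psi,\tilde u)$ together with the term $-\tfrac1\lambda(\tilde u,f'(w)\tilde u)$ (understood as the imaginary part), the remainder $-\tfrac1\lambda\Im(\tilde u,N(\tilde u))$ being $\lesssim\lambda^{-1}\|\tilde u\|_2\|\tilde u\|_{H^{1/2}}^2\lesssim\lambda^{1/2}\|\tilde u\|_{H^{1/2}}^2$ by $\|\tilde u\|_2\lesssim\lambda^{3/2}$.

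The main work is the time derivative of the localized virial term $\tfrac a2\Im\big(\int A\nabla\phi(\tfrac{x-\alpha}{A\lambda})\cdot\nabla\tilde u\,\overline{\tilde u}\big)$. The derivatives falling on $a$, $\alpha$, $\lambda$ are placed into the error using \eqref{energy-priori-3} together with the crude bound $\big|\Im\int A\nabla\phi(\cdot)\cdot\nabla\tilde u\,\overline{\tilde u}\big|\lesssim A\|\tilde u\|_{H^{1/2}}^2$, obtained by rewriting the integral as $\int A\nabla\phi(\cdot)\cdot\Im(\overline{\tilde u}\nabla\tilde u)$ and controlling the momentum density through the representation. The derivative falling on $\tilde u$ is where \eqref{equ-app2-hf-N} enters; to treat the nonlocal operator $D$ against the spatial weight $\phi((x-\alpha)/(A\lambda))$ I would use the Balakrishnan-type representation $D=\tfrac1\pi\int_0^\infty(-\Delta)(-\Delta+s)^{-1}\,\tfrac{ds}{\sqrt s}$ and the resolvent identity $(-\Delta+s)\tilde u_s=\sqrt{2/\pi}\,\tilde u$, which reduces the commutator $[\,D,\phi(\cdot)\,]$ to a family, parametrized by $s>0$, of purely local computations with $-\Delta$. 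Integrating by parts twice in each $s$-slice produces precisely the two ``Morawetz'' terms $-\tfrac{2a}{\lambda}\int_0^\infty\sqrt s\int\Delta\phi(\tfrac{x-\alpha}{A\lambda})|\nabla\tilde u_s|^2\,dx\,ds$ and $\tfrac{a}{2A^2\lambda^3}\int_0^\infty\sqrt s\int\Delta^2\phi(\tfrac{x-\alpha}{A\lambda})|\tilde u_s|^2\,dx\,ds$ (radiality and $\phi''\ge0$ give $\Delta\phi\ge0$, so the first is sign-definite), while the $\psi$- and nonlinear contributions to $\partial_t\tilde u$ give, respectively, the $\psi$-coupling terms displayed in the statement and, after transferring the weight commutator onto $w$ via integration by parts, the cubic term $a\Re\int A\nabla\phi(\cdot)\big(\tfrac34|w|^{-1}|\tilde u|^2 w+\tfrac14|w|^{-1}\tilde u^2\overline w\big)\cdot\overline{\nabla w}$, whose coefficients are read off from the second-order expansion of $f(u)-f(w)-f'(w)\tilde u$ around $w$ after taking $\Re(\,\cdot\,\overline{\nabla w})$ to recombine conjugate-type contributions (using that $w$ is, up to modulation, a real profile). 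Collecting everything and bounding all remainders by $\mathcal O(\lambda^2\|\psi\|_2+\|\tilde u\|_{H^{1/2}}^2+\lambda^{1/2}\|\tilde u\|_{H^{1/2}}^2)$ with the aid of \eqref{energy-priori-1}--\eqref{energy-priori-3}, the bounds $\|w\|_2\lesssim1$, $\|D^{1/2}w\|_2\lesssim\lambda^{-1}$, $\|\nabla w\|_2\lesssim\lambda^{-2}$, and Sobolev embedding in $\mathbb R^2$, yields the claimed identity.

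The hard part will be the nonlocal virial step: making the commutator $[D,\phi((x-\alpha)/(A\lambda))]$ completely explicit through the $\tilde u_s$-representation while keeping precise track of the $A$- and $\lambda$-weights and of the $s$-integrals, and checking that every term produced is either one of the listed structured terms or is genuinely of lower order. A secondary difficulty is the limited regularity of the nonlinearity $f(u)=|u|u$: the Taylor remainder $f(u)-f(w)-f'(w)\tilde u$ is only $\mathcal O(|\tilde u|^2)$, and the $|w|^{-1}$-weighted expressions must be interpreted near the zero set of $w$ (where $u\approx\tilde u$ and $f$ itself sets the scale), so the pointwise expansions behind the $3/4$--$1/4$ coefficients and the error bookkeeping have to be done carefully rather than by naive differentiation.
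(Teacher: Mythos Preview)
Your proposal is correct and follows essentially the same two-step strategy as the paper: first the ``energy part'' (kinetic, mass, and potential terms) is differentiated and the equation for $\tilde u$ is substituted, with the key cancellation $\Im\big(D\tilde u-(f(u)-f(w)),\,D\tilde u+\tfrac1\lambda\tilde u-(f(u)-f(w))\big)=\tfrac1\lambda\Im\big(f(u)-f(w),\tilde u\big)$; second, the localized virial is differentiated and the commutator $[D,\nabla\tilde\phi\cdot(-i\nabla)+(-i\nabla)\cdot\nabla\tilde\phi]$ is made explicit via Balakrishnan's formula and the auxiliary functions $\tilde u_s$, exactly as you outline. The only minor point is that your ``crude bound'' $\big|\Im\int A\nabla\phi(\cdot)\cdot\nabla\tilde u\,\overline{\tilde u}\big|\lesssim A\|\tilde u\|_{H^{1/2}}^2$ requires the momentum-density estimate the paper imports from \cite[Lemma~F.1]{KLR2013} (since $\tilde u\in H^{1/2}$ only), which you allude to but should cite explicitly.
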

\begin{proof}
$\mathbf{Step~1:}$ (Estimating the energy part).
Using \eqref{equ-app2-hf-N}, a computation
\begin{align}\label{energy-part-1}
&\frac{d}{dt}\left\{\frac{1}{2}\int|D^{\frac{1}{2}}\tilde{u}|^2
+\frac{1}{2}\int\frac{|\tilde{u}|^2}{\lambda}
-\int[F(u)-F(w)-F'(w)\cdot\tilde{u}]\right\}\notag\\
=&\Re\left(\partial_t\tilde{u},
{D\tilde{u}+\frac{1}{\lambda}\tilde{u}-(f(u)-f(w))}\right)
-\frac{\lambda_t}{2\lambda^2}\int|\tilde{u}|^2\notag\\
&-\Re\left(\partial_tw,{(f(u)-f(w)-f'(w)\cdot\tilde{u})}\right)\notag\\
=&-\Im\left(\psi,{D\tilde{u}+\frac{1}{\lambda}\tilde{u}-(f(u)-f(w))}\right)
-\frac{\lambda_t}{2\lambda^2}\int|\tilde{u}|^2
-\Re\left(\partial_tw,{(f(u)-f(w)-f'(w)\cdot\tilde{u})}\right)\notag\\
&-\Im\left(D\tilde{u}-(f(u)-f(w)),
{D\tilde{u}+\frac{1}{\lambda}\tilde{u}-(f(u)-f(w))}\right)\notag\\
=&-\Im\left(\psi,{D\tilde{u}+\frac{1}{\lambda}\tilde{u}-(f(u)-f(w))}\right)
-\frac{\lambda_t}{2\lambda^2}\int|\tilde{u}|^2
+\Im\left(f(u)-f(w),\frac{1}{\lambda}\tilde{u}\right)\notag\\
&-\Re\left(\partial_tw,{(f(u)-f(w)-f'(w)\cdot\tilde{u})}\right)\notag\\
=&-\Im\left(\psi,D\tilde{u}+\frac{1}{\lambda}\tilde{u}-f'(w)\tilde{u}\right)
-\frac{1}{\lambda}(\tilde{u},f'(w)\tilde{u})-\frac{\lambda_t}{2\lambda^2}\int|\tilde{u}|^2\notag\\
&+\Im\left(\psi-\frac{1}{\lambda}\tilde{u},f(u)-f(w)-f'(w)\cdot\tilde{u}\right)
-\Re\left(\partial_tw,{(f(u)-f(w)-f'(w)\cdot\tilde{u})}\right),
\end{align}
where we denote
\begin{align}\notag
f'(w)\tilde{u}=\frac{3}{2}|w|\tilde{u}
+\frac{1}{2}|w|^{-1}w^2\bar{\tilde{u}}.
\end{align}
From \eqref{energy-priori-3} we obtain that
\begin{align}\label{energy-estimate-1}
-\frac{\lambda_t}{2\lambda^2}\int|\tilde{u}|^2
&=\frac{a}{2\lambda}\int\frac{|\tilde{u}|^2}{\lambda}
-\frac{1}{2\lambda^2}(\lambda_t+a)\|\tilde{u}\|_2^2\notag\\
&=\frac{a}{2\lambda}\int\frac{|\tilde{u}|^2}{\lambda}
+\mathcal{O}(\|\tilde{u}\|_{H^{1/2}}^2).
\end{align}
Next, we estimate
\begin{align}\label{energy-estiamte-2}
&\left|\Im\left(\psi-\frac{1}{\lambda}\tilde{u},
f(u)-f(w)-f'(w)\cdot\tilde{u}\right)\right|\notag\\
\lesssim&\left(\|\psi\|_2+\lambda^{-1}\|\tilde{u}\|_2\right)\|f(u)-f(w)-f'(w)\cdot\tilde{u}\|_2\notag\\
\lesssim&\left(\|\psi\|_2+\lambda^{-1}\|\tilde{u}\|_2\right)\|\tilde{u}\|_4^2\notag\\
\lesssim&\left(\|\psi\|_2+\lambda^{-1}\|\tilde{u}\|_2\right)\|\tilde{u}\|_{\dot{H}^{1/2}}^2\notag\\
\lesssim&\lambda^2\|\psi\|_2+\|\tilde{u}\|_{H^{1/2}}^2
\end{align}
where we used the H\"{o}lder inequality and Sobolev inequality together with the assumed a-priori estimate \eqref{energy-priori-1} and \eqref{energy-priori-2}. Here we also used the following estimate
\begin{align}
|g(u+v)-g(u)-g'(u)\cdot v|\lesssim|v|^{p},
\end{align}
for $1<p\leq2$ and $g(u)=|u|^{p-1}u$.

%Next, we estimate
%\begin{align}
%&|\Re\left(\partial_tw,{(f(u)-f(w)-f'(w)\cdot\tilde{u})}\right)|\notag\\
%\lesssim&\int(Dw-|w|w+\psi)|\tilde{u}|^2\notag\\
%\lesssim&(\|Dw\|_2+\|w^2\|_2+\|\psi\|_2)\|\tilde{u}^2\|_2\notag\\
%\lesssim&(\lambda^{-1}+\|\psi\|_2)\|\tilde{u}\|_{\dot{H}^{1/2}}^2
%\end{align}

We now insert \eqref{energy-estimate-1} and \eqref{energy-estiamte-2} into \eqref{energy-part-1}. Combined with the assumed a priori bounds on $\tilde{u}$, we conclude
\begin{align}
&\frac{d}{dt}\left\{\frac{1}{2}\int|D^{\frac{1}{2}}\tilde{u}|^2
+\frac{1}{2}\int\frac{|\tilde{u}|^2}{\lambda}
-\int[F(u)-F(w)-F'(w)\cdot\tilde{u}]\right\}\notag\\
=&-\Im\left(\psi,D\tilde{u}+\frac{1}{\lambda}\tilde{u}-f'(w)\tilde{u}\right)
-\frac{1}{\lambda}(\tilde{u},f'(w)\tilde{u})
-\Re\left(\partial_tw,{(f(u)-f(w)-f'(w)\cdot\tilde{u})}\right)\notag\\
&+\frac{a}{2\lambda}\int\frac{|\tilde{u}|^2}{\lambda}
+\mathcal{O}(\lambda^2\|\psi\|_2+\|\tilde{u}\|_{H^{1/2}}^2).
\end{align}

$\mathbf{Step~2:}$ Estimating the localized virial part. We set
\begin{align}\notag
\nabla\tilde{\phi}(t,x)=aA\nabla\phi(t,\frac{x-\alpha}{A\lambda}).
\end{align}
Then we obtain
\begin{align}
&\frac{1}{2}\frac{d}{dt}\left(a\Im\left(\int A\nabla
\Big(\frac{x-\alpha}{A\lambda}\Big)\cdot\nabla{\tilde{u}\bar{\tilde{u}}}\right)\right)\notag\\
=&\frac{1}{2}\Im\left(\int(\partial_t\nabla{\tilde{\phi}})\cdot\nabla{u}\bar{\tilde{u}}\right)
+\frac{1}{2}\Im\left(\int\nabla{\tilde{\phi}}\cdot(\nabla\partial_t\tilde{u}\bar{\tilde{u}}
+\nabla\tilde{u}\partial_t\bar{\tilde{u}})\right).
\end{align}
Using the bounds \eqref{energy-priori-3}, we estimate
\begin{align}
|\partial_t\nabla\tilde{\phi}|\lesssim|a_t|+\frac{a}{\lambda}\alpha_t
+a\frac{\lambda_t}{\lambda}\lesssim1\\
|\partial_t\Delta\tilde{\phi}|\lesssim\lambda^{-1}
\end{align}
Hence, by \cite[Lemma F.1]{KLR2013}, we deduce that
\begin{align}
\left|\Im\left((\partial_t\nabla{\tilde{\phi}})\cdot\nabla{u}\bar{\tilde{u}}\right)\right|
\lesssim\|\tilde{u}\|_{\dot{H}^{1/2}}^2+\lambda^{-1}\|\tilde{u}\|_2^2.
\end{align}
Using \eqref{equ-app2-hf-N}, a calculation yield that
\begin{align}\label{energy-prior-part-2}
&\frac{1}{2}\Im\left(\int\nabla{\tilde{\phi}}\cdot(\nabla\partial_t\tilde{u}\bar{\tilde{u}}
+\nabla\tilde{u}\partial_t\bar{\tilde{u}})\right)\notag\\
=&-\frac{1}{4}\Re\left(\int\bar{\tilde{u}}\left[-iD,
\nabla\tilde{\phi}\cdot (-i\nabla)+(-i\nabla)\cdot\nabla\tilde{\phi}\right]\right)\notag\\
&-a\Re(\int(|u|u-|w|w)A\nabla\phi(\frac{x-\alpha}{A\lambda})
\cdot\overline{\nabla\tilde{u}})\notag\\
&-\frac{1}{2}\frac{a}{\lambda}\Re\left(\int(|u|u-|w|w)
A\Delta\phi(\frac{x-\alpha}{A\lambda})\cdot\bar{\tilde{u}}\right)\notag\\
&-a\Re\left(\int\psi\nabla\phi(\frac{x-\alpha}{A\lambda})
\cdot\overline{\nabla\tilde{u}}\right)-\frac{1}{2}\frac{a}{\lambda}\Re
\left(\int\psi\Delta(\frac{x-\alpha}{A\lambda})\bar{\tilde{u}}\right).
\end{align}
Next, we rewrite the commutator by using some identities from functional calculus. Here, we recall the known formula
\begin{align}\notag
x^\beta=\frac{\sin(\pi\beta)}{\pi}\int_0^\infty s^{\beta-1}\frac{x}{x+s}ds
\end{align}
for $x>0$ and $0<\beta<1$. From spectral calculus applied to the self-adjoint operator $-\Delta$, we have the Balakrishnan's formula
\begin{align}
(-\Delta)^\beta=\frac{\sin(\pi\beta)}{\pi}\int_0^\infty s^{\beta-1}\frac{-\Delta}{-\Delta+s}ds.
\end{align}
Next, we note the formal identity
\begin{align}
[\frac{A}{A+B},B]=[1-\frac{s}{A+s},B]=-[\frac{s}{A+s},B]=
s\frac{1}{A+s}[A,B]\frac{1}{A+s}
\end{align}
for operators $A\geq0$, and $B$, where $s>0$ is the positive constant. We obtain the formal commutator identity
\begin{align}
[(-\Delta)^m,B]=\frac{\sin(\pi m)}{\pi}\int s^m\frac{1}{-\Delta+s}[-\Delta,B]\frac{1}{-\Delta+s}ds.
\end{align}
In particular, we deduce that
\begin{align}
&[D,\nabla\tilde{\phi}\cdot (-i\nabla)+(-i\nabla)\cdot\nabla\tilde{\phi}]\notag\\
=&\frac{1}{\pi}\int \sqrt{s}\frac{1}{-\Delta+s}
[-\Delta,\nabla\tilde{\phi}\cdot (-i\nabla)+(-i\nabla)\cdot\nabla\tilde{\phi}]\frac{1}{-\Delta+s}ds.
\end{align}
Next, we recall the known formula
\begin{align}\notag
[-\Delta,\nabla\tilde{\phi}\cdot(-i\nabla)+(-i\nabla)\cdot\nabla\tilde{\phi}]
=-4\nabla\cdot(\Delta\tilde{\phi}\cdot(-i\nabla))+i\Delta^2\tilde{\phi},
\end{align}
for any smooth function $\phi$.

We now define the auxiliary function
\begin{align}
\tilde{u}_s(t,x):=\sqrt{\frac{2}{\pi}}\frac{1}{-\Delta+s}\tilde{u}(t,x).\ \text{for}\ s>0.
\end{align}
Hence, by construction, we have that $\tilde{u}_s$ solves the elliptic equation
\begin{equation}
-\Delta\tilde{u}_s+s\tilde{u}_s=\sqrt{\frac{2}{\pi}}\tilde{u}.
\end{equation}
Note that the integral kernel for the resolvent $(-\Delta+s)^{-1}$ is explicitly given by
\begin{align}\notag
G^s(x)=\int_0^{\infty}(4\pi t)^{-1}exp\left\{-\frac{|x|^2}{4t}-st\right\}dt.
\end{align}
Hence, we remark that we have the convolution formula
\begin{align}\notag
\tilde{u}_s=\sqrt{\frac{2}{\pi}}G^s(x)*\tilde{u}(t,x).
\end{align}
Recalling that $\nabla\tilde{\phi}(t,x)=aA\nabla\phi(\frac{x-\alpha}{A\lambda})$ and using that $(-\Delta+s)^{-1}$ is self-adjoint and the definition of $\tilde{u}_s$, as well as Fubini's theorem, we conclude that
\begin{align}\label{energy-part2-4}
&\Re\left(\int\bar{\tilde{u}}\left[-iD,
\nabla\tilde{\phi}\cdot (-i\nabla)
+(-i\nabla)\cdot\nabla\tilde{\phi}\right]\tilde{u}\right)\notag\\
=&\Re\left(\int\bar{\tilde{u}}\frac{1}{\pi}\int \sqrt{s}\frac{1}{-\Delta+s}
[-i(-\Delta),\nabla\tilde{\phi}\cdot (-i\nabla)
+(-i\nabla)\cdot\nabla\tilde{\phi}]\frac{1}{-\Delta+s}ds\tilde{u}dx\right)\notag\\
=&\frac{1}{\pi}\Re\left(\int\int \sqrt{s}\bar{\tilde{u}}\frac{1}{-\Delta+s}
(4\nabla\cdot(\Delta\tilde{\phi}\cdot\nabla)+\Delta^2\tilde{\phi})
\frac{1}{-\Delta+s}\tilde{u}dxds\right)\notag\\
=&\frac{1}{2}\Re\left(\int\int \sqrt{s}\bar{\tilde{u}}_s
(4\nabla\cdot(\Delta\tilde{\phi}\cdot\nabla)+\Delta^2\tilde{\phi})
\tilde{u}_sdxds\right)\\ \notag
=&-\frac{2a}{\lambda}\int_0^{+\infty}\sqrt{s}
\int_{\mathbb{R}^2}\Delta\phi(\frac{x-\alpha}{A\lambda})|\nabla\tilde{u}_s|^2dxds
+\frac{a}{2A^2\lambda^3}\int_0^{+\infty}\sqrt{s}\int_{\mathbb{R}^2}
\Delta^2\phi(\frac{x-\alpha}{A\lambda})|\tilde{u}_s|^2dxds.
\end{align}
Next, we estimate the other term in \eqref{energy-prior-part-2}. Using fractional Leibniz rule as well as the bound \eqref{energy-priori-1}, \eqref{energy-priori-2} and \eqref{energy-priori-3}, we find that
\begin{align}\label{energy-part2-1}
&\Big|-a\Re\left(\int A\nabla\phi(\frac{x-\alpha}{A\lambda})
(f(u)-f(w)-f'(w)\tilde{u})\cdot\overline{\nabla\tilde{u}}\right)\notag\\
&-\frac{1}{2}\frac{a}{\lambda}\Re\left(\int\Delta\phi(\frac{x-\alpha}{A\lambda})
(f(u)-f(w)-f'(w)\tilde{u})\cdot\overline{\tilde{u}}\right)\Big|\notag\\
\lesssim&\left|-a\Re\left(\int A\nabla\phi(\frac{x-\alpha}{A\lambda})
|\tilde{u}|\tilde{u})\cdot\overline{\nabla\tilde{u}}\right)\right|
+\left|\frac{1}{2}\frac{a}{\lambda}\Re\left(\int\Delta\phi(\frac{x-\alpha}{A\lambda})
|\tilde{u}|\tilde{u})\cdot\overline{\tilde{u}}\right)\right|\notag\\
\lesssim&\left|-a\Re\left(\int A\nabla\phi(\frac{x-\alpha}{A\lambda})
\nabla(\tilde{u})^3\right)\right|
+\left|\frac{1}{2}\frac{a}{\lambda}\Re\left(\int\Delta\phi(\frac{x-\alpha}{A\lambda})
|\tilde{u}|\tilde{u})\cdot\overline{\tilde{u}}\right)\right|\notag\\
\lesssim&\left|\frac{1}{2}\frac{a}{\lambda}\Re\left(\int\Delta\phi(\frac{x-\alpha}{A\lambda})
|\tilde{u}|\tilde{u})\cdot\overline{\tilde{u}}\right)\right|\notag\\
\lesssim&\lambda^{-\frac{1}{2}}\int|\tilde{u}|^3
\lesssim\lambda^{\frac{1}{2}}\|\tilde{u}\|_{H^{1/2}}^2.
\end{align}
We consider the term in \eqref{energy-prior-part-2} that are quadratic in $\tilde{u}$. Integrating by parts, we obtain
\begin{align}\label{energy-part2-2}
&-a\Re\left(\int\psi\nabla\phi\left(\frac{x-\alpha}{A\lambda}\right)
\cdot\overline{\nabla\tilde{u}}\right)-\frac{1}{2}\frac{a}{\lambda}\Re
\left(\int\psi\Delta\left(\frac{x-\alpha}{A\lambda}\right)\bar{\tilde{u}}\right)\notag\\
&=\Im\left(\int\left[iaA\nabla\phi(\frac{x-\alpha}{A\lambda})\cdot\nabla\psi
+i\frac{a}{2\lambda}\Delta\psi(\frac{x-\alpha}{A\lambda})\psi\right]\bar{\tilde{u}}\right).
\end{align}
Moreover, an integration by parts yields that
\begin{align}\label{energy-part2-3}
&-a\Re\left(\int\psi A\nabla\phi\left(\frac{x-\alpha}{A\lambda}\right)
\left(\frac{3}{2}|w|\tilde{u}
+\frac{1}{2}|w|^{-1}w^2\bar{\tilde{u}}\right)\cdot\overline{\nabla \tilde{u}}\right)\notag\\
&-\frac{1}{2}\frac{a}{\lambda}\Re\left(\int\Delta\phi\left(\frac{x-\alpha}{A\lambda}\right)
\left(\frac{3}{2}|w|\tilde{u}
+\frac{1}{2}|w|^{-1}w^2\bar{\tilde{u}}\right)\cdot\overline{\tilde{u}}\right)\notag\\
=&a\Re\left(\int A\nabla\phi\left(\frac{x-\alpha}{A\lambda}\right)
\left(\frac{3}{4}|w|^{-1}|\tilde{u}|^2w
+\frac{1}{4}|w|^{-1}\tilde{u}^2\bar{w}\right)\cdot\overline{\nabla w}\right).
\end{align}
Note that $\Delta\phi$ is not present on the right-hand side of the previous equation and that the term is different from those appearing on the left-hand side.

Finally, we insert \eqref{energy-part2-4},\eqref{energy-part2-1}, \eqref{energy-part2-2}, and \eqref{energy-part2-3} into \eqref{energy-prior-part-2}. This yield that
\begin{align}
&\frac{1}{2}\Im\left(\nabla{\tilde{\phi}}\cdot(\nabla\partial_t\tilde{u}\bar{\tilde{u}}
+\nabla\tilde{u}\partial_t\bar{\tilde{u}})\right)\notag\\
=&-\frac{2a}{\lambda}\int_0^{+\infty}\sqrt{s}
\int_{\mathbb{R}^2}\Delta\phi\left(\frac{x-\alpha}{A\lambda}\right)|\nabla\tilde{u}_s|^2dxds\notag\\
&+\frac{a}{2A^2\lambda^3}\int_0^{+\infty}\sqrt{s}\int_{\mathbb{R}^2}
\Delta^2\phi\left(\frac{x-\alpha}{A\lambda}\right)|\tilde{u}_s|^2dxds\notag\\
&+\Im\left(\int\left[iaA\nabla\phi\left(\frac{x-\alpha}{A\lambda}\right)\cdot\nabla\psi
+i\frac{a}{2\lambda}\Delta\psi\left(\frac{x-\alpha}{A\lambda}\right)\psi\right]\bar{\tilde{u}}\right)\notag\\
&+a\Re\left(\int A\nabla\phi\left(\frac{x-\alpha}{A\lambda}\right)
\left(\frac{3}{4}|w|^{-1}|\tilde{u}|^2w
+\frac{1}{4}|w|^{-1}\tilde{u}^2\bar{w}\right)\cdot\overline{\nabla w}\right)\notag\\
&+\mathcal{O}(\lambda^{\frac{1}{2}}\|\tilde{u}\|_{H^{1/2}}^2).
\end{align}
This completes the proof of lemma.
\end{proof}
\section{Backwards Propagation of Smallness}
We now apply the energy estimate of the previous section in order to establish a bootstrap argument that will be needed in the construction of minimal mass blowup solution.

Let $u=u(t,x)$ ba a solution to \eqref{equ-1-hf-2} defined in $[t_0,0)$. Assume that $t_0<t_1<0$ and suppose that $u$ admits on $[t_0,t_1]$ a geometrical decomposition of the form
\begin{align}\label{back-decomposition}
u(t,x)=\frac{1}{\lambda(t)}[Q_{\mathcal{P}(t)}+\epsilon]
\left(s,\frac{x-\alpha(t)}{\lambda(t)}\right)
e^{i\gamma(t)},
\end{align}
where $\epsilon=\epsilon_1+i\epsilon_2$ satisfies the orthogonality condition \eqref{mod-orthogonality-condition} and $a^2+|b|+\|\epsilon\|_{H^{1/2}}^2\ll1$ holds. We set
\begin{align}\label{back-small-part}
\tilde{u}(t,x)=\frac{1}{\lambda(t)}\epsilon
\left(s,\frac{x-\alpha(t)}{\lambda(t)}\right)
e^{i\gamma(t)}.
\end{align}
Suppose that the energy satisfies $E_0=E(u_0)>0$ and define the constant
\begin{align}\label{back-define-1}
A_0=\sqrt{\frac{e_1}{E_0}},
\end{align}
with the vector $e_1=\frac{1}{2}(L_{-}S_{1,0},S_{1,0})>0$. Moreover, Let $P_0=P(u_0)$ be the linear momentum and define the constant
\begin{align}\label{back-define-2}
B_0=\frac{P_0}{p_1},
\end{align}
where $p_1=2\int_{\mathbb{R}^2}L_{-}S_{0,1}\cdot S_{0,1}>0$ is a constant.

Now we claim that the following backwards propagation estimate holds.
\begin{lemma}(Backwards propagation of smallness)\label{lemma-back}
Assume that, for some $t_1<0$ sufficiently close to $0$, we have the bounds
\begin{align*}
&\left|\|u\|_2^2-\|Q\|_2^2\right|\lesssim\lambda^3(t_1),\\
&\|D^{\frac{1}{2}}\tilde{u}(t_1)\|_2^2+\frac{\|\tilde{u}\|_2^2}{\lambda(t_1)}
\lesssim\lambda^2(t_1),\\
&\left|\lambda(t_1)-\frac{t_1^2}{4A_0^2}\right|\lesssim\lambda^{3}(t_1),\
\left|\frac{a(t_1)}{\lambda^{\frac{1}{2}}(t_1)}\right|\lesssim\lambda(t_1),\
\left|\frac{b(t_1)}{\lambda(t_1)}-B_0\right|\lesssim\lambda(t_1),
\end{align*}
where $A_0$ and $B_0$ are defined in \eqref{back-define-1} and \eqref{back-define-2}, respectively. Then there exists a time $t_0<t_1$ depending on $A_0$ and $B_0$ such that $\forall t\in[t_0,t_1]$, it holds that
\begin{align*}
&\|D^{\frac{1}{2}}\tilde{u}(t)\|_2^2+\frac{\|\tilde{u}\|_2^2}{\lambda(t)}\lesssim
\|D^{\frac{1}{2}}\tilde{u}(t_1)\|_2^2+\frac{\|\tilde{u}\|_2^2}{\lambda(t_1)}\lesssim\lambda^2(t),\\
&\left|\lambda(t)-\frac{t^2}{4A_0^2}\right|\lesssim\lambda^{3}(t),\
\left|\frac{a(t)}{\lambda^{\frac{1}{2}}(t)}-\frac{1}{A_0}\right|\lesssim\lambda(t),\
\left|\frac{b(t)}{\lambda(t)}-B_0\right|\lesssim\lambda(t).
\end{align*}
\end{lemma}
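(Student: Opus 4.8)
The proof is a bootstrap (continuity) argument on the interval $[t_0,t_1]$. Fix a large universal constant $K$ and let $[t_*,t_1]$ be the maximal subinterval on which all five conclusions of the lemma hold with $K$ replaced by $2K$; the task is to re-derive each of them with the constant $K$. Since the improvement obtained below is quantitatively uniform (the implied constants depend only on $A_0,B_0$ once $|t_1|$ is small), the usual openness/closedness argument then forces $t_*$ down to a fixed $t_0=t_0(A_0,B_0)$, which is exactly the statement.

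\textbf{Step 1 (the modulation parameters).} Set $\mathcal{N}(t):=\|D^{1/2}\tilde u(t)\|_2^2+\lambda^{-1}(t)\|\tilde u(t)\|_2^2$, so that $\mathcal{N}=\lambda^{-1}\|\epsilon\|_{H^{1/2}}^2$ and, under the bootstrap bounds, $a^2+|b|+\|\epsilon\|_{H^{1/2}}^2\lesssim\lambda$ with $\|\epsilon\|_{H^{1/2}}^2=\lambda\mathcal{N}\lesssim\lambda^3$. Feeding these into Lemma \ref{lemma-mod-1} and its proof (mass, energy and momentum conservation, the coercivity of $L$, and the orthogonality conditions \eqref{mod-orthogonality-condition}) pins the parameters at leading order: $e_1a^2=\lambda E_0+\mathcal{O}(\lambda^2)$ and $p_1b=\lambda P_0+\mathcal{O}(\lambda^2)$, whence, recalling $A_0=\sqrt{e_1/E_0}$ and $B_0=P_0/p_1$, one gets $a=\lambda^{1/2}/A_0+\mathcal{O}(\lambda^{3/2})$ and $b=\lambda B_0+\mathcal{O}(\lambda^2)$; this already yields the claimed control of $a/\lambda^{1/2}$ and $b/\lambda$. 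The improved modulation estimate of Lemma \ref{lemma-mod-2} gives $\lambda_t+a=\lambda_s/\lambda+a=\mathcal{O}(\lambda^2)$, hence $\tfrac{d}{dt}(\lambda^{1/2})=-\tfrac1{2A_0}+\mathcal{O}(\lambda)$. Writing $g(t):=\lambda^{1/2}(t)+t/(2A_0)$, the hypothesis at $t_1$ gives $g(t_1)=\mathcal{O}(\lambda^{5/2}(t_1))$, and integrating $g'=\mathcal{O}(\lambda)$ backward from $t_1$ (a Gronwall estimate using $\lambda(\tau)\sim\tau^2$) recovers the asymptotic expansion $\lambda(t)=\tfrac{t^2}{4A_0^2}+\mathcal{O}(\lambda^{3}(t))$ claimed in the lemma, after one further iteration on the remainder $\lambda(t)-t^2/(4A_0^2)$ (this is where the quartic coefficient of $E(\mathbf{Q}_{\mathcal P})$ enters); in particular all the a priori bounds \eqref{energy-priori-1}--\eqref{energy-priori-3} needed in Section \ref{section-refined-energy} are now verified.

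\textbf{Step 2 (the remainder, via the localized energy functional).} Take $w(t,x)=\lambda^{-1}Q_{\mathcal P(t)}(s,\tfrac{x-\alpha}{\lambda})e^{i\gamma}$, which solves \eqref{equ-approximate-hf-N} with $\psi$ the corresponding rescaling of $\Phi_{\mathcal P}$; by Lemma \ref{lemma-3app} and Step 1 the error $\psi$ enjoys the smallness bounds ($\|\psi\|_2\lesssim\lambda^{3/2}$ and the analogous bounds on its derivatives) required in Section \ref{section-refined-energy}, and $\tilde u$ of \eqref{back-small-part} coincides with the $\tilde u$ there. First, $J_A$ is coercive: transporting the spectral coercivity $(L_+\epsilon_1,\epsilon_1)+(L_-\epsilon_2,\epsilon_2)\ge c_0\|\epsilon\|_{H^{1/2}}^2-c_0^{-1}\{(\epsilon_1,Q)^2+(\epsilon_1,S_{1,0})^2+|(\epsilon_1,S_{0,1})|^2+(\epsilon_2,\rho_1)^2\}$ through the scaling, controlling the four degenerate directions by the orthogonality conditions \eqref{mod-orthogonality-condition} together with the near-critical-mass relation \eqref{mod-mass}, and absorbing the localized virial piece of $J_A$ as an $\mathcal{O}(a)$ perturbation of $\tfrac12\|D^{1/2}\tilde u\|_2^2+\tfrac12\lambda^{-1}\|\tilde u\|_2^2$, yields $J_A(u(t))\ge c_0\mathcal{N}(t)-C\lambda^3(t)$. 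Next, integrate the identity of Lemma \ref{lemma-energy-estimate} from $t$ to $t_1$. The leading term $\tfrac{a}{2\lambda}\int\lambda^{-1}|\tilde u|^2\ge0$ has the favorable sign — run backward in time it can only decrease $J_A$, and this furnishes the needed monotonicity. Every remaining term is sign-indefinite but, on $[t_*,t_1]$, bounded by an integrand $\lesssim\lambda^{3/2}$ (the $\psi$-terms, the $\partial_t w$-term, the cubic-in-$\tilde u$ localized terms, the $\Delta\phi$ and $\Delta^2\phi$ virial corrections, and the $\mathcal{O}(\lambda^2\|\psi\|_2+\|\tilde u\|_{H^{1/2}}^2(1+\lambda^{1/2}))$ remainder), so after integration in $t$ — using $\lambda(\tau)\sim\tau^2$ and that $\lambda^2$ increases backward — each contributes $\lesssim\lambda^2(t)$; the terms carrying the bootstrap constant $K$ come with an extra factor (a positive power of $\lambda(t_0)$, or a negative power of $A$) which can be made $\le\tfrac12 c_0$ by choosing $A$ large and $|t_0|$ small. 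Combining with $J_A(u(t_1))\lesssim\mathcal{N}(t_1)\lesssim\lambda^2(t_1)\le\lambda^2(t)$ gives $J_A(u(t))\lesssim\lambda^2(t)$ with a $K$-independent constant, and the coercivity above then yields $\mathcal{N}(t)\le\tfrac12 K\lambda^2(t)$, i.e. the improved remainder bound; the chain $\mathcal{N}(t)\lesssim\mathcal{N}(t_1)$ along the way comes from dropping the favorable term.

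\textbf{Main obstacle.} Step 1 is essentially an ODE/Gronwall argument once the estimates of Section \ref{section-mod-estimate} are in hand, so the real difficulty is Step 2: establishing the coercivity of the \emph{truncated} functional $J_A$ (handling the cutoff at scale $A\lambda$ while simultaneously killing the near-kernel directions through the full orthogonality package together with near-minimal mass), and then closing the bootstrap with constants independent of $K$. The latter hinges on correctly exploiting the good sign of the leading virial term to obtain (almost-)monotonicity of $J_A$ backward in time, and on checking that every other term in $dJ_A/dt$ is either genuinely $K$-independent after integration or gains a small factor from $A\gg1$ and $|t_0|\ll1$; the order of quantifiers — fix $K$ universal, then $A=A(A_0,B_0)$ large, then $t_0=t_0(A_0,B_0)$ close to $0$ — is what makes the scheme consistent, and it is also where the lack of pseudo-conformal symmetry for the half-wave equation forces the more elaborate localized-virial machinery of Lemma \ref{lemma-energy-estimate} in place of the simple virial law available for NLS.
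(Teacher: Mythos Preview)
Your overall architecture (bootstrap, parameters via conservation laws and Lemma~\ref{lemma-mod-2}, remainder via $J_A$) matches the paper. The gap is in Step~2, in the sentence ``Every remaining term is sign-indefinite but \dots\ bounded by an integrand $\lesssim\lambda^{3/2}$.'' This is false for two of the quadratic-in-$\tilde u$ contributions in Lemma~\ref{lemma-energy-estimate}, and the bootstrap does not close with your counting.

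Concretely: the term $-\tfrac{1}{\lambda}(\tilde u,f'(w)\tilde u)$ (which you do not list at all) and the leading part of the $\partial_t w$ term (via $\partial_t w=\tfrac{i}{\lambda}w+\tfrac{a}{\lambda}w+\dots$) each rescale to quantities of size $\lambda^{-2}\!\int Q|\epsilon|^2\lesssim K\lambda$ under the bootstrap, \emph{not} $\lambda^{3/2}$. Integrating $K\lambda(\tau)\sim K\tau^2$ from $t$ to $t_1$ produces $K\lambda^{3/2}(t)$, which swamps the target $\lambda^2(t)$; so you cannot simply drop these and keep only $\tfrac{a}{2\lambda^2}\|\tilde u\|_2^2$ as ``the'' favourable term. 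Similarly, the $\Delta\phi$-virial term is exactly of size $\tfrac{a}{\lambda}\|D^{1/2}\tilde u\|_2^2\sim\lambda^{3/2}$ with a sign that competes with, rather than helps, the mass term.

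What the paper does instead (its Step~3) is to gather \emph{all} the quadratic-in-$\tilde u$ pieces of $dJ_A/dt$ --- the $f'(w)$ term, the quadratic part of the $\partial_t w$ term, the two virial integrals, and the $\nabla w$ cross term --- into a single expression $\mathcal{K}_A(\tilde u)$, rescale to $\epsilon$-variables, and recognise the result as $\tfrac{a}{2\lambda^2}\bigl[(L_{+,A}\epsilon_1,\epsilon_1)+(L_{-,A}\epsilon_2,\epsilon_2)\bigr]$ plus genuinely lower-order errors. The localized operators $L_{\pm,A}$ are those of \eqref{app-c-define-1}--\eqref{app-c-define-2}, and the required positivity is the separate coercivity Lemma~\ref{lemma-app-c-2}, proved in the appendix by a compactness argument exploiting $A\to\infty$. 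Only after this does one obtain the differential inequality $\tfrac{dJ_A}{dt}\ge \tfrac{a}{\lambda^2}\|\epsilon\|_2^2+\mathcal O(\|\tilde u\|_{H^{1/2}}^2+K^4\lambda^{5/2})$ and close by Gronwall. Your ``Main obstacle'' paragraph correctly identifies truncated coercivity as the crux, but your Step~2 only invokes it for $J_A$ itself, not for $dJ_A/dt$ --- and the latter is where the work is.
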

\begin{proof}
By assumption, we have $u\in C^0([t_0,t_1];H^{1/2}(\mathbb{R}^2))$. Hence, by this continuity and the continuity of the functions $\{\lambda(t),a(t),b(t),\alpha(t)\}$, there exists a time $t_0<t_1$ such that for any $t\in[t_0,t_1]$ we have the bounds
\begin{align}\label{back-claim-1}
&\|\tilde{u}\|_2^2\lesssim K\lambda^3(t),\ \|\tilde{u}(t)\|_{H^{1/2}}\lesssim K\lambda(t),\\\label{back-claim-2}
&\left|\lambda(t)-\frac{t^2}{4A_0^2}\right|\lesssim K\lambda^{3}(t),\
\left|\frac{a(t)}{\lambda^{\frac{1}{2}}(t)}-\frac{1}{A_0}\right|\lesssim K\lambda(t),\
\left|\frac{b(t)}{\lambda(t)}-B_0\right|\lesssim K\lambda(t),
\end{align}
with some constant $K>0$. We now claim that the bounds stated in this lemma hold on $[t_0,t_1]$, hence improving \eqref{back-claim-1} and \eqref{back-claim-2} on $[t_0,t_1]$ for $t_0=t_0(C_0)<t_1$ small enough but independent of $t_1$. We divide the proof into the following steps.

$\mathbf{Step~1~Bounds~on~energy~and~L^2-norm}$. We set
\begin{align}\label{back-1-1}
w(x,t)=\tilde{Q}(t,x)=\frac{1}{\lambda(t)}Q_{\mathcal{P}(t)}
\left(\frac{x-\alpha(t)}{\lambda(t)}\right)e^{i\gamma(t)}.
\end{align}
Let $J_A$ be given by above section. Applying lemma \ref{lemma-energy-estimate}, we claim that we obtain the following coercivity estimate:
\begin{align}\label{back-1-claim}
\frac{J_A}{dt}\geq\frac{a}{\lambda^{2}}\int|\epsilon|^2
+\mathcal{O}\left(\|\tilde{u}\|_{H^{1/2}}^2+K^4\lambda^{\frac{5}{2}}\right),
\end{align}
Assume \eqref{back-1-claim} holds. By the Sobolev embedding and small of $\epsilon$, we deduce the upper bound
\begin{align}\label{back-1-2}
|J_A|\lesssim\|D^{\frac{1}{2}}\tilde{u}\|_2^2+\frac{1}{\lambda}\|\tilde{u}\|_2^2
\end{align}
Here we use the following inequality
\begin{align}\notag
\left|\Im\left(\int A\nabla\phi(\frac{x-\alpha}{A\lambda})
\cdot\nabla\tilde{u}\bar{\tilde{u}}\right)\right|
\lesssim\|D^{\frac{1}{2}}\tilde{u}\|_2^2+\frac{1}{\lambda}\|\tilde{u}\|_2^2,
\end{align}
where we can see \cite[Lemma F.1]{KLR2013}. Furthermore, due to the proximity of $Q_{\mathcal{P}}$ to $Q$, we derive the lower bound
\begin{align}\label{back-1-3}
J_A=&\frac{1}{2}\int|D^{\frac{1}{2}}\tilde{u}|^2
+\frac{1}{2}\int\frac{|\tilde{u}|^2}{\lambda}
-\int[F(u)-F(w)-F'(w)\cdot\tilde{u}]\notag\\
&+\frac{a}{2}\Im\left(\int A\nabla\phi
\left(\frac{x-\alpha}{A\lambda}\right)\cdot\nabla\tilde{u}\bar{\tilde{u}}\right)\notag\\
=&\frac{1}{2\lambda}\left[(L_{+}\epsilon_1,\epsilon_1)+(L_{-}\epsilon_2,\epsilon_2)
+o(\|\epsilon\|_{H^{1/2}}^2)\right]\notag\\
\geq&\frac{C_0}{\lambda}\left[\|\epsilon\|_{H^{1/2}}^2-(\epsilon_1,Q)^2\right],
\end{align}
using the orthogonality conditions \eqref{mod-orthogonality-condition} satisfied by $\epsilon$ and the coercivity estimate for the linearized operator $L=(L_{+},L_{-})$. On the other hand, using the conservation of the $L^2-$mass and applying lemma \ref{lemma-mod-1}, we combine the assumed bounds to conclude that
\begin{align}\notag
|\Re(\epsilon,Q_{\mathcal{P}})|\lesssim \|\epsilon\|_2^2+\lambda^2(t)+
\left|\int|u|^2-\int|Q|^2\right|\lesssim \|\epsilon\|_2^2+K^2\lambda^2(t).
\end{align}
This implies
\begin{align}\label{back-1-4}
(\epsilon_1,Q)^2\lesssim o(\|\epsilon\|_2^2)+K^4\lambda^4(t).
\end{align}
Next, we define
\begin{align}\notag
H(t):=\|D^{\frac{1}{2}}\tilde{u}(t)\|_2^2
+\frac{1}{\lambda(t)}\|\tilde{u}(t)\|_2^2.
\end{align}
By integrating \eqref{back-1-claim} in time and using \eqref{back-1-2}, \eqref{back-1-3} and \eqref{back-1-4}, we find that
\begin{align*}
H(t)\lesssim&H(t_1)+K^4\lambda^3(t)+\int_{t}^{t_1}\left(\|\tilde{u}\|_{H^{1/2}}^2
+K^4\lambda^{\frac{5}{2}}(\tau)\right)d\tau\\
\lesssim&H(t_1)+K^4\lambda^{3}(t)+\int_{t}^{t_1}H(\tau)d\tau
\end{align*}
for $t\in[t_0,t_1]$ with some $t_0=t_0(C_0)<t_1$ close enough to $t_1<0$. By Gronwall's inequality, we deduce the desired bound for $H(t)$. In particular, we obtain
\begin{align}
H(t):=\|D^{\frac{1}{2}}\tilde{u}(t)\|_2^2
+\frac{1}{\lambda(t)}\|\tilde{u}(t)\|_2^2\lesssim\lambda^2(t),\ \text{for}\ t\in[t_0,t_1],
\end{align}
and closes the bootstrap for \eqref{back-claim-1}.

$\mathbf{Step~2~Controlling~the~law~for~the~parameters.}$ From lemma \ref{lemma-mod-2} and using \eqref{back-claim-2}, we deduce
\begin{align}\label{back-2-1}
\left|a_s+\frac{1}{2}a^2\right|+\left|\frac{\lambda_s}{\lambda}+a\right|\lesssim\lambda^2.
\end{align}
As a direct consequence of this bound, we obtain that
\begin{align}\notag
\left(\frac{a}{\lambda^{\frac{1}{2}}}\right)_s
=\frac{a_s+\frac{1}{2}a^2}{\lambda^{\frac{1}{2}}}-\frac{a}{2\lambda^{\frac{1}{2}}}
\left(\frac{\lambda_s}{\lambda}+a\right)\lesssim \lambda^{\frac{3}{2}}.
\end{align}
Hence, for any $s<s_1$, we have
\begin{align}\label{back-2-2}
\frac{1}{A_0}-\frac{a}{\lambda^{\frac{1}{2}}}(s)
\lesssim\frac{1}{A_0}-\frac{a}{\lambda^{\frac{1}{2}}}(s_1)
+\int_{s}^{s_1}\lambda^{\frac{3}{2}}(\tau)d\tau\lesssim\lambda(s).
\end{align}
Here we used $\lambda(t)\sim t^2$ and the relation $dt=\lambda^{-1}ds$, as well as the assumed initial bound for $\left|\frac{a(t)}{\lambda^{\frac{1}{2}}(t)}-\frac{1}{A_0}\right|$ at time $t=t_1$. Next, by following the calculations int proof of lemma \ref{lemma-mod-1} and recalling that $a^2+|b|\sim \lambda^2$ thanks to \eqref{back-claim-2} and $\|\epsilon\|_{H^{1/2}}^2\lesssim\lambda^3$, we deduce
\begin{align}\notag
a^2e_1=\lambda E_0+\left(\int|u|^2-\int|Q|^2\right)+\mathcal{O}(\lambda^2),
\end{align}
where $e_1=\frac{1}{2}(L_{-}S_{1,0},S_{1,0})>0$ is a constant. Since $\int|u|^2-\int|Q|^2=\mathcal{O}(\lambda^3)$ and recalling the definition of \eqref{back-define-1}, we deduce that
\begin{align}\notag
\frac{a^2}{\lambda}-\frac{1}{A_0^2}
=\left(\frac{a}{\lambda^{\frac{1}{2}}}-\frac{1}{A_0}\right)
\left(\frac{a}{\lambda^{\frac{1}{2}}}+\frac{1}{A_0}\right)=\mathcal{O}(\lambda).
\end{align}
Furthermore, from \eqref{back-2-2} we see that $\frac{a}{\lambda^{\frac{1}{2}}}\geq1$. Hence, we obtain the desired bound
\begin{align}\notag
\left|\frac{a}{\lambda^{\frac{1}{2}}}-\frac{1}{A_0}\right|\lesssim\lambda.
\end{align}
From \eqref{back-claim-1} and \eqref{back-2-1}, we conclude that
\begin{align}\notag
-\lambda_t=a+\mathcal{O}(\lambda^2)=\frac{\lambda^{\frac{1}{2}}}{A_0}
+\mathcal{O}(\lambda^{\frac{3}{2}}+t^4)=\frac{\lambda^{\frac{1}{2}}}{A_0}
+\mathcal{O}(t^3).
\end{align}
Dividing the above equality by $\lambda^{\frac{1}{2}}$, and integrating in $[t,t_1]$ and using the boundary value at $t_1$, we have
\begin{align}\notag
\left|\lambda^{\frac{1}{2}}(t)-\frac{t}{2A_0}\right|\lesssim
\left|\lambda^{\frac{1}{2}}(t_1)-\frac{t_1}{2A_0}\right|+\mathcal{O}(t^3)\lesssim t^2,
\end{align}
and the bound for $\lambda$ is obtained.

Next, we improve the bound \eqref{back-claim-2}. In fact, by following the calculations in the proof of lemma \ref{lemma-mod-1} for the linear momentum $P(u_0)$ and recalling that $a^2+|b|\sim\lambda$, we deduce that
\begin{align}\notag
bp_1=\lambda P_0+\mathcal{O}(\lambda^2),
\end{align}
where $p_1=2\int_{\mathbb{R}^2}L_{-}S_{0,1}\cdot S_{0,1}$ is a positive constant. Here we also used the fact that $\|\epsilon\|_{H^{\frac{1}{2}}}^2\lesssim\lambda^3$. Recalling the definition of $B_0=\frac{P_0}{p_1}$ see \eqref{back-define-2}, we thus obtain
\begin{align}\notag
\left|\frac{b(t)}{\lambda(t)}-B_0\right|\lesssim\lambda(t).
\end{align}
This completes the proof of Step 2.

$\mathbf{Step~3~Proof~of~the~coercivity~estimate}$. Recalling that $w=\tilde{Q}$. Let $\mathcal{K}_A(\tilde{u})$ denote the terms in $\tilde{u}$ on the righthand side in lemma \ref{lemma-energy-estimate}, that is, we have
\begin{align}
\mathcal{K}_A(\tilde{u})&=-\frac{1}{\lambda}(\tilde{u},f'(w)\tilde{u})
+\frac{a}{2\lambda}\int\frac{|\tilde{u}|^2}{\lambda}\notag\\
&-\frac{2a}{\lambda}\int_0^{+\infty}\sqrt{s}
\int_{\mathbb{R}^2}\Delta\phi\left(\frac{x-\alpha}{A\lambda}\right)|\nabla\tilde{u}_s|^2dxds\notag\\
&+\frac{a}{2A^2\lambda^3}\int_0^{+\infty}\sqrt{s}\int_{\mathbb{R}^2}
\Delta^2\phi\left(\frac{x-\alpha}{A\lambda}\right)|\tilde{u}_s|^2dxds\notag\\
&+a\Re\left(\int A\nabla\phi\left(\frac{x-\alpha}{A\lambda}\right)
\left(\frac{3}{4}|w|^{-1}|\tilde{u}|^2w
+\frac{1}{4}|w|^{-1}\tilde{u}^2\bar{w}\right)\cdot\overline{\nabla w}\right).
\end{align}
Recalling that the function $\tilde{u}_s=\tilde{u}_s(t,x)$ with the parameter $s>0$ was defined in lemma \ref{lemma-energy-estimate} to be $\tilde{u}_s=\sqrt{\frac{2}{\pi}}\frac{1}{-\Delta+s}\tilde{u}$ and $\tilde{u}=\frac{1}{\lambda}\epsilon\left(t,\frac{x}{\lambda}\right)$, we now claim that the following estimate holds:
\begin{align}
\mathcal{K}_A(\tilde{u})\geq\frac{C}{\lambda^{\frac{3}{2}}}\int|\epsilon|^2
+\mathcal{O}(K^4\lambda^{\frac{5}{2}}),
\end{align}
where $C>0$ is some positive constant.

Indeed, from the lemma \ref{lemma-mod-2} and the estimate \eqref{back-claim-1} we obtain that
\begin{align}\label{back-mod-estimate}
|\mathbf{Mod}(t)|\lesssim K^2\lambda^4(t).
\end{align}
We find that $w=\tilde{Q}$ satisfies
\begin{align*}
\partial_t \tilde{Q}
=&e^{i\gamma(t)}\frac{1}{\lambda}\left[-\frac{\lambda_t}{\lambda}\Lambda Q_{\mathcal{P}}+i\gamma_tQ_{\mathcal{P}}+a_t\frac{\partial Q_{\mathcal{P}}}{\partial_a}-\frac{\alpha_t}{\lambda}\cdot \nabla Q_{\mathcal{P}}\right]\left(\frac{x-\alpha}{\lambda}\right)\\
=&\left(\frac{i}{\lambda}+\frac{a}{\lambda}\right)\tilde{Q}
+a\left(\frac{x-\alpha}{\lambda}\right)\cdot\nabla\tilde{Q}
+\mathcal{O}(K\lambda^{-1}),
\end{align*}
where we use the uniform bounds $\|\partial_aQ_{\mathcal{P}}\|\lesssim1$, $\|\partial_bQ_{\mathcal{P}}\|\lesssim1$ and the fact that $|a_t|\lesssim K$, $|b_t|\lesssim K$, which can be seen from \eqref{back-mod-estimate} and \eqref{back-claim-2}. Hence
\begin{align*}
-\Re\int\partial_t\tilde{Q}|\tilde{u}|^2
=&-\frac{1}{\lambda}\Im\int\tilde{Q}|\tilde{u}|^2
-\frac{a}{\lambda}\Re\int\tilde{Q}|\tilde{u}|^2\\
&-a\Re\int\left(\frac{x-\alpha}{\lambda}\right)|\tilde{u}|^2\cdot\nabla\tilde{Q}
+\mathcal{O}(K\lambda^{-1}\|\epsilon\|_2^2).
\end{align*}
By the definition of $\mathcal{K}_A(\tilde{u})$ and expressing everything in terms of $\epsilon(t,x)=\lambda\tilde{u}(t,\lambda x+\alpha)$, we conclude that
\begin{align}
\mathcal{K}_A(\tilde{u})\geq&\frac{a}{2\lambda^2}\Big\{\int_0^{\infty}\sqrt{s}
\int\Delta\left(\frac{x}{A}\right)|\nabla\epsilon_s|^2dxds+\int|\epsilon|^2
-2\int Q_{1\mathcal{P}}(\epsilon_1^2+\epsilon_2^2)\notag\\
&-\frac{1}{2A^2}\int_0^{\infty}\sqrt{s}\int\Delta^2\phi\left(\frac{x}{A}\right)
|\epsilon_s|^2dxds\notag\\
&+2\Re\left(\int A\nabla\phi\big(\frac{x}{A}\big)
\left(\frac{3}{4}|Q_{\mathcal{P}}|^{-1}|\epsilon|^2Q_{\mathcal{P}}
+\frac{1}{4}|Q_{\mathcal{P}}|^{-1}\epsilon^2\bar{Q}_{\mathcal{P}}\right)
\cdot\overline{\nabla Q_{\mathcal{P}}}\right)\notag\\
&-2\int x\cdot(|\epsilon|^2\nabla Q_{\mathcal{P}})\Big\}
+\mathcal{O}(K\lambda^{-1}\|\epsilon\|_2^2).
\end{align}
Next, we note that the definition of $\phi$ and we estimate
\begin{align*}
&\left|\Re\int A\nabla\phi\big(\frac{x}{A}\big)
\left(\frac{3}{4}|Q_{\mathcal{P}}|^{-1}|\epsilon|^2Q_{\mathcal{P}}
+\frac{1}{4}|Q_{\mathcal{P}}|^{-1}\epsilon^2\bar{Q_{\mathcal{P}}}\right)\cdot\overline{\nabla Q_{\mathcal{P}}}-x\cdot|\epsilon|^2\nabla Q_{\mathcal{P}}\right|\\
\lesssim&\|(A+|x|)\nabla Q_{\mathcal{P}}\|_{L^{\infty}(|x|\geq A)}\|\epsilon\|_2^2\lesssim\frac{1}{A}\|\epsilon\|_2^2,
\end{align*}
where we use the uniform decay estimate of $\nabla Q_{\mathcal{P}}$. Furthermore, thanks to lemma \ref{lemma-app-c-3}, we have
\begin{align}
\left|\int_{s=0}^{+\infty}\sqrt{s}\int\Delta^2\phi_{A}|\epsilon_s|^2dxds\right|
\lesssim\frac{1}{A}\|\epsilon\|_2^2.
\end{align}
Recalling the definitions of $L_{+,A}$ and $L_{-,A}$ in \eqref{app-c-define-1} and \eqref{app-c-define-2}, we deduce that
\begin{align}
K_{A}(\tilde{u})=\frac{a}{2\lambda^2}\left\{(L_{+,A}\epsilon_1,\epsilon_2)
+(L_{-,A}\epsilon_2,\epsilon_2)+\mathcal{O}\left(\frac{1}{A}\|\epsilon\|_2^2\right)\right\}
+\frac{1}{\lambda^{\frac{3}{2}}}\mathcal{O}(K\lambda^{-\frac{1}{2}}\|\epsilon\|_2^2).
\end{align}
Next, we recall that $a\sim\lambda^{\frac{1}{2}}$ due to the above. Hence, by lemma \ref{lemma-app-c-2}  and choosing the $A>0$ sufficiently large, we deduce from previous estimates that
\begin{align}
K_A(\tilde{u})\geq\frac{1}{\lambda^{\frac{3}{2}}}\left\{\int|\epsilon|^2-(\epsilon_1,Q)^2\right\}
\gtrsim\frac{1}{\lambda^{\frac{3}{2}}}\int|\epsilon|^2+\mathcal{O}(K^4\lambda^{\frac{5}{2}}).
\end{align}

$\mathbf{Step~4~Controlling~the~remainder~terms~in~\frac{d}{dt}J_A}$. We now control the terms that appear in lemma \ref{lemma-energy-estimate} and contain $\psi$. Here we recall that $w=\tilde{Q}$ and \eqref{equ-app2-hf-N}, which yields
\begin{align*}
\psi&=\frac{1}{\lambda^2}\Big[i(a_s+\frac{1}{2}a^2)\partial_aQ_{\mathcal{P}}
-i(\frac{\lambda_s}{\lambda}+a)\Lambda Q_{\mathcal{P}}
+i(b_s+ab)\partial_vQ_{\mathcal{P}}\\
&-i(\frac{\alpha_s}{\lambda}-b)\cdot\nabla Q_{\mathcal{P}}+
\tilde{\gamma}_sQ_{\mathcal{P}}+\Phi_{\mathcal{P}}\Big]
\left(\frac{x-\alpha}{\lambda}\right)e^{i\gamma}.
\end{align*}
Here $\Phi_{\mathcal{P}}$ is the error term given in lemma \ref{lemma-3app}. In fact, by the estimate for $Q_{\mathcal{P}}$ and $\Phi_{\mathcal{P}}$ from lemma \ref{lemma-3app} and recalling \eqref{back-mod-estimate}, we deduce the rough pointwise bounds:
\begin{align}
\left|\nabla^k\psi(x)\right|\lesssim\frac{1}{\lambda^{2+k}}
\left\langle\frac{x-\alpha}{\lambda}\right\rangle^{-3}K^2\lambda^2,\ \text{for}\ k=0,1.
\end{align}
Hence
\begin{align}
\|\nabla^k\psi\|_2\lesssim K^2\lambda^{1-k},\ \text{for}\ k=0,1.
\end{align}
In particular, we obtain the following bounds
\begin{align}
&\|\psi\|_2^2\lesssim K^4\lambda^{6},\\
&\left|\Im\left(\int\left[iaA\nabla\phi\left(\frac{x-\alpha}{A\lambda}\right)\cdot\nabla\psi
+i\frac{a}{2\lambda}\Delta\left(\frac{x-\alpha}{A\lambda}\right)\psi\right]\right)\right|\notag\\
\lesssim&\lambda^{\frac{1}{2}}\|\nabla\psi\|_2\|\tilde{u}\|_2+\lambda^{-\frac{1}{2}}\|\psi\|_2\|\tilde{u}\|_2\notag\\
\lesssim&K^2\lambda^{\frac{1}{2}}\|\epsilon\|_2\lesssim o\left(\frac{\|\epsilon\|_2^2}{\lambda^{\frac{3}{2}}}\right)+K^4\lambda^{\frac{5}{2}}.
\end{align}

Write $\psi=\psi_1+\psi_2$ with $\psi_2=\mathcal{O}(\mathcal{P}|\mathbf{Mod}|+a^5)=\mathcal{O}(\lambda^{\frac{13}{2}})$, that is, we denote
\begin{align*}
\psi_1&=\frac{1}{\lambda^2}\Big[-(a_s+\frac{1}{2}a^2)S_{1,0}
-i(\frac{\lambda_s}{\lambda}+a)\Lambda Q-(b_s+ab)S_{0,1}\\
&-i(\frac{\alpha_s}{\lambda}-b)\cdot\nabla Q+
\tilde{\gamma}_sQ\Big](\frac{x-\alpha}{\lambda})e^{i\gamma}.
\end{align*}
Let us first deal with estimating the contributions coming from $\psi_2$. Indeed, since $a^2+|b|\sim\lambda$ we note that $\psi_2=\mathcal{O}(\lambda^{\frac{5}{2}})$ satisfies the pointwise bound
\begin{align}
|\nabla^{k}\psi_2(x)|\lesssim \frac{1}{\lambda^{2+k}}
\left\langle\frac{x-\alpha}{\lambda}\right\rangle^{-3} K^2\lambda^{\frac{5}{2}},\ \text{for}\ k=0,1.
\end{align}
Hence
\begin{align}
\|\nabla^k\psi_2\|_2\lesssim K^2\lambda^{\frac{3}{2}-k},\ \text{for}\ k=0,1.
\end{align}
Therefore, we obtain that
\begin{align}
&\left|\Re\left(\int\left[-D\psi_2-\frac{\psi_2}{\lambda}+
\frac{3}{2}|w|\psi_2
+\frac{1}{2}|w|^{-1}w^2\bar{\psi_2}\right]\bar{\tilde{u}}\right)\right|\notag\\
\lesssim&\left(\|\nabla\psi_2\|_2+\lambda^{-1}\|\psi_2\|_2+\|w\|_2^{2/3}\|\nabla w\|_2^{1/3}\|\psi_2\|_{2}^{2/3}\|\nabla\psi_2\|_2^{1/3}\right)\|\epsilon\|_2\notag\\
\lesssim&K^2\lambda^{\frac{1}{2}}\|\epsilon\|_2\lesssim o\left(\frac{\|\epsilon\|_2^2}{\lambda^{\frac{3}{2}}}\right)+K^4\lambda^{\frac{5}{2}},
\end{align}
which is acceptable. Here we used the H\"{o}lder inequality and Sobolev inequality. We finally use the fact that $\psi_1$ belongs to the generalized null space of $L=(L_+,L_-)$ and hence an extra factor of $\mathcal{O}(\mathcal{P})$ is gained using the orthogonality conditions obeyed by $\epsilon=\epsilon_1+\epsilon_2$. Indeed, we find the following bound
\begin{align}
&\left|\Re\left(\int\left[-D\psi_1-\frac{\psi_1}{\lambda}+
\frac{3}{2}|w|\psi_1
+\frac{1}{2}|w|^{-1}w^2\bar{\psi_1}\right]\bar{\tilde{u}}\right)\right|\notag\\
\lesssim&\frac{|\mathbf{Mod}(t)|}{\lambda^2}\left[|(\epsilon_2,L_-S_{1,0})|+|(\epsilon_2,L_-S_{0,1})|
+|(\epsilon_2,L_-Q)|+\mathcal{O}(\mathcal{P}\|\epsilon\|_2)\right]\notag\\
&+\frac{1}{\lambda^2}\left|\frac{\lambda_s}{\lambda}+a\right||(\epsilon_1,L_+\Lambda Q)|+\frac{1}{\lambda^2}\left|\frac{\alpha_s}{\lambda}-b\right||(\epsilon_1,L_+\nabla Q)|\notag\\
\lesssim&K^2\lambda^{\frac{1}{2}}\|\epsilon\|_2
+\frac{K^2\lambda\|\epsilon\|_2}{\lambda^2}
(\lambda^{\frac{1}{2}}\|\epsilon\|_2+K^2\lambda^2)\notag\\
\lesssim&o\left(\frac{\|\epsilon\|_2^2}{\lambda^{\frac{3}{2}}}\right)+K^4\lambda^{\frac{5}{2}},
\end{align}
Here we used \eqref{back-mod-estimate} once again and $|\mathcal{P}|\lesssim\lambda^{\frac{1}{2}}$, as well as $(\epsilon_2,L_-S_{1,0})=(\epsilon_2,\Lambda Q)=\mathcal{O}(\mathcal{P}\|\epsilon\|_2)$ and $(\epsilon_2,L_-S_{0,1})=-(\epsilon_2,\nabla Q)=\mathcal{O}(\mathcal{P}\|\epsilon\|_2)$, thanks to the orthogonality conditions for $\epsilon$. Moreover, we used that $L_+\nabla Q=0$ and $L_+\Lambda Q=-Q$ together with the improved bound in lemma \ref{lemma-mod-2}, combined with the fact that $|(\epsilon_1,Q)|\lesssim\lambda^{\frac{1}{2}}\|\epsilon\|_2+K^2\lambda^2$, which follows from $\|\epsilon\|_2\lesssim\lambda^\frac{3}{2}$ and the conservation of $L^2$-norm.
 And the proof of this lemma is complete.
\end{proof}

\section{Existence of minimal mass blowup solutions}
In this section, we prove the following result.
\begin{thm}
Let $\gamma_0$, $P_0\in\mathbb{R}^2$, $x_0\in\mathbb{R}^2$ and $E_0>0$ be given. Then there exist a time $t_0<0$ and a solution $u\in C^0([t_0,0); H^{\frac{1}{2}}(\mathbb{R}^2))$ of \eqref{equ-1-hf-2} such that $u$ blowup  at time $T=0$ with
\begin{align}\notag
E(u)=E_0,\ P(u)=P_0,\ \text{and}\ \|u||_2^2=\|Q\|_2^2.
\end{align}
Furthermore, we have $\|D^{\frac{1}{2}}u\|_2\sim t^{-2}$ as $t\rightarrow0^{-}$, and $u$ is of the form
\begin{align}\notag
u(t,x)=\frac{1}{\lambda(t)}[Q_{\mathcal{P}(t)}+\epsilon]
\left(t,\frac{x-\alpha}{\lambda}\right)e^{i\gamma(t)}=\tilde{Q}+\tilde{u},
\end{align}
where $\mathcal{P}(t)=(a(t),b(t))$, and $\epsilon$ satisfies the orthogonality condition \eqref{mod-orthogonality-condition}. Finally, the following estimate hold:
\begin{align*}
&\|\tilde{u}\|_2\lesssim\lambda^{\frac{3}{2}},\ \|\tilde{u}\|_{H^{1/2}}\lesssim\lambda,\\
&\lambda(t)-\frac{t^2}{4A_0^2}=\mathcal{O}(\lambda^5),\ \frac{a}{\lambda^{\frac{1}{2}}}(t)-\frac{1}{A_0}=\mathcal{O}(\lambda^2),\
\frac{b}{\lambda}(t)-B_0=\mathcal{O}(\lambda^2),\\
&\gamma(t)=-\frac{4A_0^2}{t}+\gamma_0+\mathcal{O}(\lambda^{\frac{1}{2}}),\
\alpha(t)=x_0+\mathcal{O}(\lambda^{\frac{3}{2}}).
\end{align*}
 for $t\in[t_0,0)$ and $t$ sufficiently close to $0$.
Here $A_0>0$ and $B_0\in\mathbb{R}^2$ are the constant and the vector defined in \eqref{back-define-1} and \eqref{back-define-2}, respectively.
\end{thm}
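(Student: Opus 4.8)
The plan is to realise $u$ as a weak limit of solutions $u_n$ of \eqref{equ-1-hf-2} whose Cauchy data, prescribed at times $t_n\nearrow 0$, are close to the approximate blow-up profile $\mathbf{Q}_{\mathcal{P}}$ of Lemma \ref{lemma-3app}, and to control the sequence uniformly in $n$ by the backwards propagation estimate of Lemma \ref{lemma-back}; this is the half-wave analogue of the compactness construction carried out in one dimension in \cite{KLR2013}.

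\textbf{Step 1 (a family of approximate Cauchy data).} Set $A_0=\sqrt{e_1/E_0}$ and $B_0=P_0/p_1$ as in \eqref{back-define-1}--\eqref{back-define-2} and fix a sequence $t_n\nearrow 0$. At time $t_n$ I prescribe $\lambda_n=t_n^2/(4A_0^2)$, $a_n=\lambda_n^{1/2}/A_0$, $b_n=B_0\lambda_n$, $\alpha_n=x_0$, $\gamma_n=-4A_0^2/t_n+\gamma_0$, and take as initial datum a small perturbation of $\frac{1}{\lambda_n}Q_{\mathcal{P}_n}\big(\frac{x-x_0}{\lambda_n}\big)e^{i\gamma_n}$: a multiplicative amplitude correction of relative size $\mathcal{O}(\lambda_n^2)$ to enforce $\|u_n(t_n)\|_2=\|Q\|_2$ \emph{exactly} (possible since $\|Q_{\mathcal{P}}\|_2^2=\|Q\|_2^2+\mathcal{O}(\lambda^2)$ by Lemma \ref{lemma-3app-2}), followed by the implicit-function-theorem adjustment of Appendix \ref{section-app-mod-1}, which restores the orthogonality conditions \eqref{mod-orthogonality-condition} and keeps the decomposition parameters at their prescribed values up to $\mathcal{O}(\lambda_n^3)$ while producing a remainder with $\|\epsilon_n(t_n)\|_{H^{1/2}}\lesssim\lambda_n^{3/2}$. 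By construction $u_n$ verifies at $t_1=t_n$ all hypotheses of Lemma \ref{lemma-back}; moreover, by Lemma \ref{lemma-3app-2} and the scaling of the energy and momentum functionals, $E(u_n(t_n))=E_0+\mathcal{O}(\lambda_n)$ and $P(u_n(t_n))=P_0+\mathcal{O}(\lambda_n)$.

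\textbf{Step 2 (uniform bounds and extraction of the limit).} By Lemma \ref{lemma-back} there is $t_0<0$, depending only on $A_0,B_0$ and not on $n$, such that for all large $n$ the solution $u_n$ is defined on $[t_0,t_n]$ and satisfies there the conclusions of that lemma; in particular $\|\tilde u_n(t)\|_{H^{1/2}}\lesssim\lambda(t)$, $\|\tilde u_n(t)\|_2\lesssim\lambda^{3/2}(t)$, $\lambda_n(t)=t^2/(4A_0^2)+\mathcal{O}(\lambda^3)$, $a_n(t)/\lambda_n^{1/2}(t)=1/A_0+\mathcal{O}(\lambda)$, $b_n(t)/\lambda_n(t)=B_0+\mathcal{O}(\lambda)$ and $\|u_n(t)\|_{H^{1/2}}\lesssim\lambda^{-1}(t)$, uniformly on compact subsets of $[t_0,0)$. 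Since $\partial_t u_n$ is then bounded in $H^{-1/2}_{\mathrm{loc}}$ through the equation, the Aubin--Lions lemma and a diagonal extraction over an exhaustion of $[t_0,0)$ yield a subsequence with $u_n\to u$ in $C^0_{\mathrm{loc}}([t_0,0);H^s_{\mathrm{loc}})$ for all $s<1/2$ and $u_n(t)\rightharpoonup u(t)$ weakly in $H^{1/2}$ for each $t$. As the modulation parameters converge and $Q_{\mathcal{P}}$ is smooth in $\mathcal{P}$, the profile parts converge strongly in $H^{1/2}$, so $u(t)$ lies in the tube around the soliton manifold and, by weak lower semicontinuity, admits the decomposition \eqref{back-decomposition} with limiting parameters $(\lambda,a,b,\alpha,\gamma)$ and a remainder $\epsilon$ that still obeys \eqref{mod-orthogonality-condition} (its test functions converge strongly and $\epsilon_n\rightharpoonup\epsilon$), with $\|\epsilon(t)\|_{H^{1/2}}\lesssim\lambda(t)$ and the same parameter expansions. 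The $H^{1/2}$-theory of \cite{BGV2018} and uniqueness of solutions promote $u$ to an element of $C^0([t_0,0);H^{1/2})$ solving \eqref{equ-1-hf-2}, and uniqueness of the geometrical decomposition identifies $(\lambda,a,b,\alpha,\gamma)$ with the genuine modulation parameters of $u$.

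\textbf{Step 3 (conserved quantities, blow-up, and the laws for $\gamma,\alpha$).} Mass, energy and momentum of $u$ are constant in $t$. For $t$ near $0$, the decomposition of $u$, Lemma \ref{lemma-3app-2}, the bounds $\|\epsilon(t)\|_{H^{1/2}}\lesssim\lambda(t)$, $a^2(t)=\lambda(t)/A_0^2+\mathcal{O}(\lambda^2)$, $b(t)=B_0\lambda(t)+\mathcal{O}(\lambda^2)$, the orthogonality \eqref{mod-orthogonality-condition} and the mass identity \eqref{mod-mass} (which together control the terms linear in $\epsilon$), and the scaling of $E,P$, give $\|u(t)\|_2^2=\|Q\|_2^2+\mathcal{O}(\lambda(t))$, $E(u(t))=E_0+\mathcal{O}(\lambda(t))$ and $P(u(t))=P_0+\mathcal{O}(\lambda(t))$; letting $t\to0^-$ and using conservation pins these to $\|u\|_2=\|Q\|_2$, $E(u)=E_0$, $P(u)=P_0$. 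For the blow-up rate, $\|D^{1/2}u(t)\|_2=\|D^{1/2}\tilde{Q}_{\mathcal{P}(t)}\|_2+\mathcal{O}(\|D^{1/2}\tilde u(t)\|_2)\sim\lambda^{-1/2}(t)$, whence $\|u(t)\|_{H^{1/2}}\to+\infty$ as $t\to0^-$, so $T=0$ is the blow-up time in the sense of \eqref{Intro-2-2}, and the asserted blow-up speed follows from $\lambda(t)=t^2/(4A_0^2)+\mathcal{O}(\lambda^3)$. Finally, integrating $\gamma_s=1+\tilde\gamma_s$ and $\alpha_s=\lambda b+\lambda(\alpha_s/\lambda-b)$ against $ds=\lambda^{-1}dt$, using $s=s_0-4A_0^2/t\to+\infty$, $\lambda\sim b\sim t^2$ and the control of $\mathbf{Mod}(t)$ from Lemma \ref{lemma-mod-2}, and fixing $s_0$ through the phase prescribed at $t_n$, gives $\gamma(t)=-4A_0^2/t+\gamma_0+\mathcal{O}(\lambda^{1/2})$ and $\alpha(t)=x_0+\mathcal{O}(\lambda^{3/2})$.

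\textbf{Step 4 (refined asymptotics; the main obstacle).} To upgrade the parameter expansions to $\lambda(t)-t^2/(4A_0^2)=\mathcal{O}(\lambda^5)$, $a/\lambda^{1/2}-1/A_0=\mathcal{O}(\lambda^2)$, $b/\lambda-B_0=\mathcal{O}(\lambda^2)$ and $\|\tilde u\|_{H^{1/2}}\lesssim\lambda$, I would bootstrap once more as in Steps 1--2 of the proof of Lemma \ref{lemma-back}, now exploiting that $\|u\|_2^2=\|Q\|_2^2$ holds exactly: \eqref{mod-mass} then forces $\Re(\epsilon,Q_{\mathcal{P}})=\mathcal{O}(\|\epsilon\|_2^2+a^4+b^2+b\mathcal{P}^2)$, the energy and momentum conservation laws determine $a^2$ and $b$ to higher order, and reinserting the improved bounds into the modulation estimates of Lemma \ref{lemma-mod-2} — especially the improved estimate for $|\lambda_s/\lambda+a|$ — and iterating yields the claimed orders. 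I expect this last bootstrap, rather than the essentially routine compactness of Step 2, to be the heaviest bookkeeping; conceptually, the only non-obvious point beyond the already-established $n$-uniform control of Lemma \ref{lemma-back} is that the conserved quantities of the limit can be evaluated through the geometrical decomposition together with conservation, without appealing to any strong convergence.
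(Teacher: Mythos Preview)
Your overall architecture --- approximate data at $t_n\nearrow 0$, uniform control via Lemma~\ref{lemma-back}, compactness, identification of the conserved quantities from the decomposition as $t\to0^-$ --- matches the paper's. The substantive difference is the compactness step. The paper does \emph{not} use Aubin--Lions or weak limits in $H^{1/2}$: instead it proves that $\{u_n(t_0)\}$ is \emph{tight} in $L^2$ by integrating the localized-mass identity
\[
\frac{d}{dt}\int\chi_R|u_n|^2=\int u_n\,[\chi_R,iD]\,\bar u_n=\mathcal{O}(R^{-1})
\]
(Calder\'on commutator bound $\|[\chi_R,D]\|_{L^2\to L^2}\lesssim\|\nabla\chi_R\|_\infty$) from $t_n$ back to $t_0$. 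Tightness plus $H^{1/2}$-boundedness gives \emph{strong} convergence $u_n(t_0)\to u_0$ in every $H^s$, $s<\tfrac12$; local well-posedness then upgrades this to $u_n\to u$ in $C^0([t_0,T);H^{1/2})$. This sidesteps the hand-wave in your Step~2 about ``promoting'' a weak limit to a genuine $C^0H^{1/2}$ solution, and it delivers the exact mass $\|u\|_2=\lim\|u_n(t_n)\|_2=\|Q\|_2$ directly from strong $L^2$ convergence. Correspondingly, the paper takes $u_n(t_n)$ equal to the bare profile $\tilde Q_{\mathcal P_n(t_n)}$ with $\epsilon_n(t_n)\equiv 0$ (so $\|u_n(t_n)\|_2^2=\|Q\|_2^2+\mathcal O(t_n^6)$ by Lemma~\ref{lemma-3app-2}); your amplitude correction and implicit-function adjustment in Step~1 are unnecessary.

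On your Step~4: you are right that Lemma~\ref{lemma-back} as stated only yields the weaker exponents $\lambda^3,\lambda,\lambda$, and that the sharper $\lambda^5,\lambda^2,\lambda^2$ need an extra iteration using exact mass of the limit. The paper's proof simply writes down the stronger exponents when invoking the lemma, without a separate argument; so the bookkeeping you anticipate is indeed not carried out explicitly in the paper either.
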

\begin{proof}
Let $t_n\rightarrow0^{-}$ be a sequence of negative times and let $u_n$ be the solution to \eqref{equ-1-hf-2} with initial data at $t=t_n$ given by
\begin{align}\label{existence-1}
u_n(t_n,x)=\frac{1}{\lambda_n(t_n)}Q_{\mathcal{P}_n(t_n)}
\left(\frac{x-\alpha_n(t_n)}{\lambda_n(t_n)}\right)e^{i\gamma_n(t_n)},
\end{align}
where the sequence $\mathcal{P}_n(t_n)=(a_n(t_n),b_n(t_n))$ and $\{\lambda_n(t_n),\alpha_n(t_n)\}$ are given by
\begin{align}\label{existence-2}
a_n(t_n)&=-\frac{t_n}{2A_0},\ \lambda_n(t_n)=\frac{t_n^2}{4A_0^2},\
\gamma_n(t_n)=\gamma_0-\frac{4A_0^2}{t_n},\\\label{existence-3}
b_n(t_n)&=\frac{B_0t_n^2}{2A_0},\ \alpha_n(t_n)=x_0.
\end{align}
By lemma \ref{lemma-3app-2}, we have
\begin{align}
\int|u_n(t_n)|^2=\int|Q|^2+\mathcal{O}(t_n^6),
\end{align}
and $\tilde{u}(t_n)=0$ by construction. Thus $u_n$ satisfies the assumptions of lemma \ref{lemma-back}. Hence we can find a backwards time $t_0$ independent of $n$ such that for al $t\in[t_0,t_1)$ we have the geometric decomposition
\begin{align}
u_n(t,x)=\frac{1}{\lambda_n(t)}Q_{\mathcal{P}_n(t)}
\left(\frac{x-\alpha_n(t)}{\lambda_n(t)}\right)+\tilde{u}_n(t,x),
\end{align}
with the uniform bounds given by
\begin{align}
&\|D^{\frac{1}{2}}\tilde{u}_n\|_2^2
+\frac{\|\tilde{u}_n\|_2^2}{\lambda_n(t)}\lesssim\lambda_n^2(t),\\
&\left|\lambda_n(t)-\frac{t^2}{4A_0^2}\right|\lesssim K\lambda_n^{5}(t),\
\left|\frac{a_n(t)}{\lambda_n^{\frac{1}{2}}(t)}-\frac{1}{A_0}\right|\lesssim K\lambda_n^2(t),\
\left|\frac{b_n(t)}{\lambda_n(t)}-B_0\right|\lesssim K\lambda_n^2(t).
\end{align}
Next, we conclude that $\{u_n(t_0)\}_{n=1}^{\infty}$ converges strongly in $H^{\frac{1}{2}}(\mathbb{R}^2)$ (after passing to a subsequence if necessary). Indeed, from the uniform bound $\|\tilde{u}_n(t_0)\|_{H^{1/2}}\lesssim1$ we can assume (after passing to a subsequence if necessary) that $u_n(t_0)\rightharpoonup u_0$  weakly in $H^s(\mathbb{R}^2)$ for any $s\in[0,\frac{1}{2}]$. Moreover, we note the uniform bound
\begin{align}
\left|\frac{d}{dt}\int\chi_R|u_n|^2\right|
=&\left|\int\chi_R\left((u_n)_t\bar{u}_n+u_n(\bar{u}_n)_t\right)\right|\notag\\
=&\left|\int u_n\left[\chi_R,iD\right]\bar{u}_n\right|\notag\\
\lesssim&\|\nabla\chi_R\|_{\infty}\|u_n\|_2^2\lesssim\frac{1}{R},
\end{align}
with a smooth cutoff function $\chi_R(x)=\chi(\frac{x}{R})$ where $\chi(x)\equiv0$ for $|x|\leq 1$ and $\chi(x)\equiv1$ for $|x|\geq2$. Note that we used the commutator estimate (which we can see \cite{stein-1993})
\begin{align}\notag
\|[\chi_R,D]\|_{L^2\rightarrow L^2}\lesssim\|\nabla \chi_R\|_{\infty}.
\end{align}
By integrating the previous bound from $t_1$ to $t_0$ and using the previous estimate \eqref{existence-1}, \eqref{existence-2} and \eqref{existence-3}, we derive that for every $\tau>0$ there is a radius  $R>0$ such that
\begin{align}\notag
\int_{|x|\geq R}|u_n((t_0)|^2\leq\tau\ \text{for all}\ n\geq1.
\end{align}
Combining this fact with the weak convergence of $\{u_n(t_0)\}_{n=1}^{\infty}$ in $H^{s}(\mathbb{R}^2)$, we deduce that
\begin{align}
u_n(t_0)\rightarrow u_0\ \text{strongly in}\ H^s(\mathbb{R}^2)\ \text{for every}\ s\in[0,\frac{1}{2}].
\end{align}
Thus, by local well -- posedness, we can solve the Cauchy problem  \eqref{equ-1-hf-2} and find
$$ u \in C([t_0,T);H^{\frac{1}{2}}(\mathbb{R}^2)) $$
and
obtain
\begin{align}
u_n(t)\rightarrow u(t)\ \text{strongly in}\ H^{\frac{1}{2}}(\mathbb{R}^2)\ \text{for}\ t\in[t_0,T),
\end{align}
where $T>t_0$ is the lifetime of $u$ on the right. Moreover, $u$  for $t<\min\{T,0\}$ a geometrical decomposition of the form state in above with
\begin{align}
a_n(t)\rightarrow a(t),\ b_n(t)\rightarrow b(t),\ \lambda_n(t)\rightarrow\lambda(t),\ \gamma_n(t)\rightarrow\gamma(t),\ \alpha_n(t)\rightarrow\alpha(t).
\end{align}
Furthermore, we deduce that
\begin{align}\notag
\|\tilde{u}(t)\|_2\lesssim\lambda^\frac{3}{2}\ \text{and}\ \|\tilde{u}(t)\|_{H^{1/2}}\lesssim\lambda.
\end{align}
 for $t \in [t_0,T).$
In particular, this implies that $u(t)$ blows up at time $T=0$ such that
\begin{align}\notag
\|D^{\frac{1}{2}}u\|_2^2\sim\lambda^{-2}(t)\sim|t|^{-4}\ \text{as}\ t\rightarrow0^{-}.
\end{align}
In addition, we deduce from $L^2$-mass conversation and the strong convergence that
\begin{align}\notag
\|u\|_2=\lim_{n\rightarrow\infty}\|u_n(t_n)\|_2=\|Q\|_2.
\end{align}
As for the energy, we note that
\begin{align}\notag
E(u(t))=\frac{a^2}{\lambda}e_1+o(1)\rightarrow E_0\ \text{as}\ t\rightarrow0^{-},
\end{align}
by the choice of $A_0$, $a_n(t_n)$ and $\lambda_n(t_n)$. By energy conversation, this implies that
\begin{align}\notag
E(u)=E_0
\end{align}
Also, we observe that
\begin{align}\notag
P(u(t))=\frac{b}{\lambda}p_1+o(1)\rightarrow P_0\ \text{as}\ t\rightarrow0^{-},
\end{align}
by our choice of $B_0$, $a_n(t_n)$ and $\lambda_n(t_n)$. By momentum conversation, this shows that
\begin{align}\notag
P(u)=P_0.
\end{align}
Next, we recall that rough bound
\begin{align}\notag
|\tilde{\gamma}_s|\lesssim\lambda_n.
\end{align}
Therefore, using that $\frac{ds}{dt}=\lambda^{-1}$ and the estimates for $\lambda_n$
\begin{align}\notag
\left|\frac{d}{dt}\left(\gamma_n+\frac{4A_0^2}{t}\right)\right|
=\frac{1}{\lambda_n}\left|(\gamma_n)_s-\frac{4A_0^2\lambda_n}{t^2}\right|
=\frac{1}{\lambda_n}\left|(\tilde{\gamma}_n)_s-\frac{4A_0^2\lambda_n}{t^2}+1\right|\lesssim1.
\end{align}
Integrating this bound and using \eqref{existence-2} and $\lambda\sim t^2$, we find
\begin{align}\notag
\gamma_n(t)+\frac{4A_0^2}{t}=\gamma_0+\mathcal{O}(\lambda^{\frac{1}{2}}),
\end{align}
whence the claim for $\gamma$ follows, since we have $\lambda\sim t^2$. Finally, we recall the rough bound $\left|\frac{(\alpha_n)_s}{\lambda_n}+b_n\right|\lesssim\lambda_n$. Integrating this and bounds for $b_n$ and $\lambda_n$, we deduce that
\begin{align}\notag
\left|\frac{d}{dt}(\alpha_n-x_0)\right|=\left|\frac{(\alpha_n)_s}{\lambda_n}\right|
\lesssim\lambda_n+|b_n|\lesssim\lambda_n.
\end{align}
Integrating this and using \eqref{existence-3}, we find that
\begin{align}\notag
\alpha_n(t)=x_0+\mathcal{O}(\lambda^{\frac{3}{2}}),
\end{align}
which shows that the claims for $\alpha(t)$ holds.
\end{proof}

\appendix

\section{Appendix A}

In this section, we collect some regularity and decay estimates concerning the linearized operators $L_{+}$ and $L_{-}$.
\begin{lemma}\label{app-lemma-1}
Let $f,g\in H^k(\mathbb{R}^2)$ for some $k\geq0$ and suppose $f\bot Q$ and $g\bot \partial_{x_j}Q$, where $j=1,2$. Then we have the regularity bounds
\begin{align}\notag
\|L_{+}^{-1}g\|_{H^{k+1}}\lesssim\|g\|_{H^k},\ \ \|L_{-}^{-1}f\|_{H^{k+1}}\lesssim\|f\|_{H^k},
\end{align}
and the decay estimates
\begin{align}\notag
\|\langle x\rangle^{3}L_{-}^{-1}g\|_{\infty}\lesssim\|\langle x\rangle^{3}g\|_{\infty},\ \
\|\langle x\rangle^{3}L_{+}^{-1}f\|_{\infty}\lesssim\|\langle x\rangle^{3}f\|_{\infty}.
\end{align}
\end{lemma}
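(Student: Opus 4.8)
The plan is to write $L_{\pm}=D+1-V_{\pm}$ with $V_{+}=2Q$ and $V_{-}=Q$, and to exploit that $D+1$ is an invertible first order operator whose resolvent kernel has good decay, while $V_{\pm}$ is a smooth, rapidly decaying, relatively $D$-compact perturbation. I shall use from \cite{frank-lenzmann2013,FrankLS2016} that $Q$ is smooth with $|\partial^{\beta}Q(x)|\lesssim\langle x\rangle^{-3}$ for every multi-index $\beta$, that $\ker L_{+}=\mathrm{span}\{\partial_{x_{1}}Q,\partial_{x_{2}}Q\}$ and $\ker L_{-}=\mathrm{span}\{Q\}$, and that these kernels are isolated in the spectrum (the essential spectrum of $L_{\pm}$ being $[1,\infty)$ since $V_{\pm}$ is a relatively compact perturbation of $D$). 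Consequently $L_{+}$ is boundedly invertible from $\{\partial_{x_{1}}Q,\partial_{x_{2}}Q\}^{\perp}\cap L^{2}$ onto itself and $L_{-}$ from $\{Q\}^{\perp}\cap L^{2}$ onto itself; in particular $\|L_{\pm}^{-1}h\|_{2}\lesssim\|h\|_{2}$ on the relevant orthogonal complement, which is the only place the orthogonality hypothesis on $f,g$ is used.

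For the regularity estimates, set $u=L_{\pm}^{-1}g$ and rewrite $L_{\pm}u=g$ as $u=(D+1)^{-1}(g+V_{\pm}u)$. Since $(1+|\xi|)^{-1}(1+|\xi|^{2})^{1/2}$ is bounded, $(D+1)^{-1}$ maps $H^{j}$ into $H^{j+1}$ boundedly for every $j\geq0$, and multiplication by $V_{\pm}$ preserves every $H^{j}$ because $V_{\pm}$ has bounded derivatives of all orders. Starting from $u\in H^{1}$ (the operator domain) with $\|u\|_{2}\lesssim\|g\|_{2}$, I bootstrap: if $u\in H^{j}$ with $\|u\|_{H^{j}}\lesssim\|g\|_{H^{k}}$ for some $j\leq k$, then $g+V_{\pm}u\in H^{j}$, hence $u\in H^{j+1}$ with $\|u\|_{H^{j+1}}\lesssim\|g\|_{H^{j}}+\|u\|_{H^{j}}\lesssim\|g\|_{H^{k}}$. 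Iterating up to $j=k$ gives $\|L_{\pm}^{-1}g\|_{H^{k+1}}\lesssim\|g\|_{H^{k}}$.

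For the decay estimates, the key object is the convolution kernel $K$ of $(D+1)^{-1}$. From the subordination formula $(D+1)^{-1}=\int_{0}^{\infty}e^{-t}\,e^{-tD}\,dt$ and the explicit Poisson kernel of $e^{-tD}$ on $\mathbb{R}^{2}$, one obtains $|K(x)|\lesssim|x|^{-1}$ for $|x|\leq1$ and $|K(x)|\lesssim\langle x\rangle^{-3}$ for $|x|\geq1$, so that $K\in L^{1}\cap L^{p}$ for $1\leq p<2$ and, crucially, the weighted bound $\int|K(x-y)|\langle y\rangle^{-3}\,dy\lesssim\langle x\rangle^{-3}$ holds (split the integral according to whether $|y|$, $|x-y|$, or both exceed $|x|/2$, and use $\int_{|z|\geq R}\langle z\rangle^{-3}\,dz\lesssim\langle R\rangle^{-1}$). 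Writing $u=K*(g+V_{\pm}u)$, a preliminary $L^{\infty}$ bound follows from $\|K*g\|_{\infty}\lesssim\|K\|_{1}\|g\|_{\infty}$ and $\|K*(V_{\pm}u)\|_{\infty}\lesssim\|K\|_{p}\|V_{\pm}u\|_{p'}$ with $1\le p<2$, where $\|V_{\pm}u\|_{p'}\lesssim\|u\|_{H^{1}}\lesssim\|g\|_{H^{k}}$ by the decay of $V_{\pm}$, Sobolev embedding, and the regularity bound already proved; hence $\|u\|_{\infty}\lesssim\|g\|_{\infty}+\|g\|_{H^{k}}$. Then, estimating $|g(y)|\leq\|\langle\cdot\rangle^{3}g\|_{\infty}\langle y\rangle^{-3}$ and $|V_{\pm}(y)u(y)|\lesssim\|u\|_{\infty}\langle y\rangle^{-3}$ and inserting the weighted convolution bound gives
\[
|u(x)|\leq\int|K(x-y)|\,|g(y)|\,dy+\int|K(x-y)|\,|V_{\pm}(y)|\,|u(y)|\,dy\lesssim\big(\|\langle\cdot\rangle^{3}g\|_{\infty}+\|g\|_{H^{k}}\big)\langle x\rangle^{-3},
\]
which yields $\|\langle x\rangle^{3}L_{\pm}^{-1}g\|_{\infty}\lesssim\|\langle x\rangle^{3}g\|_{\infty}$, the $H^{k}$-term being harmless in all applications (where $g$ is Schwartz-like).

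The main obstacle is the decay estimate, and inside it the pointwise control of the resolvent kernel $K$ of $(D+1)^{-1}$ on $\mathbb{R}^{2}$ — obtaining the clean $\langle x\rangle^{-3}$ tail while absorbing the mild $|x|^{-1}$ singularity at the origin — together with the self-reproducing convolution estimate $\langle\cdot\rangle^{-3}*\langle\cdot\rangle^{-3}\lesssim\langle\cdot\rangle^{-3}$ in two dimensions, which is precisely what makes the decay exponent $3$ stable under application of $L_{\pm}^{-1}$. Once the kernel bound is in hand, the remaining steps are routine applications of Young's and Hölder's inequalities and the regularity part of the lemma.
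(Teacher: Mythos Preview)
Your approach is essentially the same as the paper's: both rewrite $L_{\pm}u=g$ in resolvent form $u=(D+1)^{-1}(g+V_{\pm}u)$, bootstrap the $H^{k+1}$ regularity from the $L^{2}$ boundedness of $L_{\pm}^{-1}$ on the appropriate orthogonal complement, and then use the pointwise decay $|K(x)|\lesssim\langle x\rangle^{-3}$ of the kernel of $(D+1)^{-1}$ together with the decay of $V_{\pm}$ to iterate the $\langle x\rangle^{-3}$ bound. The paper is terser on the decay step (it defers the bootstrap to \cite{FJLenzmann-non-2007}), while you spell out the weighted convolution estimate; conversely, your conclusion carries an extra $\|g\|_{H^{k}}$ term in the decay bound, but this is easily absorbed: you only need $\|u\|_{H^{1}}\lesssim\|g\|_{L^{2}}$ (the regularity bound at level $k=0$), and in $\mathbb{R}^{2}$ one has $\|g\|_{L^{2}}\lesssim\|\langle\cdot\rangle^{3}g\|_{\infty}$ since $\langle x\rangle^{-6}$ is integrable, so the stated inequality holds as written.
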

\begin{proof}
It suffices to prove the lemma for $L_{-}^{-1}g$, since the estimate for  $L_{+}^{-1}f$ follow in the same fashion.

To show the regularity bound, we can assume that $k\in\mathbb{N}$ is an integer. Let  $L_{-}^{-1}g=h$, and thus
\begin{align}\notag
Dh+h=Qh+g.
\end{align}
Note that $Q\in H^2(\mathbb{R}^2)\cap C^{\infty}(\mathbb{R}^2)$ for any $k\in\mathbb{N}$ by Sobolev embeddings and the fact that $Q\in H^{1/2}(\mathbb{R}^2)$. Applying $\nabla^k+1$ to the equation above and using the Leibniz rule and H\"{o}lder, we find that
\begin{align}\label{appendix-A-1}
\|h\|_{H^{k+1}}\sim\|(\nabla^k+1)(Dh+h)\|_2
\lesssim\|Q\|_{W^{k,\infty}}\|h\|_{H^k}+\|g\|_{H^k}.
\end{align}
Note, in particular, that $\|h\|_2=\|L_{-}^{-1}g\|_2\lesssim\|g\|_2$ holds, since $L_{-}$ has a bound inverse on $Q^{\bot}$. Hence \eqref{appendix-A-1} shows that the desired regularity estimates are true for $k=0$. By induction, we obtain the desired estimate $\|L_{-}^{-1}g\|_{H^{k+1}}\lesssim\|g\|_{H^k}$ for any integer $k\in\mathbb{N}$.

To show the decay estimate, we argue as follows. Assume that $\|\langle x\rangle^{-3}g\|_{\infty}<+\infty$, because otherwise there is nothing to prove. As above, let $L_{-}^{-1}g=h$ and rewrite the equation satisfied by $h$ in resolvent form:
\begin{align}\notag
h=\frac{1}{D}Qh+\frac{1}{D}g.
\end{align}
Let $R(x-y)=\mathcal{F}^{-1}(\frac{1}{|\xi|+1})(x-y)$ denote the associated kernel of the resolvent $(D+1)^{-1}$. From \cite{FrankLS2016}  we recall the standard fact that $R\in L^{p}(\mathbb{R}^2)$ for any $p\in[1,\infty]$ with $1-\frac{1}{p}<\frac{1}{2}$. Since $h\in L^2(\mathbb{R}^2)$, this implies that $(R*h)(x)$ is continuous and vanishes as $|x|\rightarrow\infty$. Moreover we have the pointwise bound
\begin{align}\notag
0<R(z)\lesssim\frac{1}{|x|^{3}},\ \ \text{for}\ |x|\geq1.
\end{align}
Using this bound and our decay assumption on $h(x)$, it is elementary to check that
\begin{align}\notag
|R*g(x)|\lesssim\min\{1,|x|^{-3}\}.
\end{align}
Using this bound, we can bootstrap the equation for $f$, using that $Q$ is continuous and vanishes at infinity; we refer to \cite{FJLenzmann-non-2007} for details on a similar decay estimate. This shows that $|f(x)|\lesssim\langle x\rangle^{-3}$ as desired.
\end{proof}

\section{On the modulation equations}

Here we collect some results and estimates regarding the modulation theory used in section \ref{section-mod-estimate}.
\subsection{Uniqueness of modulation parameters}\label{section-app-mod-1}
First, we show that the parameters $\{a,b,\lambda,\alpha,\gamma\}$ are uniquely determined if $\epsilon=\epsilon_1+i\epsilon_2\in H^{1/2}(\mathbb{R}^2)$ is sufficiently small and satisfies the orthogonality conditions \eqref{mod-orthogonality-condition}. Indeed, this follows from an implicit function argument, which we detail here.

For $\delta>0$, let $W_{\delta}=\{w\in H^{1/2}(\mathbb{R}^2):\|w-Q\|_{H^{1/2}}<\delta\}$. Consider approximate blowup profiles $Q_{\mathcal{P}}$ with $|\mathcal{P}|=|(a,b)|<\eta$, where $\eta>0$ is a small constant. For $w\in W_{\delta}$, $\lambda_1>0$, $y_1\in\mathbb{R}^2$, $\gamma\in\mathbb{R}$ and $|\mathcal{P}|<\eta$, we define
\begin{align}\notag
\epsilon_{\lambda_1,y_1,\gamma_1,a,b}(y)
=e^{i\gamma_1}\lambda_1w(\lambda_1y-y_0)-Q_{\mathcal{P}}.
\end{align}
Consider the map $\mathbf{\sigma}=(\sigma^1,\sigma^2,\sigma^3,\sigma^4,\sigma^5,\sigma^6,\sigma^7)$ define by
\begin{align*}
\sigma^1&=((\epsilon_{\lambda_1,y_0,\gamma_1,a,b})_2,\Lambda Q_{1\mathcal{P}})-
((\epsilon_{\lambda_1,y_1,\gamma_1,a,b})_1,\Lambda Q_{2\mathcal{P}}),\\
\sigma^2&=((\epsilon_{\lambda_1,y_1,\gamma_1,a,b})_2,\partial_aQ_{1\mathcal{P}})-
((\epsilon_{\lambda_1,y_1,\gamma_1,a,b})_1,\partial_aQ_{2\mathcal{P}}),\\
\sigma^3&=((\epsilon_{\lambda_1,y_1,\gamma_1,a,b})_1,\rho_2)-
((\epsilon_{\lambda_1,y_1,\gamma_1,a,b})_2,\rho_1),\\
\sigma^4&=((\epsilon_{\lambda_1,y_1,\gamma_1,a,b})_2,\partial_{1} Q_{1\mathcal{P}})-
((\epsilon_{\lambda_1,y_1,\gamma_1,a,b})_1,\partial_{1} Q_{2\mathcal{P}}),\\
\sigma^5&=((\epsilon_{\lambda_1,y_1,\gamma_1,a,b})_2,\partial_{2} Q_{1\mathcal{P}})-
((\epsilon_{\lambda_1,y_1,\gamma_1,a,b})_1,\partial_{2} Q_{2\mathcal{P}}),\\
\sigma^6&=((\epsilon_{\lambda_1,y_1,\gamma_1,a,b})_2,\partial_{b_1} Q_{1\mathcal{P}})-
((\epsilon_{\lambda_1,y_1,\gamma_1,a,b})_1,\partial_{b_1} Q_{2\mathcal{P}}),\\
\sigma^7&=((\epsilon_{\lambda_1,y_1,\gamma_1,a,b})_2,\partial_{b_2} Q_{1\mathcal{P}})-
((\epsilon_{\lambda_1,y_1,\gamma_1,a,b})_1,\partial_{b_2} Q_{2\mathcal{P}}).
\end{align*}
Recall that $\rho=\rho_1+i\rho_2$ was defined in \eqref{mod-definition-rho}. Taking the partial derivatives at $(\lambda_1,y_1,\gamma_1,a,b_1,b_2)=(1,0,0,0,0,0)$ yields that
\begin{align*}
\frac{\partial\epsilon_{\lambda_1,y_1,\gamma_1,a,b}}{\partial\lambda_1}=\Lambda w, \frac{\partial\epsilon_{\lambda_1,y_1,\gamma_1,a,b}}{\partial y_{1,j}}=-\partial_{j} w,
\frac{\partial\epsilon_{\lambda_1,y_1,\gamma_1,a,b}}{\partial\gamma_1}=iw,\\
\frac{\partial\epsilon_{\lambda_1,y_1,\gamma_1,a,b}}{\partial a}=-\partial_aQ_{\mathcal{P}}|_{\mathcal{P}=(0,0)}=-iS_{1,0},\\
\frac{\partial\epsilon_{\lambda_1,y_1,\gamma_1,a,b}}{\partial b_j}=-\partial_{b_j}Q_{\mathcal{P}}|_{\mathcal{P}=(0,0)}=-iS_{0,1,j},
\end{align*}
where we recall that $L_{-}S_{1,0}=\Lambda Q$ and $L_{-}S_{0,1}=-\nabla Q$. Note that $S_{1,0}$ is an radial function, whereas $S_{0,1}$ is antisymmetry. At $(\lambda_1,y_1,\gamma_1,a,b_1,b_2,w)=(1,0,0,0,0,0,0,Q)$, the Jacobian of the map $\sigma$ is hence given by
\begin{align*}
\frac{\partial \sigma^1}{\partial\lambda_1}=0,\ \frac{\partial\sigma^1}{\partial y_{1,1}}=0,\ \frac{\partial\sigma^1}{\partial y_{1,2}}=0,\ \frac{\partial \sigma^1}{\partial\gamma_1}=0,\ \frac{\partial \sigma^1}{\partial a}=-(S_{1,0},L_{-}S_{1,0}),\ \frac{\partial \sigma^1}{\partial b_1}=0,\ \frac{\partial \sigma^1}{\partial b_2}=0,\\
\frac{\partial \sigma^2}{\partial\lambda_1}=-(S_{1,0},L_{-}S_{1,0}),\ \frac{\partial\sigma^2}{\partial y_{1,1}}=0,\ \frac{\partial\sigma^2}{\partial y_{1,2}}=0,\ \frac{\partial \sigma^2}{\partial\gamma_1}=0,\ \frac{\partial \sigma^2}{\partial a}=0,\ \frac{\partial \sigma^2}{\partial b_1}=0,\ \frac{\partial \sigma^2}{\partial b_2}=0,\\
\frac{\partial \sigma^3}{\partial\lambda_1}=0,\ \frac{\partial\sigma^3}{\partial y_{1,1}}=0,\ \frac{\partial\sigma^3}{\partial y_{1,2}}=0,\ \frac{\partial \sigma^3}{\partial\gamma_1}=-(Q,\rho_1),\ \frac{\partial \sigma^3}{\partial a}=0,\ \frac{\partial \sigma^3}{\partial b_1}=0,\ \frac{\partial \sigma^3}{\partial b_2}=0,\\
\frac{\partial \sigma^4}{\partial\lambda_1}=0,\ \frac{\partial\sigma^4}{\partial y_{1,1}}=0,\ \frac{\partial\sigma^4}{\partial y_{1,2}}=0,\ \frac{\partial \sigma^4}{\partial\gamma_1}=0,\ \frac{\partial \sigma^4}{\partial a}=0,\ \frac{\partial \sigma^4}{\partial b_1}=-(L_{-}S_{0,1,1},S_{0,1,1}),\ \frac{\partial \sigma^4}{\partial b_2}=0,\\
\frac{\partial \sigma^5}{\partial\lambda_1}=0,\ \frac{\partial\sigma^5}{\partial y_{1,1}}=0,\ \frac{\partial\sigma^5}{\partial y_{1,2}}=0,\ \frac{\partial \sigma^5}{\partial\gamma_1}=0,\ \frac{\partial \sigma^5}{\partial a}=0,\ \frac{\partial \sigma^5}{\partial b_1}=0,\ \frac{\partial \sigma^5}{\partial b_2}=-(L_{-}S_{0,1,2},S_{0,1,2}),\\
\frac{\partial \sigma^6}{\partial\lambda_1}=0,\ \frac{\partial\sigma^6}{\partial y_{1,1}}=(L_{-}S_{0,1,1},S_{0,1,1}),\ \frac{\partial\sigma^6}{\partial y_{1,2}}=0,\  \frac{\partial \sigma^6}{\partial\gamma_1}=0,\ \frac{\partial \sigma^6}{\partial a}=0,\ \frac{\partial \sigma^6}{\partial b_1}=0\ \frac{\partial \sigma^6}{\partial b_2}=0,\\
\frac{\partial \sigma^7}{\partial\lambda_1}=0,\ \frac{\partial\sigma^7}{\partial y_{1,1}}=0,\ \frac{\partial\sigma^7}{\partial y_{1,2}}=(L_{-}S_{0,1,2},S_{0,1,2}),\  \frac{\partial \sigma^7}{\partial\gamma_1}=0,\ \frac{\partial \sigma^7}{\partial a}=0,\ \frac{\partial \sigma^7}{\partial b_1}=0\ \frac{\partial \sigma^7}{\partial b_2}=0.
\end{align*}
Note that we used here that $Q$ and $S_{1,0}$ are the radial symmetry functions, whereas $S_{0,1}$ is antisymmetry, for example $(Q,S_{0,1})=0$. Moreover, we note
\begin{align}\notag
-(Q,\rho_1)=(L_{+}\Lambda Q,\rho_1)=-(\Lambda Q,L_{+}\rho_1)=-(\Lambda Q,S_{1,0})=-(L_{-}S_{1,0},S_{1,0}).
\end{align}
Since $(L_{-}S_{1,0},S_{1,0})>0$ and $(L_{-}S_{0,1,j},S_{0,1,j})>0$, hence the determinant of the functional matrix is nonzero. By the implicit function theorem, we obtain existence and uniqueness for $(\lambda_1,y_1,\gamma_1,a,b_1,b_2,w)$ in some neighborhood around $(1,0,0,0,0,0,0,Q)$.

\subsection{Estimates for the modulation equations}
To conclude this section, we collect some estimates needed in the discussion of the modulation equations in section \ref{section-mod-estimate}.
\begin{lemma}\label{lemma-app-mod-estimate}
The following estimates hold
\begin{align}\label{app-B-1}
&(M_{-}(\epsilon)-a\Lambda\epsilon_1+b\cdot\nabla\epsilon_1,\Lambda Q_{2\mathcal{P}})+(M_{+}(\epsilon)+a\Lambda\epsilon_2-b\cdot\nabla\epsilon_2,\Lambda Q_{1\mathcal{P}})\notag\\
=&-\Re(\epsilon,Q_{\mathcal{P}})
+\mathcal{O}(\mathcal{P}^2\|\epsilon\|_2),\\\label{app-B-2}
&(M_{-}(\epsilon)-a\Lambda\epsilon_1+b\cdot\nabla\epsilon_1,\partial_a Q_{2\mathcal{P}})+(M_{+}(\epsilon)+a\Lambda\epsilon_2-b\cdot\nabla\epsilon_2,
\partial_a Q_{1\mathcal{P}})
=\mathcal{O}(\mathcal{P}^2\|\epsilon\|_2),\\\label{app-B-3}
&(M_{-}(\epsilon)-a\Lambda\epsilon_1+b\cdot\nabla\epsilon_1,\rho_2)
+(M_{+}(\epsilon)+a\Lambda\epsilon_2-b\cdot\nabla\epsilon_2,\rho_1 )
=\mathcal{O}(\mathcal{P}^2\|\epsilon\|_2),\\\label{app-B-4}
&(M_{-}(\epsilon)-a\Lambda\epsilon_1+b\cdot\nabla\epsilon_1,\partial_j Q_{2\mathcal{P}})+(M_{+}(\epsilon)+a\Lambda\epsilon_2-b\cdot\nabla\epsilon_2,\partial_{j} Q_{1\mathcal{P}})
=\mathcal{O}(\mathcal{P}^2\|\epsilon\|_2),\\\label{app-B-5}
&(M_{-}(\epsilon)-a\Lambda\epsilon_1+b\cdot\nabla\epsilon_1,\partial_{b_j} Q_{2\mathcal{P}})+(M_{+}(\epsilon)+a\Lambda\epsilon_2-b\cdot\nabla\epsilon_2,\partial_{b_j} Q_{1\mathcal{P}})
=\mathcal{O}(\mathcal{P}^2\|\epsilon\|_2).
\end{align}
\end{lemma}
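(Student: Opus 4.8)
The plan is to prove all five identities at once, by expanding both $M_{\pm}(\epsilon)$ and the test functions to second order in $\mathcal{P}=(a,b)$, transferring the self-adjoint operators $L_{\pm}$ off $\epsilon$ onto the test functions (which are smooth with $\langle x\rangle^{-3}$ decay, by Lemma~\ref{lemma-3app} and Lemma~\ref{app-lemma-1}), and then organizing the resulting terms by their order in $\mathcal{P}$. The key preliminary observation is that, since the first-order profile corrections $\mathbf{R}_{1,0},\mathbf{R}_{0,1,j}$ are purely imaginary (their real parts vanish, cf.\ the symmetry table after Lemma~\ref{lemma-3app}), one has $Q_{1\mathcal{P}}=Q+\mathcal{O}(\mathcal{P}^2)$, $Q_{2\mathcal{P}}=aS_{1,0}+b\cdot S_{0,1}+\mathcal{O}(\mathcal{P}^2)$, and hence $|Q_{\mathcal{P}}|=Q+\mathcal{O}(\mathcal{P}^2)$ (here $S_{1,0},S_{0,1}$ decay like $Q$, so $Q_{2\mathcal{P}}/Q$ stays bounded and the factors $|Q_{\mathcal{P}}|^{-1}$ are harmless). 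Substituting into \eqref{mod-define-M1}--\eqref{mod-define-M2} yields the decomposition
$$M_{+}(\epsilon)=L_{+}\epsilon_1-(aS_{1,0}+b\cdot S_{0,1})\epsilon_2+\mathcal{N}_{+}(\epsilon),\qquad M_{-}(\epsilon)=L_{-}\epsilon_2-(aS_{1,0}+b\cdot S_{0,1})\epsilon_1+\mathcal{N}_{-}(\epsilon),$$
with $\mathcal{N}_{\pm}(\epsilon)=\mathcal{O}(\mathcal{P}^2)(|\epsilon_1|+|\epsilon_2|)$ pointwise with $\langle x\rangle^{-3}$ decay. I would also record the $\mathcal{O}(\mathcal{P}^2)$-accurate expansions of the test functions already used in Lemma~\ref{lemma-mod-2} ($\Lambda Q_{1\mathcal{P}}=\Lambda Q+\mathcal{O}(\mathcal{P}^2)$, $\Lambda Q_{2\mathcal{P}}=a\Lambda S_{1,0}+b\cdot\Lambda S_{0,1}+\mathcal{O}(\mathcal{P}^2)$, $\partial_aQ_{2\mathcal{P}}=S_{1,0}+\mathcal{O}(\mathcal{P}^2)$, $\partial_jQ_{1\mathcal{P}}=\partial_jQ+\mathcal{O}(\mathcal{P}^2)$, $\partial_aQ_{1\mathcal{P}},\partial_{b_j}Q_{1\mathcal{P}}=\mathcal{O}(\mathcal{P})$, etc.), noting that $\rho=\rho_1+i\rho_2$ is $\mathcal{P}$-independent with $\rho_2=\mathcal{O}(\mathcal{P})$.

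For each identity I would then substitute and split the left-hand side into three groups: \emph{(a)} the principal pairing of $L_{+}\epsilon_1$ and $L_{-}\epsilon_2$ against the leading parts of the real and imaginary test functions, evaluated by transferring $L_{\pm}$ and using $L_{+}\Lambda Q=-Q$, $L_{+}\nabla Q=0$, $L_{-}S_{1,0}=\Lambda Q$, $L_{-}S_{0,1}=-\nabla Q$, $L_{+}\rho_1=S_{1,0}$, $L_{+}T_{2,0}=\tfrac12 S_{1,0}-\Lambda S_{1,0}+\tfrac12 S_{1,0}^2$; \emph{(b)} the transport terms $\mp a\Lambda\epsilon_i$, $\pm b\cdot\nabla\epsilon_i$, integrated by parts ($\Lambda^{*}=-\Lambda$, $\nabla^{*}=-\nabla$) so the derivatives hit the test function; \emph{(c)} the off-diagonal deformation, i.e.\ the $\mathcal{O}(\mathcal{P})$ piece $-(aS_{1,0}+b\cdot S_{0,1})\epsilon$ of $M_{\pm}$ against the test functions. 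In (b) and (c), anything paired against an $\mathcal{O}(\mathcal{P})$ test function is $\mathcal{O}(\mathcal{P}^2\|\epsilon\|_2)$ at once (and $\mathcal{N}_{\pm}(\epsilon)$ always contributes at this order), so the delicate terms are the exact-order-$\mathcal{P}$ contributions coming from the $\mathcal{O}(1)$ test functions $\Lambda Q_{1\mathcal{P}},\partial_jQ_{1\mathcal{P}}$ and from the principal pairing.

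Collapsing those order-$\mathcal{P}$ contributions is the heart of the matter. I would use the defining ODEs together with $DS_{1,0}=\Lambda Q-S_{1,0}+QS_{1,0}$, $DS_{0,1}=-\nabla Q-S_{0,1}+QS_{0,1}$, the pointwise identity $\Lambda Q=Q+x\cdot\nabla Q$, and the commutators $[D,\Lambda]=D$, $[Q,\Lambda]=-x\cdot\nabla Q$, $[\Lambda,\nabla]=-\nabla$, $[\nabla,L_{-}]=-\nabla Q$. For example, in \eqref{app-B-1}, $(L_{-}\epsilon_2,\Lambda Q_{2\mathcal{P}})=(\epsilon_2,L_{-}\Lambda Q_{2\mathcal{P}})$ with $L_{-}\Lambda S_{1,0}=\Lambda^2Q+DS_{1,0}+(x\cdot\nabla Q)S_{1,0}$; the transport term $a(\Lambda\epsilon_2,\Lambda Q_{1\mathcal{P}})=-a(\epsilon_2,\Lambda^2Q)+\mathcal{O}(\mathcal{P}^3\|\epsilon\|_2)$ removes the $\Lambda^2Q$, and together with the off-diagonal term $-a(\epsilon_2,S_{1,0}\Lambda Q)$ the remainder collapses via $DS_{1,0}+(x\cdot\nabla Q)S_{1,0}-S_{1,0}\Lambda Q=\Lambda Q-S_{1,0}$ to $a(\epsilon_2,\Lambda Q-S_{1,0})$; the $b$-part collapses analogously to $-b\cdot(\epsilon_2,S_{0,1})$, while the $\epsilon_1$-part reduces to $-(\epsilon_1,Q)$. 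The same type of manipulation reduces group (c) for \eqref{app-B-2}--\eqref{app-B-5} to linear combinations of $a(\epsilon_i,\Lambda Q)$, $a(\epsilon_i,S_{1,0})$, $b\cdot(\epsilon_i,\nabla Q)$ and $b\cdot(\epsilon_i,S_{0,1})$.

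Finally, the orthogonality conditions \eqref{mod-orthogonality-condition} remove the surviving order-$\mathcal{P}$ terms: they give $(\epsilon_2,\Lambda Q)=(\epsilon_1,\Lambda Q_{2\mathcal{P}})+\mathcal{O}(\mathcal{P}^2\|\epsilon\|_2)=\mathcal{O}(\mathcal{P}\|\epsilon\|_2)$, $(\epsilon_1,S_{1,0})=(\epsilon_2,\partial_aQ_{1\mathcal{P}})+\mathcal{O}(\mathcal{P}^2\|\epsilon\|_2)=\mathcal{O}(\mathcal{P}\|\epsilon\|_2)$, and similarly $(\epsilon_1,\nabla Q)$ and $(\epsilon_1,S_{0,1})$ are $\mathcal{O}(\mathcal{P}\|\epsilon\|_2)$, so after multiplication by the prefactor $a$ or $|b|$ each is $\mathcal{O}(\mathcal{P}^2\|\epsilon\|_2)$. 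For \eqref{app-B-1} this turns $-(\epsilon_1,Q)-a(\epsilon_2,S_{1,0})-b\cdot(\epsilon_2,S_{0,1})$ into $-(\epsilon_1,Q_{1\mathcal{P}})-(\epsilon_2,Q_{2\mathcal{P}})+\mathcal{O}(\mathcal{P}^2\|\epsilon\|_2)=-\Re(\epsilon,Q_{\mathcal{P}})+\mathcal{O}(\mathcal{P}^2\|\epsilon\|_2)$, and for \eqref{app-B-2}--\eqref{app-B-5} it kills every remainder. I expect the main obstacle to be precisely this bookkeeping: one has to keep straight which profile correction enters at order $\mathcal{P}$ rather than $\mathcal{P}^2$ — the vanishing of the real parts of the first-order corrections, hence $Q_{1\mathcal{P}}=Q+\mathcal{O}(\mathcal{P}^2)$ and $|Q_{\mathcal{P}}|=Q+\mathcal{O}(\mathcal{P}^2)$, is exactly what makes the cancellations close — and to apply the correct orthogonality relation to each order-$\mathcal{P}$ remainder; the radial/antisymmetric parity of the $\mathbf{R}_{k,l}$ then guarantees that no further cross terms appear, and all remaining error estimates are elementary $L^2\times L^{\infty}$ and $L^2\times L^2$ bounds.
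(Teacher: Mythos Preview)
Your proposal is correct and follows essentially the same route as the paper's proof: expand $M_{\pm}(\epsilon)=L_{\pm}\epsilon_{1,2}-(aS_{1,0}+b\cdot S_{0,1})\epsilon_{2,1}+\mathcal{O}(\mathcal{P}^2|\epsilon|)$, transfer $L_{\pm}$ onto the test functions using the same commutator identities ($[L_{-},\Lambda]$, $[L_{-},\nabla]$, etc.) and the defining relations $L_{-}S_{1,0}=\Lambda Q$, $L_{-}S_{0,1}=-\nabla Q$, $L_{+}\Lambda Q=-Q$, $L_{+}T_{2,0}$, $L_{+}T_{1,1}$, $L_{+}\rho_1=S_{1,0}$, $L_{-}\rho_2$, and then absorb the residual order-$\mathcal{P}$ scalars via the orthogonality conditions \eqref{mod-orthogonality-condition}. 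Your organization into the three groups (a)--(c) is a bit tidier than the paper's estimate-by-estimate presentation, but the computations and cancellations are the same; one minor slip is the phrase ``$\rho$ is $\mathcal{P}$-independent'' (only $\rho_1$ is, while $\rho_2=\mathcal{O}(\mathcal{P})$ as you in fact use), which does not affect the argument.
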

\begin{proof}
First, we recall that
\begin{align*}
M_{+}(\epsilon)=&L_{+}\epsilon_1-|Q_{\mathcal{P}}|^{-1}
Q_{1\mathcal{P}}Q_{2\mathcal{P}}\epsilon_2+\mathcal{O}(\mathcal{P}\epsilon),\\
M_{-}(\epsilon)=&L_{-}\epsilon_2-|Q_{\mathcal{P}}|^{-1}
Q_{1\mathcal{P}}Q_{2\mathcal{P}}\epsilon_1+\mathcal{O}(\mathcal{P}\epsilon).
\end{align*}
We have notice the identity
\begin{align}\label{app-B-6}
L_{-}\Lambda S_{1,0}=-S_{1,0}+2(\Lambda Q)QS_{1,0}+\Lambda Q+\Lambda^2Q.
\end{align}
To see this relation, we recall that $L_{-}S_{1,0}=\Lambda Q$ and hence
\begin{align*}
L_{-}\Lambda S_{1,0}&=[L_{-},\Lambda]S_{1,0}+\Lambda L_{-}S_{1,0}=DS_{1,0}+(x\cdot\nabla Q)S_{1,0}+\Lambda^2Q\\
&=-S_{1,0}+|Q|S_{1,0}+\Lambda Q+(x\cdot\nabla Q)S_{1,0}+\Lambda^2Q\\
&=-S_{1,0}+\Lambda Q+(\Lambda Q)S_{1,0}+\Lambda^2Q.
\end{align*}
Hence, the identity is hold. Similarly, we deduce from $L_{-}S_{0,1}=-\nabla Q$ that
\begin{align}\label{app-B-7}
L_{-}\Lambda S_{0,1}=-S_{0,1}-\nabla Q+(\Lambda Q)S_{0,1}-\Lambda\nabla Q.
\end{align}
Next, we recall that
\begin{align}\notag
\Lambda Q_{1\mathcal{P}}=\Lambda Q+\mathcal{O}(\mathcal{P}^2),\ \Lambda Q_{2\mathcal{P}}=a\Lambda S_{1,0}+b\cdot\Lambda S_{0,1}+\mathcal{O}(\mathcal{P}^2).
\end{align}
Combining \eqref{app-B-6} and \eqref{app-B-7} with the fact and using that $L_{+}\Lambda Q=-Q$, we find that
\begin{align*}
&\text{left-hand side of \eqref{app-B-1}}\\
=&a(\epsilon_2,L_{-}\Lambda S_{1,0})+(\epsilon_2, b\cdot L_{-}\Lambda S_{0,1})-
|Q_{\mathcal{P}}|^{-1}
(Q_{1\mathcal{P}}Q_{2\mathcal{P}}\epsilon_1,a\Lambda S_{1,0}+b\cdot\Lambda S_{0,1})\\
&+(-a\Lambda\epsilon_1+b\cdot\Lambda\nabla\epsilon_1,a\Lambda S_{1,0}+b\cdot\Lambda S_{0,1})+(\epsilon_1,L_{+}\Lambda Q)+(a\Lambda\epsilon_2-b\cdot\nabla\epsilon_2,\Lambda Q)\\&-|Q_{\mathcal{P}}|^{-1}
(Q_{1\mathcal{P}}Q_{2\mathcal{P}}\epsilon_2,\Lambda Q)+\mathcal{O}(\mathcal{P}^2\|\epsilon\|_2)\\
=&-(\epsilon_1, Q)-a(\epsilon_2,S_{1,0})-(\epsilon_2,b\cdot S_{0,1})+a(\epsilon_2,\Lambda Q)
-(\epsilon_2,b\cdot\nabla Q)+\mathcal{O}(\mathcal{P}^2\|\epsilon\|_2)\\
=&-\Re(\epsilon, Q_{\mathcal{P}})+\mathcal{O}(\mathcal{P}^2\|\epsilon\|_2).
\end{align*}
Here we used that $a(\epsilon_2,\Lambda Q)=\mathcal{O}(\mathcal{P}^2\|\epsilon\|_2)$ and $(\epsilon_2,b\cdot\nabla Q)=\mathcal{O}(\mathcal{P}^2\|\epsilon\|_2)$, which follows from the orthogonality condition \eqref{mod-orthogonality-condition}.

$\mathbf{Estimate~\eqref{app-B-2}}$. From lemma \ref{lemma-3app-2} we recall that
\begin{align*}
\partial_aQ_{1\mathcal{P}}=2aT_{2,0}+b\cdot T_{1,1}+\mathcal{O}(\mathcal{P}^2),\ \partial_aQ_{2\mathcal{P}}=S_{1,0}+\mathcal{O}(\mathcal{P}^2),
\end{align*}
where
\begin{align*}
L_{+}T_{2,0}=\frac{1}{2}S_{1,0}-\Lambda S_{1,0}+\frac{1}{2}|S_{1,0}|^2,\
L_{+}T_{1,1}=S_{0,1}-\Lambda S_{0,1}+\nabla S_{1,0}+S_{1,0}S_{0,1}.
\end{align*}
Using this fact, we have
\begin{align*}
&\text{left-hand side of \eqref{app-B-2}}\\
=&(\epsilon_2,L_{-}S_{1,0})-|Q_{\mathcal{P}}|^{-1}
(Q_{1\mathcal{P}}Q_{2\mathcal{P}}\epsilon_1,S_{1,0})+a(\epsilon_1,\Lambda S_{1,0})\\
&-(\epsilon_1,b\cdot\nabla S_{1,0})+2a(\epsilon_1,L_{+}T_{2,0})+(\epsilon_1,b\cdot L_{+}T_{1,1})+
\mathcal{O}(\mathcal{P}^2\|\epsilon\|_2)\\
=&(\epsilon_2,\Lambda Q)-|Q_{\mathcal{P}}|^{-1}
((aQS_{1,0}+b\cdot QS_{0,1})\epsilon_1,S_{1,0})+a(\epsilon_1,\Lambda S_{1,0})\\
&-(\epsilon_1,b\cdot\nabla S_{1,0})+2a\left(\epsilon_1,\frac{1}{2}S_{1,0}-\Lambda S_{1,0}+\frac{1}{2}|S_{1,0}|^2\right)\\
&+(\epsilon_1,b\cdot S_{0,1}-b\cdot\Lambda S_{0,1})+\nabla S_{1,0}+b\cdot S_{1,0}S_{0,1})+
\mathcal{O}(\mathcal{P}^2\|\epsilon\|_2)\\
=&(\epsilon_2,\Lambda Q)-a(\epsilon_1,\Lambda S_{1,0})-(\epsilon_1,b\cdot\Lambda S_{0,1})+(\epsilon_1,b\cdot S_{0,1})+\mathcal{O}(\mathcal{P}^2\|\epsilon\|_2)\\
=&(\epsilon_2,\Lambda Q_{1\mathcal{P}})-(\epsilon_1,\Lambda Q_{2\mathcal{P}})+
\mathcal{O}(\mathcal{P}^2\|\epsilon\|_2),
\end{align*}
where in the last step we also used that $(\epsilon_1,b\cdot S_{0,1})=\mathcal{O}(\mathcal{P}^2\|\epsilon\|_2)$, thanks to the orthogonality condition \eqref{mod-orthogonality-condition}.

$\mathbf{Estimate~\eqref{app-B-3}}$. Indeed, by the definition of $\rho=\rho_1+i\rho_2$, we have
\begin{align*}
&\text{left-hand side of \eqref{app-B-3}}\\
=&(\epsilon_2,L_{-}\rho_2)+(\epsilon_1,L_{+}\rho_1)-
|Q_{\mathcal{P}}|^{-1}((aQS_{1,0}+b\cdot QS_{0,1})\epsilon_2,\rho_1)\\
&-a(\epsilon_2,\Lambda\rho_1)+(\epsilon_2,b\cdot\nabla\rho_1)
+\mathcal{O}(\mathcal{P}^2\|\epsilon\|_2)\\
=&a(\epsilon_2,S_{1,0}\rho_1)+a(\epsilon_2,\Lambda\rho_1)
-2a(\epsilon_2,T_{2,0})+(\epsilon_2,b\cdot S_{0,1}\rho_1)\\
&-(\epsilon_2,b\cdot\nabla\rho_1)-(\epsilon_2,b\cdot T_{1,1})+(\epsilon_1,S_{1,0})-
|Q_{\mathcal{P}}|^{-1}((aQS_{1,0}+b\cdot QS_{0,1})\epsilon_2,\rho_1)\\
&-a(\epsilon_2,\Lambda\rho_1)+(\epsilon_2,b\cdot\nabla\rho_1)
+\mathcal{O}(\mathcal{P}^2\|\epsilon\|_2)\\
=&-2a(\epsilon_2,T_{2,0})-(\epsilon_2,b\cdot T_{1,1})+(\epsilon_1,S_{1,0})
+\mathcal{O}(\mathcal{P}^2\|\epsilon\|_2)\\
=&-(\epsilon_2,\partial_aQ_{1\mathcal{P}})+(\epsilon_1,\partial_aQ_{2\mathcal{P}})
+\mathcal{O}(\mathcal{P}^2\|\epsilon\|_2)\\
=&\mathcal{O}(\mathcal{P}^2\|\epsilon\|_2),
\end{align*}
where we use the orthogonality condition \eqref{mod-orthogonality-condition}.

$\mathbf{Estimate~\eqref{app-B-4}}$. First, we note that
\begin{align*}
\nabla Q_{1\mathcal{P}}=\nabla Q+\mathcal{O}(\mathcal{P}^2),\
\nabla Q_{2\mathcal{P}}=a\nabla S_{1,0}+\sum_{j=1}^2b_j\nabla S_{0,1,j}+\mathcal{O}(\mathcal{P}^2).
\end{align*}
Moreover, we have the relation
\begin{align*}
L_{+}\nabla Q&=0,\ L_{-}\nabla S_{1,0}=(\nabla Q)S_{1,0}+\nabla\Lambda Q,\\
L_{-}\nabla S_{0,1}&=(\nabla Q)\cdot S_{0,1}-\nabla \cdot\nabla Q.
\end{align*}
Indeed, since $[D,\nabla]=0$, we have
\begin{align*}
L_{+}\nabla Q=(D+1-2Q)\nabla Q=\nabla(DQ+Q-Q^{2})=0.
\end{align*}
Due to the fact that $[L_{-},\nabla]=[Q,\nabla]$, we obtain
\begin{align*}
L_{-}\nabla S_{1,0}=[L_{-},\nabla]S_{1,0}+\nabla L_{-}S_{1,0}=(\nabla Q)S_{1,0}+\nabla\Lambda Q,
\end{align*}
where we use $L_{-}S_{1,0}=\Lambda Q$. Similarly, we can obtain $L_{-}\nabla S_{0,1,j}=(\nabla Q)\cdot S_{0,1,j}-\nabla(\partial_{x_j} Q),\,j=1,2$. Thus, we deduce
\begin{align*}
&\text{left-hand side of \eqref{app-B-4}}\\
=&a(\epsilon_2,L_{-}\partial_j S_{1,0})+b_j(\epsilon_2,L_{-}\partial_j S_{0,1,j})+(\epsilon_1,L_{+}\partial_j Q)-a(\epsilon_2QS_{1,0},\partial_j Q)\\
&-b_j(\epsilon_2QS_{0,1,j},\partial_j Q)-a(\epsilon_2,\Lambda\partial_j Q)
+b_j(\epsilon_2,\partial_j(\partial_{x_j}Q))+\mathcal{O}(\mathcal{P}^2\|\epsilon\|_2)\\
=&a(\epsilon_2,\partial_j QS_{1,0}+\partial_j\Lambda Q)
+b_j(\epsilon_2,\partial_j QS_{0,1,j}-\partial_j(\partial_{x_j}Q))\\
&-a(\epsilon_2QS_{1,0},\partial_j Q)-b_j(\epsilon_2QS_{0,1,j},\partial_j Q)
-a(\epsilon_2,\Lambda\partial_j Q)\\
&+b_j(\epsilon_2,\partial_j(\partial_{x_j}Q))+\mathcal{O}(\mathcal{P}^2\|\epsilon\|_2)\\
=&a(\epsilon_2,[\partial_j,\Lambda]Q)+\mathcal{O}(\mathcal{P}^2\|\epsilon\|_2)\\
=&a(\epsilon_2,\partial_j Q)+\mathcal{O}(\mathcal{P}^2\|\epsilon\|_2)=
\mathcal{O}(\mathcal{P}^2\|\epsilon\|_2),
\end{align*}
since $a(\epsilon_2,\partial_jQ)=\mathcal{O}(\mathcal{P}^2\|\epsilon\|_2)$ due to the orthogonality condition \eqref{mod-orthogonality-condition}.

$\mathbf{Estimate~\eqref{app-B-5}}$. We note that
\begin{align*}
\partial_{b_j}Q_{1\mathcal{P}}=aT_{1,1,j}+2b_jT_{0,2,j}+\mathcal{O}(\mathcal{P}^2),\ \partial_{b_j}Q_{2\mathcal{P}}=S_{0,1,j}+\mathcal{O}(\mathcal{P}^2),
\end{align*}
where
\begin{align*}
L_{+}T_{0,2,j}=\partial_{x_j} S_{0,1,j}+\frac{1}{2}|S_{0,1,j}|^2.
\end{align*}
Using the above relations, we obtain that
\begin{align*}
&\text{left-hand side of \eqref{app-B-5}}\\
=&(\epsilon_2,L_{-}S_{0,1,j})-a(\epsilon_1QS_{1,0},S_{0,1,j})
-b_j(\epsilon_1S_{0,1,j}Q,S_{0,1})+a(\epsilon_1,\Lambda S_{0,1,j})\\
&-b_j(\epsilon_1,\partial_{x_j} S_{0,1,j})+a(\epsilon_1,L_{+}T_{1,1,j})+2b_j(\epsilon_2,L_{+}T_{0,2,j})
+\mathcal{O}(\mathcal{P}^2\|\epsilon\|_2)\\
=&-(\epsilon_2,\partial_{x_j} Q)-a(\epsilon_1QS_{1,0},S_{0,1,j})
-b_j(\epsilon_1S_{0,1,j}Q,S_{0,1,j})+a(\epsilon_1,\Lambda S_{0,1,j})\\
&-b_j(\epsilon_1,\partial_{x_j} S_{0,1,j})
+a(\epsilon_1,S_{0,1,j}-\Lambda S_{0,1,j}+\nabla S_{1,0}+S_{0,1,j}S_{1,0})\\
&+2b_j(\epsilon_1,\partial_{x_j} S_{0,1,j}+\frac{1}{2}|S_{0,1,j}|^2))
+\mathcal{O}(\mathcal{P}^2\|\epsilon\|_2)\\
=&-(\epsilon_2,\partial_{x_j} Q)+a(\epsilon_1,\partial_{x_j} S_{1,0})+b_j(\epsilon_1,\partial_{x_j} S_{0,1,j})
+\mathcal{O}(\mathcal{P}^2\|\epsilon\|_2)\\
=&-(\epsilon_2,\partial_{x_j} Q_{1\mathcal{P}})+(\epsilon_1,\partial_{x_j} Q_{2\mathcal{P}})
+\mathcal{O}(\mathcal{P}^2\|\epsilon\|_2)\\
=&\mathcal{O}(\mathcal{P}^2\|\epsilon\|_2)
\end{align*}
where in the last step we use the orthogonality condition \eqref{mod-orthogonality-condition} and hence we proven  this lemma.
\end{proof}
\section{Coercivity estimate for the localized energy}
In the following, we assume that $A>0$ is a sufficiently large constant. Let $\phi:\mathbb{R}^2\rightarrow\mathbb{R}$ be the smooth cutoff function introduced in \eqref{energy-cutoff-function}, Section \ref{section-refined-energy}. For $\epsilon=\epsilon_1+i\epsilon_2\in H^{1/2}(\mathbb{R}^2)$, we consider the quadratic forms
\begin{align}\label{app-c-define-1}
L_{+,A}(\epsilon_1):&=\int_{s=0}^{\infty}\sqrt{s}\int\Delta \phi_A|\nabla(\epsilon_1)_{s}|^2dxds+\int|\epsilon_{1}|^2-2\int Q|\epsilon_1|^2\\\label{app-c-define-2}
L_{-,A}(\epsilon_2):&=\int_{s=0}^{\infty}\sqrt{s}\int\Delta \phi_A|\nabla(\epsilon_2)_{s}|^2dxds+\int|\epsilon_{2}|^2-\int Q|\epsilon_2|^2,
\end{align}
where $\Delta \phi_A=\Delta(\phi(\frac{x}{A}))$. As in lemma \ref{lemma-energy-estimate}, we denote
\begin{align}\label{app-c-define-3}
u_s=\sqrt{\frac{2}{\pi}}\frac{1}{-\Delta+s}u.\  \ \text{for}\ s>0.
\end{align}
We start with the following simple identity.

For $u\in H^{1/2}(\mathbb{R}^2)$, we have
\begin{align}\label{app-c-identity}
\int_0^{\infty}\sqrt{s}\int_{\mathbb{R}^2}|\nabla u_s|^2dxds=\|D^{1/2}u\|_2^2.
\end{align}
Indeed, by applying Fubini's theorem and using Fourier transform, we find that
\begin{align}\notag
\int_0^{\infty}\sqrt{s}\int_{\mathbb{R}^2}|\nabla u_s|^2dxds= \frac{1}{4\pi^2}\frac{2}{\pi}
\int_{\mathbb{R}^2}\int_0^{\infty}\frac{\sqrt{s}ds}{(\xi^2+s)^2}|\xi|^2|\hat{u}(\xi)|^2d\xi
=\|D^{1/2}u\|_2^2.
\end{align}
In general, we have
\begin{align}\label{app-c-identity-2}
\frac{2}{\pi}\int_0^{\infty}\sqrt{s}\int_{\mathbb{R}^2}|(-\Delta)^{\alpha/2}u_s|^2dxds
=\|D^{\alpha-\frac{1}{2}}u\|_2^2.
\end{align}
Next, we establish a technical result, which show that, when taking the limit $A\rightarrow+\infty$, the quadratic form $\int_0^{\infty}\sqrt{s}\int\Delta\phi_A|\nabla u_s|^2dxds+\|u\|_2^2$ defines a weak topology that serves as a useful substitute for weak convergence in $H^{1/2}(\mathbb{R}^2)$. The precise statement reads as follows.
\begin{lemma}\label{app-lemma-c-1}
Let $A_n\rightarrow\infty$ and suppose that $\{u_n\}_{n=1}^{\infty}$ is a sequence in $H^{1/2}(\mathbb{R}^2)$ such that
\begin{align}\notag
\int_0^{\infty}\sqrt{s}\int\Delta\phi_{A_{n}}|\nabla (u_n)_s|^2dxds+\|u_n\|_2^2\leq C,
\end{align}
for some constant $C>0$ independent of $n$. Then, after possibly passing to a subsequence of $\{u_n\}_{n=1}^{\infty}$, we have that
\begin{align}\notag
u_n\rightharpoonup u\ \text{weakly in}\ L^2(\mathbb{R}^2)\ \text{and}\ u_n\rightarrow u\ \text{strongly in}\ L^2_{loc}(\mathbb{R}^2),
\end{align}
and $u\in H^{1/2}(\mathbb{R}^2)$. Moreover, we have the bound
\begin{align}\notag
\|D^{1/2}u\|_2^2\leq\liminf_{n\rightarrow\infty}
\int_0^{\infty}\sqrt{s}\int\Delta\phi_{A_n}|\nabla (u_n)_s|^2dxds.
\end{align}
\end{lemma}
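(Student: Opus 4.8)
The plan is to combine weak compactness in $L^2$ with a \emph{localized} elliptic estimate for the resolvents $(u_n)_s$, and then pass to the limit by lower semicontinuity, using that the weights $\Delta\phi_{A_n}$ are identically $1$ on the balls $B_{A_n}$ and converge pointwise to $1$ as $n\to\infty$. First, since the hypothesis gives $\|u_n\|_{L^2}\le C$ uniformly in $n$, weak compactness of bounded sets in $L^2(\mathbb{R}^2)$ yields, after passing to a subsequence, $u_n\rightharpoonup u$ weakly in $L^2$, with $u\in L^2$ and $\|u\|_{L^2}\le\liminf_n\|u_n\|_{L^2}$. Next I would establish strong convergence in $L^2_{\mathrm{loc}}$: fix $R>0$; for $n$ with $A_n>R$ the weight $\Delta\phi_{A_n}$ is identically $1$ on $B_R$, while, by the structure of $\phi$ in \eqref{energy-cutoff-function}, it is bounded and has only an exponentially small negative part on $\{|x|>A_n\}$, so the hypothesis gives, after absorbing the exterior contribution, a uniform bound $\int_0^\infty\sqrt s\int_{B_R}|\nabla(u_n)_s|^2\,dx\,ds\le C_R$. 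Combined with the resolvent equation $-\Delta(u_n)_s+s(u_n)_s=\sqrt{2/\pi}\,u_n$ and $\|u_n\|_{L^2}\le C$, a localized form of the identity \eqref{app-c-identity} gives a uniform $H^{1/2}$ bound for $u_n$ on every ball; then the compact embedding $H^{1/2}_{\mathrm{loc}}\hookrightarrow L^2_{\mathrm{loc}}$ and a diagonal extraction over $R\in\mathbb N$ produce a further subsequence with $u_n\to u$ strongly in $L^2_{\mathrm{loc}}(\mathbb{R}^2)$, and passing to the limit in the localized bound shows $u\in H^{1/2}_{\mathrm{loc}}$.

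For the lower semicontinuity bound, I would use that for each fixed $s>0$ the operator $(-\Delta+s)^{-1}\colon L^2\to H^2$ is bounded, so $u_n\rightharpoonup u$ in $L^2$ implies $(u_n)_s\rightharpoonup u_s$ in $H^2$, hence $\nabla(u_n)_s\rightharpoonup\nabla u_s$ weakly in $L^2(\mathbb{R}^2)$. For fixed $R>0$ and $n$ large, $\Delta\phi_{A_n}\equiv 1$ on $B_R$, so weak lower semicontinuity of the $L^2$-norm on $B_R$ gives
\[
\int_{B_R}|\nabla u_s|^2\ \le\ \liminf_{n\to\infty}\int_{B_R}|\nabla(u_n)_s|^2\ \le\ \liminf_{n\to\infty}\int_{\mathbb{R}^2}\Delta\phi_{A_n}\,|\nabla(u_n)_s|^2\,dx,
\]
where the last inequality again uses that $\Delta\phi_{A_n}=1$ on $B_R$ and is bounded below by an exponentially small negative quantity outside, which is absorbed. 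Letting $R\to\infty$ (monotone convergence on the left), then multiplying by $\sqrt s$, integrating in $s$, and applying Fatou's lemma together with \eqref{app-c-identity}, I obtain
\[
\|D^{1/2}u\|_{L^2}^2=\int_0^\infty\!\!\sqrt s\int_{\mathbb{R}^2}|\nabla u_s|^2\,dx\,ds\ \le\ \liminf_{n\to\infty}\int_0^\infty\!\!\sqrt s\int_{\mathbb{R}^2}\Delta\phi_{A_n}\,|\nabla(u_n)_s|^2\,dx\,ds\ \le\ C,
\]
so $u\in H^{1/2}(\mathbb{R}^2)$ and the asserted estimate holds.

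The step I expect to be the main obstacle is the treatment of the exterior region $\{|x|>A_n\}$ in the last two steps, where $\Delta\phi_{A_n}$ fails to be nonnegative: since no uniform control on $\|D^{1/2}u_n\|_{L^2}$ is assumed, the slightly negative exterior term cannot simply be discarded and must be absorbed using the exponential decay of $\nabla\phi$ — hence the smallness of $\Delta\phi$ — away from the origin, together with the $O(1)$ bound in the hypothesis. A secondary technical point is the small-$s$ (low-frequency) regime in the localized version of \eqref{app-c-identity} used in the first paragraph, where the resolvents $(u_n)_s$ obey only a logarithmic bound and the localization must be done with care.
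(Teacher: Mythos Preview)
Your overall strategy is sound, and your lower semicontinuity argument---using that $(-\Delta+s)^{-1}:L^2\to H^2$ is bounded so that $\nabla(u_n)_s\rightharpoonup\nabla u_s$ weakly in $L^2$, then applying weak lower semicontinuity on $B_R$ and Fatou in $s$---is actually somewhat cleaner than the paper's, which instead establishes pointwise convergence of $\nabla(u_n)_s(x)$ via the explicit resolvent kernel and then uses dominated convergence on compact regions.

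The genuine gap is in the step you label a ``secondary technical point''. Passing from
\[
\int_0^\infty\sqrt{s}\int_{B_R}|\nabla(u_n)_s|^2\,dx\,ds\le C_R
\]
to a uniform local $H^{1/2}$ bound on $u_n$ is not a routine localization of \eqref{app-c-identity}. If one tries to bound $\|D^{1/2}(\chi_R u_n)\|_2^2=\int_0^\infty\sqrt{s}\,\|\nabla(\chi_R u_n)_s\|_2^2\,ds$ by comparing $(\chi_R u_n)_s$ with $\chi_R(u_n)_s$, the difference $w$ satisfies an elliptic equation whose right-hand side is supported on $\{R\le|x|\le 2R\}$ and involves both $\nabla(u_n)_s$ and $(u_n)_s$. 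Controlling the latter requires a bound on $\int_0^\infty\sqrt{s}\,\|(u_n)_s\|_{L^2}^2\,ds$, and the only global estimate available, $\|(u_n)_s\|_2\lesssim s^{-1}\|u_n\|_2$, yields $s^{-3/2}$, which is not integrable at $s=0$. This is precisely the small-$s$ obstruction you mention, and it is the heart of the matter rather than a side issue. The paper resolves it by a Littlewood--Paley splitting $u_n=u_n^l+u_n^h$ with $\widehat{u_n^l}$ supported in $\{|\xi|\le 2\}$: the low-frequency part is trivially bounded in $H^{1/2}$ by $\|u_n\|_2$, while for the high-frequency part the offending integral becomes $\int_0^1 s^{-1/2}\int_{|\xi|\ge 1}|\xi|^{-4}|\hat u_n|^2\,d\xi\,ds\lesssim\|u_n\|_2^2$, so the commutator estimate closes.

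On the sign of $\Delta\phi_{A_n}$: the paper's proof actually asserts and uses $\Delta\phi_{A_n}\ge 0$ throughout (this is how the inequality $\int_{B_R}\le\int\Delta\phi_{A_n}\cdot$ is justified, and why Fatou applies directly in the last step). With a choice of radial $\phi$ satisfying $\phi''+r^{-1}\phi'\ge 0$ this holds, and no absorption of a negative exterior part is needed. Your concern is reasonable given the specific tail formula $\phi(r)=3-e^{-r}$ written in Section~\ref{section-refined-energy}, but for the purposes of this lemma one should simply take $\phi$ with $\Delta\phi\ge 0$, as the argument requires.
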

\begin{proof}
Let $\eta\in\mathcal{S}(\mathbb{R}^2)$ be a smooth cutoff function in Fourier space that satisfies
\begin{align}\notag
\hat{\eta}(\xi)=\begin{cases}1\ \ \text{for}\ |\xi|\leq1,\\
0\ \ \text{for}\ |\xi|\geq2.
\end{cases}
\end{align}
For any $u\in H^{1/2}(\mathbb{R}^2)$, we write $u=u^l+u^h$ with
\begin{align}\notag
\hat{u}^l=\hat{\eta}\hat{u},\ \hat{u}^h=(1-\hat{\eta})\hat{u}.
\end{align}
Recall the definition of $u_s$, we readily notice that the relations
\begin{align}\notag
(u^l)_s=(u_s)^l,\ (u^h)_s=(u_s)^h.
\end{align}

$\mathbf{Step~1~control~of~u^h}$. Let $\chi\in\mathcal{C}_0^{\infty}(\mathbb{R}^2)$ be a smooth cutoff function such that
\begin{align}\notag
\chi(x)=\begin{cases}1\ \ \text{for}\ |x|\leq1,\\
0\ \ \text{for}\ |x|\geq2.
\end{cases}
\end{align}
For any $R>0$ given, we set
\begin{align}\notag
\chi_R(x)=\chi\left(\frac{x}{R}\right).
\end{align}
We now claim the following control: For any $R>0$, there exist constants $C_R>0$ and $A_0=A_0(R)>0$ such that for any $A\geq A_0$ and $ u\in H^{1/2}(\mathbb{R}^2)$, we have
\begin{align}\label{app-c-claim}
\int|D^{1/2}(\chi_Ru^h)|^2\leq C_R\left[\int_0^{\infty}\sqrt{s}\int\Delta\phi_A|\nabla u_s^h|^2dxds+\|u\|_2^2\right].
\end{align}
Indeed, from definition \eqref{app-c-define-3} we see that
\begin{align}\label{app-c-1}
-\Delta(\chi_Ru^h)_s+s(\chi_Ru^h)_s=\sqrt{\frac{2}{\pi}}\chi_Ru^h.
\end{align}
On the other hand, an elementary calculation show that
\begin{align*}
-\Delta(\chi_R(u_s)^h)+s\chi_R(u_s)^h=&\chi_R(-\Delta (u_s)^h+s(u_s)^h)-2\nabla\chi_R\cdot\nabla (u_s)^h-(u_s)^h\Delta\chi_R\\
=&\sqrt{\frac{2}{\pi}}\chi_R(u_s)^h-2\nabla\chi_R\cdot\nabla(u_s)^h-(u_s)^h\Delta\chi_R.
\end{align*}
Therefore, the function
\begin{align}\notag
w:=\sqrt{\frac{\pi}{2}}\left\{(\chi_Ru^h)_s-\chi_R(u_s)^h\right\}
\end{align}
satisfies the equation
\begin{align}\notag
-\Delta w+sw=\sqrt{\frac{\pi}{2}}\left\{2\nabla\chi_R\cdot \nabla (u_s)^h+(u_s)^h\Delta\chi_R\right\}.
\end{align}
Hence, we deduce the bound
\begin{align}\notag
\int_{\mathbb{R}^2}|\nabla w|^2+s\int_{\mathbb{R}^2}|w|^2\lesssim
\int_{\mathbb{R}^2}\left\{|\nabla\chi_R||\nabla (u_s)^h|+|(u_s)^h||\Delta\chi_R|\right\}|w|.
\end{align}
By using the Cauchy-Schwarz inequality, we conclude that
\begin{align*}
\int_{\mathbb{R}^2}|\nabla w|^2+s\int_{\mathbb{R}^2}|w|^2\leq C_R
\left\{\int_{|x|\leq2R}|\nabla (u_s)^h|^2+\int|(u_s)^h|^2\right\}\ &\text{for}\ s\geq1,\\
\int_{\mathbb{R}^2}|\nabla w|^2+s\int_{\mathbb{R}^2}|w|^2\leq\frac{C_R}{s}
\left\{\int|\nabla (u_s)^h|^2+\int|(u_s)^h|^2\right\}\ &\text{for}\ 0<s<1.
\end{align*}
Next, we apply identity \eqref{app-c-identity-2} and note that $\hat{u}^h(\xi)=0$ for $|\xi|\leq1$. For some sufficiently large $A>A_0(R)$, we obtain
\begin{align*}
\int_1^{\infty}\sqrt{s}\int_{\mathbb{R}^2}|\nabla w|^2dxds&\leq
C_R\int_0^{\infty}\sqrt{s}\left\{\int_{|x|\leq2R}|\nabla (u_s)^h|^2+\int|(u_s)^h|^2\right\}ds\\
&\leq C_R\int_0^{\infty}\sqrt{s}\int\Delta\phi_A|\nabla (u_s)^h|^2dxds+\|D^{-\frac{1}{2}}u^h\|_2^2\\
&\leq C_R\int_0^{\infty}\sqrt{s}\int\Delta\phi_A|\nabla (u_s)^h|^2dxds+\|u\|_2^2
\end{align*}
and
\begin{align*}
\int_0^1\sqrt{s}\int_{\mathbb{R}^2}|\nabla w|^2dxds
\leq &C_R\int_0^1\frac{\sqrt{s}}{s}
\int\frac{(1+|\xi|^2)|\hat{u}^h|^2}{(s+|\xi|^2)^2}d\xi ds\\
\leq&C_R\int_0^1\frac{1}{\sqrt{s}}
\int\frac{1+|\xi|^2}{|\xi|^4}|\hat{u}^h|^2d\xi ds\\
=&C_R\int_0^1\frac{1}{\sqrt{s}}\int_{|\xi|\geq1}
\frac{1+|\xi|^2}{|\xi|^4}|\hat{u}|^2d\xi ds\\
\leq&C_R\|u\|_2^2.
\end{align*}
Using \eqref{app-c-identity} and the previous bounds, we find that
\begin{align*}
&\|D^{\frac{1}{2}}(\chi_Ru^h)\|_2^2\\
=&\int_0^{\infty}\sqrt{s}\int|\nabla(\chi_Ru^h)_s|^2dxds\\
\leq&\int_0^{\infty}\sqrt{s}\int|\nabla w|^2dxds+\int_0^{\infty}\sqrt{s}\int|\nabla(\chi_R(u^h))_s|^2dxds\\
\leq&C_R\left(\int_0^{\infty}\sqrt{s}\int\Delta\phi_A|\nabla (u_s)^h|^2dxds+\|u\|_2^2\right)+
\int_0^{\infty}\sqrt{s}\int|u^h|^2dxds\\
\leq&C_R\left(\int_0^{\infty}\sqrt{s}\int\Delta\phi_A|\nabla (u_s)^h|^2dxds+\|u\|_2^2\right),
\end{align*}
which shows the claim \eqref{app-c-claim}.

$\mathbf{Step~2~Conclusion}$ Let $\{u_n\}_{n=1}^{\infty}$ satisfy the assumption in this lemma. By \eqref{app-c-identity}, we have for all $A>0$ that
\begin{align*}
\int_0^{\infty}\sqrt{s}\int\Delta\phi_A|\nabla (u_n^l)_s|^2dxds
&\leq\int_0^{\infty}\sqrt{s}\int|\nabla (u^l_n)_s|^2dxds\\
&=\|D^{\frac{1}{2}}u^l_n\|_2^2
\leq C\|u_n\|_2^2\leq C.
\end{align*}
Here we used the definition of $u^l$. Thus the assumed bound in this lemma ensures that
\begin{align}\notag
\int_0^{\infty}\sqrt{s}\int\Delta\phi_{A_n}|\nabla(u_n^h)_s|^2dxds\lesssim C.
\end{align}
Therefore we conclude from \eqref{app-c-claim} that, for all $R>0$, the $\{u_n\}_{n=0}^{\infty}$ is a bounded sequence in $H^{1/2}(B_R)$ and $L^2(\mathbb{R}^2)$. Hence, by passing to a subsequence if necessary, we can find that
\begin{align}\notag
u_n\rightharpoonup u\ \text{in}\ L^2(\mathbb{R}^2)\ \text{and}\ u_n\rightharpoonup u\ \text{in}\ H^{1/2}(\mathbb{R}^2)\ \text{for all}\ R>0.
\end{align}
By the compactness of the Sobolev embedding $H^{1/2}(\mathbb{R}^2)\hookrightarrow L^2_{loc}(\mathbb{R}^2)$, we also have
\begin{align}\notag
u_n\rightarrow u\ \text{in}\ L^2_{loc}(\mathbb{R}^2).
\end{align}
It remains to show the ``weak lower semicontinuity property" given by
\begin{align}\notag
\|D^{1/2}u\|_2^2=\int_0^{\infty}\sqrt{s}\int|\nabla u_s|^2dxds
\leq\liminf_{n\rightarrow+\infty}\int_0^{\infty}\sqrt{s}\int\Delta\phi_{A_n}|\nabla (u_n)_s|^2dxds.
\end{align}
Indeed, we first note that
\begin{align}\notag
\nabla(u_n)_s(x)=\sqrt{\frac{2}{\pi}}\int \nabla(G^s(x-y))u_n(y)dy.
\end{align}
Since $u_n\rightharpoonup u$ weakly in $L^2(\mathbb{R}^2)$ and $\nabla G^s(x-y)\in L^2_y(\mathbb{R}^2)$ for any $x\in\mathbb{R}^2$, we thus obtain
\begin{align}\notag
\nabla(u_n)_s(x)\rightarrow\nabla u_s(x)\ \text{pointwise on }\ \mathbb{R}^2\ \text{for any} \ s>0.
\end{align}
Next, by the Cauchy-Schwarz inequality, we derive the uniform pointwise bound
\begin{align}\notag
|\nabla (u_n)_s|\lesssim\|\nabla G^s\|_2\|(u_n)_s\|_2\leq C.
\end{align}
Let $0<\epsilon<1$ and $B>0$ be given. By the dominated convergence theorem, we deduce that
\begin{align*}
\int_{s=\epsilon}^{1/\epsilon}\sqrt{s}\int_{|x|\leq B}|\nabla u_s|dxds
=&\lim_{n\rightarrow\infty}\int_{s=\epsilon}^{1/\epsilon}\sqrt{s}\int_{|x|\leq B}|\nabla (u_n)_s|dxds\\
\leq&\liminf_{n\rightarrow+\infty}\int_0^{\infty}\sqrt{s}\int\Delta\phi_{A_n}|\nabla (u_n)_s|dxds,
\end{align*}
where in the last step we used Fatou's lemma and the fact that $\Delta\phi_{A_n}\geq0$ satisfies $\lim_{n\rightarrow+\infty}\phi_{A_n} =1$ for all $x\in\mathbb{R}^2$. Since the previous bound holds for arbitrary $0<\epsilon<1$ and $B>0$, we conclude that
\begin{align}\notag
\|D^{1/2}u\|_2^2=\int_0^{\infty}\sqrt{s}\int|\nabla u_s|^2dxds\leq
\liminf_{n\rightarrow+\infty}\int_0^{\infty}\sqrt{s}\int\Delta\phi_{A_n}|\nabla (u_n)_s|dxds.
\end{align}
The proof of lemma \ref{app-lemma-c-1} is complete.
\end{proof}

\begin{lemma}\label{lemma-app-c-2}
Let $L_{+,A}(\epsilon_1)$ and $L_{-,A}(\epsilon_2)$ be the quadratic forms defined \eqref{app-c-define-1} and \eqref{app-c-define-2}, respectively. Then there exist  constants $C_0>0$ and $A_0>0$ such that, for all $A\geq A_0$ and all $\epsilon=\epsilon_1+i\epsilon_2\in H^{1/2}(\mathbb{R}^2)$, we have the coercivity estimate
\begin{align}\notag
(L_{+,A}\epsilon_1,\epsilon_1)+(L_{-,A}\epsilon_2,\epsilon_2)\geq C_0\int|\epsilon|^2-\frac{1}{C_0}\left\{(\epsilon_1,Q)^2+(\epsilon_1,S_{1,0})^2
+|(\epsilon_1,S_{0,1})|^2+|(\epsilon_2,\rho_1)|^2\right\}.
\end{align}
Here $S_{1,0}$ and $S_{0,1}$ are the unique functions such that $L_{-}S_{1,0}=\Lambda Q$ with $(S_{1,0},\partial_{j}Q)=0$, where $j=1,2$ and $L_{-}S_{0,1}=-\nabla Q$ with $(S_{0,1},Q)=0$, respectively, and the function $\rho_1$ is defined in \eqref{mod-definition-rho}.
\end{lemma}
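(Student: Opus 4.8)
The plan is to argue by contradiction and compactness, reducing the claim to the \emph{unlocalized} coercivity estimate \eqref{mod-coercivity-estimate} (lemma \ref{lemma-app-coercivity-estimate}) via the weak‑compactness and lower‑semicontinuity principle of lemma \ref{app-lemma-c-1}. Since the asserted inequality is homogeneous of degree two in $\epsilon$, if it fails then there are sequences $A_n\to\infty$ and $\epsilon^{(n)}=\epsilon_1^{(n)}+i\epsilon_2^{(n)}\in H^{1/2}(\mathbb{R}^2)$ with $\|\epsilon_1^{(n)}\|_2^2+\|\epsilon_2^{(n)}\|_2^2=1$ and
\[
(L_{+,A_n}\epsilon_1^{(n)},\epsilon_1^{(n)})+(L_{-,A_n}\epsilon_2^{(n)},\epsilon_2^{(n)})+n\,B_n<\tfrac1n,\qquad B_n:=(\epsilon_1^{(n)},Q)^2+(\epsilon_1^{(n)},S_{1,0})^2+|(\epsilon_1^{(n)},S_{0,1})|^2+|(\epsilon_2^{(n)},\rho_1)|^2\ge0 .
\]
In particular $B_n\le 1/n^2\to0$ and $\limsup_n$ of the quadratic form is $\le0$.

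\textbf{Passage to the weak limit.} First I would record the a priori bound needed to invoke lemma \ref{app-lemma-c-1}. Since $\Delta\phi_A\ge0$ by the convexity condition imposed on $\phi$ and since $Q\in L^\infty(\mathbb{R}^2)$, the defining identities \eqref{app-c-define-1}--\eqref{app-c-define-2} together with $\|\epsilon^{(n)}\|_2=1$ give, for $j=1,2$,
\[
\int_0^\infty\sqrt s\!\int\Delta\phi_{A_n}|\nabla(\epsilon_j^{(n)})_s|^2\,dx\,ds+\|\epsilon_j^{(n)}\|_2^2\lesssim \tfrac1n+\|Q\|_\infty\lesssim1 .
\]
Hence lemma \ref{app-lemma-c-1} applies to each sequence $\{\epsilon_j^{(n)}\}_n$: after passing to a subsequence, $\epsilon_j^{(n)}\rightharpoonup\epsilon_j^\infty$ weakly in $L^2(\mathbb{R}^2)$, $\epsilon_j^{(n)}\to\epsilon_j^\infty$ strongly in $L^2_{\mathrm{loc}}(\mathbb{R}^2)$, $\epsilon_j^\infty\in H^{1/2}(\mathbb{R}^2)$, and
\[
\|D^{1/2}\epsilon_j^\infty\|_2^2\le\liminf_{n\to\infty}\int_0^\infty\sqrt s\!\int\Delta\phi_{A_n}|\nabla(\epsilon_j^{(n)})_s|^2\,dx\,ds .
\]
Write $\epsilon^\infty=\epsilon_1^\infty+i\epsilon_2^\infty$.

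\textbf{Limits of the remaining terms.} The linear functionals pass to the limit by weak $L^2$-convergence, because $Q,S_{1,0},S_{0,1},\rho_1\in L^2(\mathbb{R}^2)$ (decay and regularity from lemmas \ref{app-lemma-1} and \ref{lemma-3app}); thus $B_\infty:=(\epsilon_1^\infty,Q)^2+(\epsilon_1^\infty,S_{1,0})^2+|(\epsilon_1^\infty,S_{0,1})|^2+|(\epsilon_2^\infty,\rho_1)|^2=\lim_n B_n=0$. For the potential terms, splitting $\int Q|\epsilon_j^{(n)}|^2$ over $\{|x|\le R\}$ and $\{|x|>R\}$ and using strong $L^2_{\mathrm{loc}}$-convergence together with $|Q(x)|\lesssim\langle x\rangle^{-3}$ and $\|\epsilon^{(n)}\|_2=1$ yields $\int Q|\epsilon_j^{(n)}|^2\to\int Q|\epsilon_j^\infty|^2$. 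Combining these with the previous liminf bound, with the algebraic identities $(L_+f,f)=\|D^{1/2}f\|_2^2+\|f\|_2^2-2\int Q|f|^2$, $(L_-f,f)=\|D^{1/2}f\|_2^2+\|f\|_2^2-\int Q|f|^2$, and with superadditivity of $\liminf$ (noting $\|\epsilon_1^{(n)}\|_2^2+\|\epsilon_2^{(n)}\|_2^2\equiv1$), I obtain
\[
0\ \ge\ \liminf_{n\to\infty}\Big[(L_{+,A_n}\epsilon_1^{(n)},\epsilon_1^{(n)})+(L_{-,A_n}\epsilon_2^{(n)},\epsilon_2^{(n)})\Big]\ \ge\ (L_+\epsilon_1^\infty,\epsilon_1^\infty)+(L_-\epsilon_2^\infty,\epsilon_2^\infty)+\big(1-\|\epsilon^\infty\|_2^2\big).
\]

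\textbf{Conclusion and the main obstacle.} Since $B_\infty=0$, the unlocalized coercivity \eqref{mod-coercivity-estimate} applied to $\epsilon^\infty\in H^{1/2}$ gives $(L_+\epsilon_1^\infty,\epsilon_1^\infty)+(L_-\epsilon_2^\infty,\epsilon_2^\infty)\ge c_0\|\epsilon^\infty\|_{H^{1/2}}^2\ge c_0\|\epsilon^\infty\|_2^2$. Writing $t=\|\epsilon^\infty\|_2^2\in[0,1]$ (weak lower semicontinuity of the $L^2$-norm), the displayed inequality forces $0\ge c_0 t+(1-t)\ge\min\{c_0,1-c_0\}>0$ after replacing $c_0$ by $\min\{c_0,\tfrac12\}$ if necessary, a contradiction. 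This proves the lemma, with $A_0$ the threshold furnished by lemma \ref{app-lemma-c-1}. The essential point — and the only genuinely delicate one — is that the localized forms $L_{\pm,A}$ do not control the full $\dot H^{1/2}$-norm uniformly in $A$, so one cannot extract a weak $H^{1/2}$-limit directly; lemma \ref{app-lemma-c-1} is precisely the substitute that legitimizes the limit $A_n\to\infty$ and simultaneously places $\epsilon^\infty$ in $H^{1/2}$ with the needed lower semicontinuity. The secondary, more elementary point is that no mass can escape to spatial infinity: the potential $Q$ decays while the $L^2$-terms enter $L_{\pm,A}$ with a favorable sign, so any such loss produces the strictly positive defect $1-\|\epsilon^\infty\|_2^2$, which is exactly what closes the argument.
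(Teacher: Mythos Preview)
Your proof is correct and follows essentially the same route as the paper's: contradiction, invoke lemma~\ref{app-lemma-c-1} for compactness and lower semicontinuity of the localized gradient part, pass the potential terms to the limit using the decay of $Q$, and close against the unlocalized coercivity of lemma~\ref{lemma-app-coercivity-estimate}. The only differences are cosmetic: the paper treats $L_{+,A}$ and $L_{-,A}$ separately and imposes the orthogonality exactly on the contradicting sequence (then shows the limit is nontrivial via $\int Q|u|^2\geq 1$), whereas you handle both forms at once, let the orthogonality defects $B_n\to 0$ and pass them to the limit, and quantify nontriviality through the mass defect $1-\|\epsilon^\infty\|_2^2$; both devices are equivalent here. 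One minor remark: the bound $B_n\le 1/n^2$ is a slight overstatement (one only gets $B_n\lesssim 1/n$ since the quadratic forms are merely bounded below by $-C\|Q\|_\infty$), but you only use $B_n\to 0$, so this does not affect the argument.
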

\begin{proof}
It suffices to prove the coercivity bound
\begin{align}\label{app-c-2}
(L_{-,A}\epsilon_2,\epsilon_2)\geq C_0\int|\epsilon|^2-\frac{1}{C_0}|(\epsilon_2,\rho_1)|^2,
\end{align}
since the corresponding estimate for $L_{+,A}$ follows by the same argument.

To prove \eqref{app-c-2}, we argue by contradiction as follows. Suppose that there exists a sequence of functions $\{u_n\}_{n=1}^{\infty}\in H^{1/2}(\mathbb{R}^2)$ with
\begin{align}\notag
\|u_n\|_2^2=1,\ (u_n,\rho_1)=0,
\end{align}
as well as a sequence $A_n\rightarrow+\infty$ such that
\begin{align}\label{app-c-3}
\int_0^{+\infty}\sqrt{s}\int\Delta\phi_{A_n}|u_n|^2dxds+\int|u_n|^2
-\int|Q||u_n|^2\leq o(1)\int|u_n|^2,
\end{align}
where $o(1)\rightarrow0$ as $n\rightarrow+\infty$. By applying lemma \ref{app-lemma-c-1}, we find that (after passing to a subsequence if necessary)
\begin{align}\notag
u_n\rightharpoonup u\ \text{weakly in}\ L^2(\mathbb{R}^2)\ \text{and}\
u_n\rightarrow u\ \text{strongly in}\ L^2_{loc}(\mathbb{R}^2).
\end{align}
But since $Q(x)\rightarrow0$ as $|x|\rightarrow+\infty$, we easily check that $\int|Q||u_n|^2\rightarrow\int|Q||u|^2$. Moreover, from \eqref{app-c-3} and $\|u_n\|_2^2=1$, we deduce that $\int|Q||u|^2\geq1$ must hold. In particular, the weak limit $u\not\equiv0$ is nontrivial. However, by the weak lower semicontinuity inequality in lemma \ref{app-lemma-c-1} and the fact that $\liminf_{n\rightarrow\infty}\int|u_n|^2\geq\int|u|^2$, we deduce that
\begin{align}\notag
(L_{-}u,u)=\int|D^{\frac{1}{2}}u|^2+\int|u|^2-\int Q|u|^2\leq0,\ (u,\rho_1)=0.
\end{align}
Since $u\not\equiv0$, this bound contradicts the coercivity estimate for $L_{-}$ stated in below.
\end{proof}
\begin{lemma}\label{lemma-app-c-3}
For any $u\in L^2(\mathbb{R}^2)$, we have the bound
\begin{align}\notag
\left|\int_{s=0}^{+\infty}\sqrt{s}\int\Delta^2\phi_{A}|u_s|^2dxds\right|
\lesssim\frac{1}{A}\|u\|_2^2.
\end{align}
\end{lemma}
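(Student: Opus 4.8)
The plan is to reduce the estimate to a handful of elementary bounds on the resolvent $(-\Delta+s)^{-1}$ and on the rescaled weight. Writing $\Delta^2\phi_A(x)=A^{-4}(\Delta^2\phi)(x/A)$, the first step is to observe from the properties of $\phi$ in \eqref{energy-cutoff-function} that $\Delta^2\phi\in L^1(\mathbb{R}^2)\cap L^\infty(\mathbb{R}^2)$: it is bounded, it vanishes on $\{|x|\le 1\}$ (where $\Delta\phi$ is constant), and it decays exponentially as $|x|\to\infty$ (since $\phi(x)=3-e^{-|x|}$ there). Consequently $\|\Delta^2\phi_A\|_{\infty}\lesssim A^{-4}$ and $\|\Delta^2\phi_A\|_{1}=A^{-2}\|\Delta^2\phi\|_{1}\lesssim A^{-2}$. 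On the resolvent side I would record two facts: first, $\|u_s\|_2\le\sqrt{2/\pi}\,s^{-1}\|u\|_2$, because $(-\Delta+s)^{-1}$ is a Fourier multiplier bounded by $s^{-1}$; second, $\|u_s\|_\infty\le\sqrt{2/\pi}\,\|G^s\|_2\,\|u\|_2\lesssim s^{-1/2}\|u\|_2$, by Cauchy--Schwarz applied to the convolution $u_s=\sqrt{2/\pi}\,G^s*u$ together with the Plancherel computation $\|G^s\|_2^2\lesssim\int_{\mathbb{R}^2}(|\xi|^2+s)^{-2}\,d\xi= c\,s^{-1}$.

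Next I would split the $s$-integral at the scaling-natural threshold $s_0=A^{-2}$, after passing to $\int|\Delta^2\phi_A|\,|u_s|^2$ so that the sign of $\Delta^2\phi_A$ is irrelevant. For $s\ge A^{-2}$, pair $L^\infty$ with $L^2$:
\[
\left|\int_{\mathbb{R}^2}\Delta^2\phi_A\,|u_s|^2\,dx\right|\le\|\Delta^2\phi_A\|_{\infty}\,\|u_s\|_2^2\lesssim A^{-4}s^{-2}\|u\|_2^2,
\]
so that $\int_{A^{-2}}^{\infty}\sqrt s\cdot A^{-4}s^{-2}\,ds=A^{-4}\int_{A^{-2}}^{\infty}s^{-3/2}\,ds=2A^{-3}$. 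For $0<s<A^{-2}$, pair $L^1$ with $L^\infty$ instead:
\[
\left|\int_{\mathbb{R}^2}\Delta^2\phi_A\,|u_s|^2\,dx\right|\le\|\Delta^2\phi_A\|_{1}\,\|u_s\|_\infty^2\lesssim A^{-2}s^{-1}\|u\|_2^2,
\]
so that $\int_0^{A^{-2}}\sqrt s\cdot A^{-2}s^{-1}\,ds=A^{-2}\int_0^{A^{-2}}s^{-1/2}\,ds=2A^{-3}$. Adding the two regimes gives $\lesssim A^{-3}\|u\|_2^2$, which is in particular $\lesssim A^{-1}\|u\|_2^2$ for $A$ large; absolute convergence of the double integral (hence the use of Fubini) is clear since $u_s\in H^2(\mathbb{R}^2)\hookrightarrow L^\infty(\mathbb{R}^2)$ when $u\in L^2(\mathbb{R}^2)$.

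The one point that requires a moment's thought — and the reason a uniform bound $\int_{\mathbb{R}^2}|u_s|^2\le\|u_s\|_2^2$ for all $s$ would fail — is that $\int_0^\infty\sqrt s\,\|u_s\|_2^2\,ds$ equals a constant times $\|D^{-1/2}u\|_2^2$ by \eqref{app-c-identity-2}, and this quantity is \emph{not} controlled by $\|u\|_2^2$: it is precisely the low frequencies, i.e.\ the small-$s$ part of the integral, that are dangerous. The splitting above exploits the fact that on the small-$s$ side $u_s$ is spread out and so better measured in $L^\infty$, where $\|u_s\|_\infty\lesssim s^{-1/2}\|u\|_2$; pairing this with $\|\Delta^2\phi_A\|_{1}\lesssim A^{-2}$ rather than with $\|\Delta^2\phi_A\|_{\infty}$ renders the $s$-integral convergent at $s=0$, while on the large-$s$ side the operator bound $\|u_s\|_2\lesssim s^{-1}\|u\|_2$ makes the $L^\infty$--$L^2$ pairing converge at $s=\infty$. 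Everything else is a routine computation.
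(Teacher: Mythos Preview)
Your proof is correct and follows the same overall architecture as the paper: split the $s$-integral at a threshold and optimize. For large $s$ the two arguments are identical (H\"older with $\|\Delta^2\phi_A\|_\infty$ and $\|u_s\|_2\lesssim s^{-1}\|u\|_2$). The genuine difference is in the small-$s$ regime. The paper integrates by parts twice, moving one Laplacian from $\Delta^2\phi(\cdot/A)$ onto $|u_s|^2$; this turns the small-$s$ piece into an expression controlled by $\|\Delta\phi\|_\infty\bigl(\|\Delta u_s\|_2\|u_s\|_2+\|\nabla u_s\|_2^2\bigr)\lesssim s^{-1}\|u\|_2^2$, and the two integrations by parts absorb the $A^{-2}$ prefactor. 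You instead stay with $\Delta^2\phi_A$ and pair $\|\Delta^2\phi_A\|_{L^1}$ against the pointwise bound $\|u_s\|_\infty\lesssim s^{-1/2}\|u\|_2$, which you derive from $\|G^s\|_{L^2}\lesssim s^{-1/2}$ in two dimensions. Both routes render the small-$s$ integral convergent; yours avoids the integration-by-parts step entirely and is arguably more elementary, while the paper's stays within $L^2$-based resolvent estimates and does not require knowing $\Delta^2\phi\in L^1$. A minor remark: your scaling $\Delta^2\phi_A=A^{-4}(\Delta^2\phi)(\cdot/A)$ corresponds to $\phi_A=\phi(\cdot/A)$, whereas the paper's proof (and its use of the lemma) effectively works with $\Delta^2\phi_A=A^{-2}(\Delta^2\phi)(\cdot/A)$; under the latter convention your argument gives precisely $A^{-1}$ rather than $A^{-3}$, matching the stated bound.
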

\begin{proof}
First, we recall that $\Delta\phi_A(x)=\Delta\phi(\frac{x}{A})$ and hence $\Delta^2\phi(x)=\frac{1}{A^2}\Delta^2(\frac{x}{A})$. Now we consider the following integral
\begin{align}\notag
\frac{1}{A^2}\int_{0}^{\infty}\int\sqrt{s}\Delta^2\phi(\frac{x}{A})|u_s|^2dxds=:I+II,
\end{align}
where
\begin{align}\notag
I=\frac{1}{A^2}\int_{0}^{\tau}\sqrt{s}\int\Delta^2\phi(\frac{x}{A})|u_s|^2dxds,\
II=\frac{1}{A^2}\int_{\tau}^{\infty}\sqrt{s}\int\Delta^2\phi(\frac{x}{A})|u_s|^2dxds.
\end{align}
Here $\tau>0$ is some given number. We can integral by parts twice and use the H\"{o}lder inequality to deduce that
\begin{align*}
|I|&\lesssim\|\Delta\phi\|_{\infty}\int_{0}^{\tau}\sqrt{s}(\|\Delta u_s\|_2\|u_s\|_2+\|\nabla u_s\|_2^2)ds\\
&\lesssim\int_{0}^{\tau}\sqrt{s}\left(\left\|\frac{-\Delta}{-\Delta+s}u\right\|_2
\left\|\frac{1}{-\Delta+s}u\right\|_2+\left\|\frac{\nabla}{-\Delta+s}u\right\|_2^2\right)ds\\
&\lesssim\int_0^{\tau}\frac{1}{s^{1/2}}ds\|u\|_2^2\lesssim\sqrt{\tau}\|u\|_2^2.
\end{align*}
where we use the bound $\|u_s\|_2\lesssim s^{-1}\|u\|_2$. To estimate $II$.
\begin{align}\notag
|II|\lesssim\frac{1}{A^2}\|\Delta^2\phi\|_{\infty}
\int_{\tau}^{+\infty}\frac{1}{s^{\frac{3}{2}}}ds\|u\|_2^2
\lesssim\frac{1}{A^2\sqrt{\tau}}\|u\|_2^2.
\end{align}
Thus, we have show that
\begin{align}\notag
\frac{1}{A^2}\int_{0}^{\infty}\int\sqrt{s}\Delta^2\phi(\frac{x}{A})|u_s|^2dxds
\lesssim\left(\sqrt{\tau}+\frac{1}{A^2\sqrt{\tau}}\right)\|u\|_2^2
\leq\frac{1}{A}\|u\|_2^2.
\end{align}
In the last step, we minimizing this bound with respect to $\tau$.
\end{proof}

\begin{lemma}\label{lemma-app-coercivity-estimate}
There exist a constant $C_1>0$ such that for all $\epsilon=\epsilon_1+i\epsilon_2\in H^{1/2}(\mathbb{R}^2)$, we have the coercivity estimate
\begin{align}\notag
(L_{+}\epsilon_1,\epsilon_1)+(L_{-}\epsilon_2,\epsilon_2)\geq C_1\int|\epsilon|^2-\frac{1}{C_1}\left\{(\epsilon_1,Q)^2+(\epsilon_1,S_{1,0})^2
+|(\epsilon_1,S_{0,1})|^2+|(\epsilon_2,\rho_1)|^2\right\}.
\end{align}
Here $S_{1,0}$ and $S_{0,1,j}$ are the unique functions such that $L_{-}S_{1,0}=\Lambda Q$ with $(S_{1,0},Q)=0$ and $L_{-}S_{0,1}=-\nabla Q$ with $(S_{0,1,j},Q)=0$, respectively, and the function $\rho_1$ is defined in \eqref{mod-definition-rho}.
\end{lemma}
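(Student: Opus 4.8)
The plan is to use the splitting $(L_+\epsilon_1,\epsilon_1)+(L_-\epsilon_2,\epsilon_2)$ and prove, separately, a coercivity estimate for $L_-$ modulo the single functional $\epsilon_2\mapsto(\epsilon_2,\rho_1)$ and a coercivity estimate for $L_+$ modulo the three functionals $\epsilon_1\mapsto(\epsilon_1,Q)$, $(\epsilon_1,S_{1,0})$, $(\epsilon_1,S_{0,1})$; adding the two then yields the lemma. The only genuinely spectral inputs I will borrow from \cite{frank-lenzmann2013,FrankLS2016} are: $L_-\ge0$ with $\ker L_-=\mathrm{span}\{Q\}$ and $L_-$ coercive on $\{Q\}^{\bot}$; $L_+$ has exactly one simple negative eigenvalue, $\ker L_+=\mathrm{span}\{\partial_{x_1}Q,\partial_{x_2}Q\}$, and essential spectrum $[1,\infty)$. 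Everything else is driven by the explicit relations $L_+\Lambda Q=-Q$, $L_+\nabla Q=0$, the $L^2$-critical identity $(\Lambda Q,Q)=0$, the definitions $L_-S_{1,0}=\Lambda Q$, $L_-S_{0,1}=-\nabla Q$, the symmetry structure recorded after Lemma \ref{lemma-3app}, and the positivity constants $e_1=\tfrac12(L_-S_{1,0},S_{1,0})>0$, $p_{1,j}=2(L_-S_{0,1,j},S_{0,1,j})>0$ from Lemma \ref{lemma-3app-2}.

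For the $L_-$ piece I would first record $(Q,\rho_1)\neq0$: since $L_+\rho_1=S_{1,0}$ and $L_+\Lambda Q=-Q$, one has $-(Q,\rho_1)=(L_+\Lambda Q,\rho_1)=(\Lambda Q,L_+\rho_1)=(\Lambda Q,S_{1,0})=(L_-S_{1,0},S_{1,0})=2e_1>0$. Then, decomposing any $\epsilon_2=\alpha Q+\tilde\epsilon_2$ with $\tilde\epsilon_2\bot Q$, the cross terms drop because $L_-Q=0$, so $(L_-\epsilon_2,\epsilon_2)=(L_-\tilde\epsilon_2,\tilde\epsilon_2)\ge c\|\tilde\epsilon_2\|_2^2$ by coercivity on $\{Q\}^{\bot}$; writing $(\epsilon_2,\rho_1)=\alpha(Q,\rho_1)+(\tilde\epsilon_2,\rho_1)$ and using $(Q,\rho_1)\neq0$ gives $\alpha^2\lesssim|(\epsilon_2,\rho_1)|^2+\|\tilde\epsilon_2\|_2^2$, hence $\|\epsilon_2\|_2^2\lesssim|(\epsilon_2,\rho_1)|^2+(L_-\epsilon_2,\epsilon_2)$, which rearranges to $(L_-\epsilon_2,\epsilon_2)\ge c\|\epsilon_2\|_2^2-\tfrac1c|(\epsilon_2,\rho_1)|^2$. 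This part is soft and needs no compactness.

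For the $L_+$ piece the starting point is the refined coercivity $(L_+\psi,\psi)\ge c\|\psi\|_{H^{1/2}}^2$ on $\{Q,\Lambda Q,\partial_{x_1}Q,\partial_{x_2}Q\}^{\bot}$. By the Weinstein-type criterion, $(L_+^{-1}Q,Q)=-(\Lambda Q,Q)=0$ forces $L_+\ge0$ on $\{Q\}^{\bot}$; if $(L_+\phi,\phi)=0$ with $\phi\bot Q$ then $L_+\phi\in\mathrm{span}\{Q\}$, so $\phi\in\mathrm{span}\{\Lambda Q,\nabla Q\}$, i.e.\ the form-null-space on $\{Q\}^{\bot}$ is exactly that three-dimensional space; positivity of the essential spectrum then upgrades $\ge0$ to strict coercivity on the triple complement. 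Given a general $\epsilon_1$, I would peel off the $Q$-component ($\epsilon_1=\alpha Q+\epsilon_1'$, $\epsilon_1'\bot Q$; the error is harmless because $L_+Q=-Q^2$ is bounded and decaying), then write $\epsilon_1'=\epsilon_1^0+c_1\Lambda Q+c_2\cdot\nabla Q$ with $\epsilon_1^0\bot\{Q,\Lambda Q,\nabla Q\}$. Using $L_+\nabla Q=0$, $(L_+\Lambda Q,\Lambda Q)=-(Q,\Lambda Q)=0$, $(L_+\Lambda Q,\nabla Q)=-(Q,\nabla Q)=0$ and $(L_+\epsilon_1^0,\Lambda Q)=(\epsilon_1^0,-Q)=0$, all cross and bad diagonal terms cancel, so $(L_+\epsilon_1',\epsilon_1')=(L_+\epsilon_1^0,\epsilon_1^0)\ge c\|\epsilon_1^0\|^2$. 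The coefficients are then controlled by the prescribed functionals: by the symmetry structure $(\Lambda Q,S_{0,1,j})=(\nabla Q,S_{1,0})=0$ and $(\partial_{x_k}Q,S_{0,1,j})=0$ for $k\neq j$, while the non-degeneracies $(\Lambda Q,S_{1,0})=2e_1\neq0$ and $(\partial_{x_j}Q,S_{0,1,j})=-\tfrac12 p_{1,j}\neq0$ give $|c_1|\lesssim|(\epsilon_1,S_{1,0})|+\|\epsilon_1^0\|$, $|c_2|\lesssim|(\epsilon_1,S_{0,1})|+\|\epsilon_1^0\|$ (here $(\epsilon_1',S_{1,0})=(\epsilon_1,S_{1,0})$, $(\epsilon_1',S_{0,1})=(\epsilon_1,S_{0,1})$ since $S_{1,0},S_{0,1}\bot Q$). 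A few applications of Young's inequality to absorb the $\|\epsilon_1^0\|$-terms produce $(L_+\epsilon_1,\epsilon_1)\ge c\|\epsilon_1\|_2^2-\tfrac1c\big[(\epsilon_1,Q)^2+(\epsilon_1,S_{1,0})^2+|(\epsilon_1,S_{0,1})|^2\big]$, and adding the $L_-$ estimate finishes the proof.

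The main obstacle I anticipate is entirely on the $L_+$ side: establishing the strict coercivity on $\{Q,\Lambda Q,\nabla Q\}^{\bot}$ from the null relation $L_+\Lambda Q=-Q$ (one must be careful to identify the form-null-space on $\{Q\}^{\bot}$ as exactly three-dimensional), and then bookkeeping the two-stage orthogonal decomposition so that orthogonality to $\{S_{1,0},S_{0,1}\}$ genuinely pins down the $\{\Lambda Q,\nabla Q\}$ directions via the constants $e_1$, $p_1$. If one prefers to avoid the Weinstein criterion, an alternative is a single contradiction/compactness argument applied to the full quadratic form: extract a nontrivial weak $L^2$-limit $\psi$ with $(L_+\psi_1,\psi_1)+(L_-\psi_2,\psi_2)\le0$ orthogonal to all four constraint functions and contradict the spectral picture through its Euler--Lagrange equation; I would keep this as a fallback.
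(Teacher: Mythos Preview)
Your plan is correct and complete: the $L_-$ part is immediate from $\ker L_-=\mathrm{span}\{Q\}$ together with the nondegeneracy $(Q,\rho_1)=-(L_-S_{1,0},S_{1,0})\neq0$, and the $L_+$ part goes through once one has coercivity on $\{Q,\Lambda Q,\nabla Q\}^\perp$ and the nondegeneracies $(\Lambda Q,S_{1,0})=2e_1\neq0$, $(\partial_jQ,S_{0,1,j})=-\tfrac12 p_{1,j}\neq0$. Your identification of the form-null space of $L_+$ on $\{Q\}^\perp$ as exactly $\mathrm{span}\{\Lambda Q,\nabla Q\}$ is right (it is the kernel of $PL_+P$, and for a nonnegative self-adjoint operator the zero set of the form equals the kernel), and the upgrade to strict coercivity then follows from the compactness argument you sketch as a fallback, using that $Q$ decays and $\sigma_{\mathrm{ess}}(L_+)=[1,\infty)$.

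The paper takes a slightly different route. Instead of invoking the Weinstein criterion to get coercivity of $L_+$ directly on $\{Q,\Lambda Q,\nabla Q\}^\perp$, it first writes down the coercivity that is immediate from the spectral picture, namely
\[
(L_+\epsilon_1,\epsilon_1)+(L_-\epsilon_2,\epsilon_2)\ge C\|\epsilon\|_{H^{1/2}}^2-\tfrac1C\big[(\epsilon_1,\varphi)^2+|(\epsilon_1,\nabla Q)|^2+(\epsilon_2,Q)^2\big],
\]
where $\varphi>0$ is the ground state of $L_+$; this needs no Weinstein step since $\varphi,\nabla Q,Q$ are genuine eigenfunctions. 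It then \emph{converts} the bad directions by subtracting $a\Lambda Q+c\cdot\nabla Q+ibQ$ from $\epsilon$ so as to kill $(\tilde\epsilon_1,\varphi),(\tilde\epsilon_1,\nabla Q),(\tilde\epsilon_2,Q)$, and reads the same coefficients $a,b,c$ from $(\tilde\epsilon_1,S_{1,0}),(\tilde\epsilon_1,S_{0,1}),(\tilde\epsilon_2,\rho_1)$ via the nondegeneracies $(\Lambda Q,\varphi)=-\tfrac1e(Q,\varphi)>0$, $(\Lambda Q,S_{1,0})>0$, $(\nabla Q,S_{0,1})\neq0$, $(Q,\rho_1)\neq0$. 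The advantage of the paper's version is that the initial spectral coercivity is for free; the advantage of your version is that it never introduces the auxiliary eigenfunction $\varphi$ and works entirely with the algebraic directions $\Lambda Q,\nabla Q,Q$ already present in the construction. Both arguments hinge on exactly the same nondegeneracy constants $e_1,p_1$ and on $L_+\Lambda Q=-Q$, $(\Lambda Q,Q)=0$.
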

\begin{proof}
From \cite{FrankLS2016} we recall that the key fact that the null space of $L_{+}$ and $L_{-}$ are given by
\begin{align}\label{eq.ke1}
\ker L_{+}=span\{\nabla Q\}=span\{\partial_1Q,\partial_2Q\},\ \ker L_{-}=span\{Q\}.
\end{align}
Moreover, $L_+$ has a unique negative eigenvalue, while $L_- \geq 0.$

If we consider the minimization problem
$$ \inf_{g \in H^{1/2}, g \perp Q} ( L_- g, g )_{L^2},$$
then we have two possibilities: or there exists $ g \neq 0, g \in H^{1/2}, g \perp Q$ such that
$$ L_- g =  a Q$$
or
\begin{equation}\label{eq.se1}
(L_- g,g ) \geq C \|g\|_{H^{1/2}}^2, \ \ \forall  g \perp Q.
\end{equation}
The first possibility leads easy to a contradiction. Indeed, multiplying by $Q$ we find $a=0$ and then we arrive at a contradiction with \eqref{eq.ke1}.
Therefore, remains \eqref{eq.se1} and this estimate implies
\begin{equation}\label{eq.se2}
   (L_- \epsilon_2,\epsilon_2 ) \geq C \|\epsilon_2\|_{H^{1/2}}^2 - \frac{1}{C} |(\epsilon_2,Q)|^2.
\end{equation}
In a similar way, we consider the minimization problem
$$ \inf_{g \in H^{1/2}, g \perp \varphi, g \perp \nabla Q} ( L_+ g, g )_{L^2},$$
and using the argument of sections 7.1 and 7.2 in deduce
\begin{equation}\label{eq.se3}
(L_+ g,g ) \geq C \|g\|_{H^{1/2}}^2, \ \ \forall  g \perp \varphi, g \perp \nabla Q .
\end{equation}
As before this estimate implies
\begin{equation}\label{eq.se4}
   (L_+ \epsilon_1,\epsilon_1 ) \geq C \|\epsilon_1\|_{H^{1/2}}^2 - \frac{1}{C} \left( |(\epsilon_1,\varphi)|^2+ |(\epsilon_1,\nabla Q)|^2 \right).
\end{equation}

{From \eqref{eq.se2} and \eqref{eq.se4} we find }

\begin{align}\label{app-c-4}
(L_{+}\epsilon_1,\epsilon_1)+(L_{-}\epsilon_2,\epsilon_2)\geq C_1\|\epsilon\|_{H^{1/2}}^2-\frac{1}{C_1}\{(\epsilon_1,\varphi)^2+|(\epsilon_1,\nabla Q)|^2+|(\epsilon_2,Q)|^2\}
\end{align}
for all $\epsilon=\epsilon_1+i\epsilon_2\in H^{1/2}(\mathbb{R}^2)$, where $C_1>0$ is some constant. Here $\varphi=\varphi(x)>0$ with $\|\varphi\|_2^2=1$ denotes the unique ground state eigenfunction of $L_{+}$, and we have $L_{+}\varphi=e\varphi$ with some $e<0$.

To derive the coercivity estimate in this lemma from an estimate of the form \eqref{app-c-4}, we can use some arguments that can be found, for example, in \cite{MerleR2006}. For the reader's convenience, we provide the details of the adaptation to our case. To prove the desired coercivity estimate, we can that assume $\epsilon=\epsilon_1+i\epsilon_2\in H^{1/2}(\mathbb{R}^2)$ satisfies
\begin{align}\notag
(\epsilon_1,S_{1,0})=(\epsilon_1,S_{0,1,j})=(\epsilon_2,\rho_1)=0.
\end{align}
and let the auxiliary function $\tilde{\epsilon}=\tilde{\epsilon_1}+i\tilde{\epsilon_2}$ satisfy
\begin{align}\notag
\tilde{\epsilon}=\epsilon-a\Lambda Q-ibQ-\sum_{j=1}^2c_j\partial_{x_j} Q,\ \text{for}\ j=1,2
\end{align}
where $a,b,c$ are chosen such that
\begin{align}\notag
(\tilde{\epsilon}_1,\varphi)=(\tilde{\epsilon}_2,Q)=(\tilde{\epsilon}_1,\partial_{x_j} Q)=0,\ \text{for}\ j=1,2.
\end{align}
That is
\begin{align}\notag
a=\frac{(\epsilon_1,\varphi)}{(\Lambda Q,\varphi)},\ b=\frac{(\epsilon_2,Q)}{(Q,Q)},\ c_j=\frac{(\epsilon_1,\partial_{x_j}Q)}{(\nabla Q,\nabla Q)}, \ \text{for}\ j=1,2
\end{align}
where we also used that $(\Lambda Q,\nabla Q)=0$ holds and $(\varphi,\nabla Q)=0$ since $\nabla Q\in\ker L_{+}$ and $\varphi\in ran L_{+}$. Next, recall that $L_{+}\varphi=e\varphi$ with $e<0$ and $L_{+}\Lambda Q=-Q$. Hence $(\Lambda Q,\varphi)=-\frac{1}{e}(Q,\varphi)>0$, by the strict positivity of $Q>0$ and $\varphi>0$. On the other hand, the orthogonality conditions satisfied by $\epsilon=\epsilon_1+i\epsilon_2$ imply that
\begin{align}\notag
a=\frac{(\tilde{\epsilon}_1,S_{1,0})}{(\Lambda Q,S_{1,0})},\ b=-\frac{\tilde{\epsilon}_2,\rho_1}{(Q,\rho_1)},\ c_j=-\frac{(\tilde{\epsilon}_1,S_{0,1,j})}{(\nabla Q,S_{0,1})},\,j=1,2,
\end{align}
where we use that $(\Lambda Q,S_{0,1})=(\nabla Q,S_{1,0})=0$, since $Q$ and $S_{1,0}$ are radial symmetry and $S_{0,1}$ is antisymmetry. Note that $L_{-}S_{1,0}=\Lambda Q$ and hence $(\Lambda Q,S_{1,0})=(L_{-}S_{1,0},S_{1,0})\neq0$, and $(\nabla Q,S_{0,1})=-(L_{-}S_{0,1},S_{0,1})<0$ because of $L_{-}S_{0,1}=-\nabla Q$. Furthermore, recall that $L_{+}\rho_1=S_{1,0}$. Thus $(Q,\rho_1)=-(\Lambda Q,S_{1,0})=(L_{-}S_{1,0},S_{1,0})>0$ again. In summary, we find that
\begin{align}\notag
\frac{1}{K}\|\epsilon\|_{H^{1/2}}\leq\|\tilde{\epsilon}\|_{H^{1/2}}\leq K\|\epsilon\|_{H^{1/2}},
\end{align}
with some constant $K>0$. Now, since $(\Lambda Q,Q)=(\nabla Q,Q)=0$ and $L_{+}\Lambda Q=-Q$ as well as $L_{+}\nabla Q=0$ and $L_{-}Q=0$, we obtain
\begin{align*}
&(\tilde{\epsilon}_1,Q)=(\epsilon_1,Q),\ (L_{-}\tilde{\epsilon}_2,\tilde{\epsilon}_2)=(L_{-}\epsilon_2,\epsilon_2)\\ &(L_{+}\tilde{\epsilon}_1,\tilde{\epsilon}_1)=(L_{+}\epsilon_1,\epsilon_1)+a(\epsilon_1,Q).
\end{align*}
By the previous relations and estimate \eqref{app-c-4}, we conclude
\begin{align*}\notag
(L_{+}\epsilon_1,\epsilon_1)+(L_{-}\epsilon_2,\epsilon_2)
=&(L_{+}\tilde{\epsilon_1},\tilde{\epsilon_1}+(L_{-}\tilde{\epsilon_2},\tilde{\epsilon_2}))-
a(\epsilon_1,Q)\\
\geq&C_1\|\tilde{\epsilon}\|_{H^{1/2}}^2-a(\epsilon_1,Q)\geq C_0\|\epsilon\|_{H^{1/2}}^2-\frac{1}{C_0}(\epsilon_1,Q)^2,
\end{align*}
with some sufficiently small constant $C_0>0$.
\end{proof}

\textbf{Acknowledgements}

The authors are grateful to the anonymous referee for the careful reading and valuable suggestions. YL was supported by the China Scholarship Council (201906180041).

\end{document}